\documentclass[a4paper,12pt]{amsart}

\pagestyle{myheadings}

\usepackage[latin1]{inputenc}
\usepackage[T1]{fontenc}
\usepackage[english]{babel}

\usepackage{mathrsfs}
\usepackage{amscd}
\usepackage{amsfonts}
\usepackage{amsmath}
\usepackage{amssymb}
\usepackage{amstext}
\usepackage{amsthm}
\usepackage{amsbsy}

\usepackage{xspace}
\usepackage[all]{xy}
\usepackage{graphicx}
\usepackage{url}
\usepackage{latexsym}

\makeatletter
\newcommand*{\rom}[1]{\expandafter\@slowromancap\romannumeral #1@}
\makeatother

\theoremstyle{definition}

\newtheorem{fact}{fact}

\newtheorem{thm}[fact]{Theorem}
\newtheorem{lemma}[fact]{Lemma}
\newtheorem{prop}[fact]{Proposition}
\newtheorem{corollary}[fact]{Corollary}
\newtheorem{defini}[fact]{Definition}
\newtheorem{rem}[fact]{Remark}

\title{The Lost Melody Phenomenon}
\author{Merlin Carl}

\begin{document}

\maketitle

\begin{abstract}
A typical phenomenon for machine models of transfinite computations is the existence of so-called lost melodies, i.e. real numbers $x$ such that
the characteristic function of the set $\{x\}$ is computable while $x$ itself is not (a real having the first property is called recognizable). This was first observed by J. D. Hamkins and A. Lewis for infinite 
time Turing machine (see \cite{HaLe}), then demonstrated by P. Koepke and the author
for $ITRM$s (see \cite{ITRM}). We prove that, for unresetting infinite time register machines introduced by P. Koepke in \cite{wITRM}, recognizability equals computability, i.e. the lost melody phenomenon does not occur.
Then, we give an overview on our results on the behaviour of recognizable reals for $ITRM$s as introduced in \cite{KoMi}. We show that there are no lost melodies for ordinal Turing machines ($OTM$s)
or ordinal register machines ($ORM$s) without parameters and that this is, under the assumption that $0^{\sharp}$ exists, independent from $ZFC$.
Then, we introduce the notions of resetting and unresetting $\alpha$-register machines and give some information on the question for which of these machines there are lost melodies.
\end{abstract}

\section{Introduction}

The research on machine models of transfinite computations began with the seminal Hamkins-Lewis paper \cite{HaLe} on Infinite Time Turing Machines ($ITTM$s). These machines, which are basically classical Turing
machines equipped with transfinite running time, have succesfully been applied to various areas of mathematics such as descriptive set theory (\cite{Co}, \cite{SeSc}) and model theory (\cite{HMSW}) and turned out to show a variety of fascinating
behaviour. A particularly interesting feature that has frequently played a role in applications is the existence of so-called lost melodies. A lost melody is a real number $x\subseteq\omega$ which is recognizable, i.e. for some $ITTM$-program $P$, 
the computation of
$P$ with $y$ on the input tape (which plays the role of a real oracle for an $ITTM$) is defined for all $y$ and outputs $1$ iff $y=x$ and otherwise outputs $0$, but not computable, i.e. no program computes 
the characteristic function of $x$. The existence of lost melodies for $ITTM$s was observed and proved
in \cite{HaLe}.

In the meantime, a rich variety of transfinite machine types have been defined, studied and related to each other: Koepke introduced Infinite Time Register Machines (see \cite{wITRM}), which were later relabeled 
as unresetting or weak Infinite Time Register Machines ($wITRM$s) when an enhanced version was considered in \cite{ITRM}. Further generalizations led to $\alpha$-Turing machines (\cite{KoSe1}) $\alpha$-$\beta$-Turing machines, 
transfinite $\lambda$-calculus (\cite{Se}),
the hypermachines of Friedman and Welch (basically $ITTM$s with a more complex limit behaviour, see \cite{FrWe}) and infinite time Blum-Shub-Smale-machines (\cite{KoSe2}). An arguably ultimate upper bound is set by Koepke's
ordinal register machines ($ORM$s) and ordinal Turing machines ($OTM$s), which, using ordinal parameters, can calculate the whole of G\"odel's constructible hierarchy $L$. (\cite{ICTT} contains an argument to the effect that $OTM$-computability
is indeed a conceptual analogue of Turing-computability in the transfinite.) For many of these machine types, the computational strength has been precisely determined.

In this paper, we are interested in the question how typical the existence of lost melodies is for models of transfinite computations. While it was shown in \cite{ITRM} that $ITRM$s, like $ITTM$s, have lost melodies, the question was to the best
of our knowledge not considered for any other of these machine types and has in particular been open concerning $wITRM$s. Specifically, we focus on machine models generalizing register machines: In section $1$, we prove that there are no lost melodies for
unresetting $ITRM$s, we summarize (mostly leaving out or merely sketching proofs) in section $2$ some of our earlier results on $ITRM$-recognizability obtained in \cite{Ca} and \cite{Ca2} and proceed in section $3$ to show that there are again no lost melodies for 
ordinal register- and Turing machines, without ordinal
parameters and that the answer for ordinal machines with parameters is undecidable under a certain set-theoretical assumption.
 Then, for the parameter-free case, we interpolate between these extrem cases by introducing resetting and unresetting $\alpha$-register machines and show that for resetting $\alpha$-register machines, lost melodies always exist. For unresetting $\alpha$-register machines,
the picture is quite different: It turns out that, while there are no lost melodies for $\alpha=\omega$, there exist countable values of $\alpha$ for which there are, but their supremum is countable, so that from some $\gamma<\omega_1$ on, 
lost melodies for unresetting $\alpha$-register machines cease to exist.

Let us now introduce the relevant machine types, the resetting and unresetting $\alpha$-register machines. (The unresetting version was originally suggested in the final paragraph of \cite{wITRM}.)
An $\alpha$-register machine has finitely many registers, each of which can store a single ordinal $<\alpha$.
The instructions for an $\alpha$-register machine (also simply called $\alpha$-machine) are the same as for
the unlimited register machines of \cite{Cu}: the increasing of a register content by $1$, copying a register content to another register, reading out the $r_{i}$th bit of an oracle (where $r_{i}$
is the content of the $i$th register), jumping to a certain program line provided a certain register 
content is $0$, and stopping. Programs for $\alpha$-register machines are finite sequences of instructions, as usual. The running time of an $\alpha$-machine is the class of ordinals. At successor times, computations proceed as for the classical
model of unlimited register machines, introduced in \cite{Cu}.
It remains to fix what to do at a limit time $\lambda$. We consider three possibilites, where $Z_{\iota}$ denotes the active program line at time $\iota$ and $R_{i\iota}$ denotes the content of the $i$th register at time $\iota$:

\begin{itemize}
 \item $Z_{\lambda}$ and $R_{i\lambda}$ are undefined. Setting $\lambda=\omega$, this would just be a classical $URM$
 \item $Z_{\lambda}:=\liminf_{\iota<\lambda}Z_{\iota}$, $R_{i\iota}=\liminf_{\iota<\lambda}R_{i\iota}$, if the latter is $<\alpha$ and otherwise, the computation is undefined. Setting $\alpha=\omega$, these are
the unresetting or weak infinite time register machines introduced in \cite{wITRM}.\footnote{In the cited paper, these machines are just called infinite time register machines, without further qualification. Later on, when resetting
infinite time register machines were introduced, the terminology was changed.} We call these unresetting or weak $\alpha$-machines.
\item $Z_{\lambda}:=\liminf_{\iota<\lambda}Z_{\iota}$, $R_{i\iota}=\liminf_{\iota<\lambda}R_{i\iota}$, if the latter is $<\alpha$ and otherwise $R_{i\lambda}=0$. Setting $\alpha=\omega$, these are the infinite time register machines ($ITRM$s)
of \cite{KoMi}. We call these resetting or strong $\alpha$-machines.
\end{itemize}

Most of our notation and terminology is standard. $KP$ is Kripke-Platek set theory (see e.g. \cite{Sa}), $ZF^{-}$ is Zermelo-Fraenkel set theory without the power set axioms in the version described in \cite{GHJ}.
If $P$ is a program and $x$ a real, then $P^{x}\downarrow$ means that $P$, when run in the oracle $x$, stops, while $P^{x}\uparrow$ means that $P$ in the oracle $x$ diverges. $P^{x}\downarrow=y$
means that $P^{x}(i)\downarrow$ for all $i\in\omega$ and that in the final state of $P^{x}(j)$, the first register contains $1$ iff $j\in y$ and otherwise $0$.
We write $x\leq_{h}y$ for hyperarithmetic reducibility, i.e. for $x\in L_{\omega_{1}^{CK,y}}[y]$. $On$ denotes the class of ordinals, small greek letters denote ordinals unless stated otherwise. $p:On\times On\rightarrow On$
is Cantor's pairing function.

It turns out (see \cite{KoMi}) that unresetting $\omega$-machines are much weaker than their resetting analogue; in particular, resetting $\omega$-machines can compute all finite iterations of the halting problem for unresetting $\omega$-machines.
We fix the following general definitions:

\begin{defini}
 Let $P$ be program of any of the machine types described above, and let $x$ be a real. We say that $P$ recognizes $x$ iff $P^{x}\downarrow=1$ and $P^{y}\downarrow=0$ for all $y\neq x$.
We say that $x$ is recognizable by an (un)resetting $\alpha$-machine iff there is a program $P$ for such a machine that recognizes $x$. When the machine type is clear from the context, 
we merely state that $x$ is recognizable.
\end{defini}

\section{Weak ITRMs}

\begin{prop}{\label{trivialdirection}}
 Let $x$ be $wITRM$-computable. Then $x$ is $wITRM$-recognizable.
\end{prop}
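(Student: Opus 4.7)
Let $Q$ be a $wITRM$-program witnessing $Q^{0}\downarrow=x$; thus $Q$ on input $n$ in the empty oracle halts with the bit $x(n)$ in the first register. The plan is to build the recognizer $P$ by the standard bit-by-bit comparison: on oracle $y$, iterate through positions $n=0,1,2,\ldots$, at each $n$ simulate $Q$ on input $n$ to produce $x(n)$, read $y(n)$ via the oracle-query instruction of the machine, and compare the two bits. If a disagreement is ever detected, halt with output $0$; if no disagreement is ever found, halt with output $1$.

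The branch where $y\neq x$ is easy: at the first differing bit, which occurs at some finite position $n_{0}$, a disagreement is found and $P$ halts with $0$. The total runtime in this case is bounded by the sum of the halting times of $Q$ on inputs up to $n_{0}$, which is a countable ordinal. The case $y=x$ is the only one requiring real care: the recognizer must halt with output $1$ after an ordinal-length iteration. The $wITRM$-constraint $R_{i\lambda}=\liminf_{\iota<\lambda}R_{i\iota}<\alpha$ rules out a naive incrementing counter, since such a counter would reach $\omega=\alpha$ at the first limit and render the computation undefined rather than leaving it in a halting state.

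The main obstacle is thus the precise design of the scan so that the $\liminf$-semantics at limit stages produces a halting configuration with output $1$ when $y=x$. The plan is to arrange the loop so that a ``disagreement'' register $D$ stays at $0$ throughout the $y=x$ case, hence its liminf at any limit is $0$; meanwhile, the program state $Z_{\lambda}$ at the relevant limit stage, by $Z_{\lambda}=\liminf_{\iota<\lambda}Z_{\iota}$, lands on a designated exit line at which $P$ reads $D$ and halts outputting $1$. Once this limit bookkeeping is in place, correctness is immediate from the halting of each $Q(n)$, the oracle-read mechanism of the $wITRM$, and the bitwise comparison, and no further ideas beyond the routine simulation of $Q$ inside $P$ are needed.
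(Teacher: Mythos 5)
Your high-level plan is the same as the paper's (bitwise comparison of the computed real against the oracle), and you correctly isolate the one genuinely delicate point: under the $\liminf$-semantics a naively incrementing position counter has $\liminf$ equal to $\omega$ at the first limit, which makes the computation undefined. But having named the obstacle, you do not actually overcome it, and the exit mechanism you sketch is broken. Two concrete problems. First, you never say what replaces the naive counter: the program still has to know which bit $n$ it is currently comparing, so some register must hold $n$, and you give no device for keeping its $\liminf$ below $\omega$. The paper's trick is to reset every working register (including the counter) to $0$ once in each iteration before restoring the needed values, so that each such register takes the value $0$ cofinally often and its $\liminf$ at the limit is $0$ rather than $\omega$. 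Second, your proposed exit test does not distinguish the limit stage from the intermediate stages: in the case $y=x$ the disagreement register $D$ contains $0$ at \emph{every} stage, and the designated exit line, being the $\liminf$ of the active lines, is necessarily a line visited cofinally often before the limit. So a line that ``reads $D$ and halts outputting $1$'' would already fire after the very first iteration and accept every oracle whose bit $0$ agrees with $x(0)$. You need a condition that holds exactly at limit stages; the paper arranges this with two flag registers initialized to $0$ and $1$ and swapped once per iteration, so that exactly one of them is $1$ throughout the run but both have $\liminf$ equal to $0$ at the limit --- the configuration ``both flags zero (and the counter zero)'' is then the signal to halt with output $1$. Without these two devices the ``limit bookkeeping'' you defer to is precisely the content of the proof, so as it stands the argument is incomplete.
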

\begin{proof}
Let $P$ be a $wITRM$-program that computes $x$. The idea is to compare $x$ to the oracle bitwise. A bit of care is necessary to arrange this comparison without
overflowing registers. Use a separate counting register $R$ and two flag registers $R^{flag}_{1}$ and $R^{flag}_{2}$. Initially, $R$ and $R^{flag}_{1}$ contain $0$ and $R^{flag}_{2}$ contains $1$. 
In a computation step, when $R$ contains $i$, compute the $i$th
bit of $x$ and compare it to the oracle. If these bits disagree, we stop with output $0$. Otherwise, we successively set all registers but $R^{flag}_{1}$ and $R^{flag}_{2}$ to $0$ once and then
set the content of $R$ to $i+1$ (after a register has been set to $0$, it may be used to store $i$ for this purpose) and swap the contents of $R^{flag}_{1}$ and $R^{flag}_{2}$.
In this way, if the number in the oracle is $x$, then a state will eventually occur in which $R^{flag}_{1}$ and $R^{flag}_{2}$ both contain $0$ and $R$ contains $0$, in which case we
output $1$.
\end{proof}

\begin{defini}
Let us denote by $wRECOG$ the set of reals recognizable by a weak $ITRM$ and by $wCOMP$ the set of reals computable by a weak $ITRM$.
\end{defini}

The following is the relativized version of Theorem $1$ of \cite{wITRM}:

\begin{thm}{\label{relwITRM}}
 Let $x,y\subseteq\omega$. Then $x$ is $wITRM$-computable in the oracle $y$ iff $x\in L_{\omega_{1}^{CK,y}}[y]$. In particular, $x$ is $wITRM$-computable iff $x\in L_{\omega_{1}^{CK}}$ iff
$x$ is hyperarithmetic.
\end{thm}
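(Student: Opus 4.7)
The plan is to relativize the proof of Theorem $1$ of \cite{wITRM} and establish the two implications separately. The ``in particular'' clause then follows by specializing to $y=\emptyset$ and invoking the standard characterization of the hyperarithmetic reals as those lying in $L_{\omega_{1}^{CK}}$.

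For the forward direction, let $P$ be a program with $P^{y}\downarrow=x$. The crucial step is to show that the halting time of $P^{y}$ is an ordinal strictly below $\omega_{1}^{CK,y}$. A $wITRM$ snapshot consists of an active program line together with finitely many natural-number register contents, so the configuration space is countable. If $P^{y}$ ran for at least $\omega_{1}^{CK,y}$ many steps, then a pigeonhole argument inside the admissible structure $L_{\omega_{1}^{CK,y}}[y]$ (using that the configuration sequence is $\Delta_{1}$-definable there) would force the computation to enter a recurrence pattern which, by the $\liminf$-rule, is self-perpetuating; hence the computation could not halt. Once the halting time bound is secured, the whole computation of $P^{y}$ is $\Delta_{1}$-definable over $L_{\omega_{1}^{CK,y}}[y]$ and its output $x$ therefore lies in this model.

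For the converse, fix $x\in L_{\omega_{1}^{CK,y}}[y]$ and choose $\alpha<\omega_{1}^{CK,y}$ with $x\in L_{\alpha}[y]$. By definition of the relativized Church--Kleene ordinal there is a $y$-recursive well-ordering $W\subseteq\omega\times\omega$ of order type at least $\alpha$. I would write a $wITRM$ program which, with oracle $y$, traces $W$ step by step and in parallel builds up a coded version of $L_{\beta}[y]$ for each $\beta\le\alpha$ on the scratch space; at limit stages the $\liminf$-rule can be harnessed to synchronize the construction, provided all counters are driven in the ``flag-swapping'' style of Proposition \ref{trivialdirection} so that every register has a definite natural-number $\liminf$. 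Once a code for $L_{\alpha}[y]$ has been produced, the program locates $x$ inside it using $y$ and writes out its characteristic function bit by bit.

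The principal obstacle in both directions is the disciplined handling of the liminf limit rule under the non-resetting semantics. In the forward direction one has to verify that the recurrence pattern argument really closes off: one must check that every configuration appearing cofinally often below $\omega_{1}^{CK,y}$ is forced by $\liminf$ to reappear after $\omega_{1}^{CK,y}$, so no halting configuration can first arise past this bound. In the converse direction, one must ensure that the ordinal recursion along $W$ can be implemented on finitely many registers that store only natural numbers and never overflow, which is the analogue, inside the transfinite recursion, of the overflow-avoidance already carried out in Proposition \ref{trivialdirection}. Beyond these points, the argument is essentially the unrelativized proof of \cite{wITRM} carried out over $L[y]$ in place of $L$.
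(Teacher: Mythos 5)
Your proposal takes essentially the same route as the paper, which for this theorem simply remarks that the proof of Theorem~$1$ of \cite{wITRM} relativizes and omits all details. Your elaboration of that relativization --- bounding halting times below $\omega_{1}^{CK,y}$ via the cycling/recurrence criterion and absoluteness over $L_{\omega_{1}^{CK,y}}[y]$ for one direction, and an overflow-disciplined effective transfinite recursion along a $y$-recursive well-ordering for the other --- is a correct outline of exactly that argument.
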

\begin{proof}
The proof given in \cite{ITRM} relativizes. We omit the proof to avoid what would amount to a mere repition of that proof.
\end{proof}

\begin{lemma}{\label{relbar}}
Let $x\subseteq\omega$ and let $M\models KP$ be such that $\omega^{M}=\omega$ and $x\in M$. Then $\omega_{1}^{CK,x}$ is an initial segment of $On^{M}$.
\end{lemma}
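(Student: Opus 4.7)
The plan is to show that every ordinal $\alpha < \omega_1^{CK,x}$ is an element of $On^M$; since the lemma is phrased with $\omega^M=\omega$ literally, one may assume $M$ is transitive (or pass to its Mostowski collapse), so its ordinals are comparable with the true ordinals, and this will yield the initial-segment statement.

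The starting input will be the standard characterization of the relativized Church--Kleene ordinal: every $\alpha<\omega_1^{CK,x}$ is the order type of some $x$-recursive well-ordering $R\subseteq\omega\times\omega$. I would fix a Turing program $e$ such that $e$ with oracle $x$ decides membership in $R$. The simulation of classical Turing computations on integer inputs is $\Delta_0$-definable (in fact primitive recursive) in $x$, so the relation $R$ is $\Delta_1$ in $x$ over $\omega$. Because $\omega, x\in M$ and KP proves $\Delta_1$-separation, this puts $R\in M$.

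Next I would use that $R$ is genuinely a well-ordering in $V$ and that well-foundedness of a set relation is downward absolute to transitive models, so $M$ also regards $R$ as a well-ordering of $\omega$. In KP one can perform $\Sigma_1$-recursion along any well-ordering to build its Mostowski collapse; carrying this out inside $M$ produces an ordinal $\beta\in On^M$ realized as the order type of $R$ in $M$. By absoluteness of the recursion---its defining clauses are $\Delta_0$ and the input data $\omega, R$ agree between $M$ and $V$---we obtain $\beta=\alpha$, hence $\alpha\in On^M$, which is what was required.

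The main obstacle I expect is the careful handling of the two absoluteness facts just used: (i) that Turing machine computations relative to $x$ produce sets that actually lie inside $M$, and (ii) that order types computed inside $M$ coincide with the true order types. Both are standard for $\omega$-standard transitive models of KP, resting respectively on $\Delta_1$-separation and on the absoluteness of $\Sigma_1$-recursion along a well-ordering; the real content of the argument is the assembly of these KP-ingredients rather than any new construction.
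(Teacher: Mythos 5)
Your opening move is where the argument goes wrong: you may not assume $M$ is transitive, nor pass to a Mostowski collapse. The hypothesis is only that $M$ is an $\omega$-model of $KP$, and in the application (Theorem \ref{wsigma1}, via Lemma \ref{kpsigma1}) $M$ ranges over arbitrary countable $\omega$-models coded by reals; these are in general ill-founded, and an ill-founded model has no Mostowski collapse. The entire content of the lemma is that $\omega_{1}^{CK,x}$ sits inside the \emph{well-founded part} of such a possibly ill-founded $M$; assuming transitivity assumes away exactly the case that matters. Your later steps can in fact be salvaged for ill-founded $\omega$-models ($R\in M$ by absoluteness of computations relative to $x$ over the standard $\omega^{M}=\omega$; $M$ believes $R$ is a well-ordering because an $M$-internal set with no $R$-least element would trace to a genuine one; and an $M$-internal order-isomorphism of some $\beta\in On^{M}$ with $(\omega,R)$ forces $\beta$ to lie in the well-founded part with true order type $\alpha$), but none of this is said, and as written the proof does not apply to the models the lemma is actually about.

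The second soft spot is the phrase ``$\Sigma_{1}$-recursion along any well-ordering to build its Mostowski collapse''. Recursion \emph{along} an internal well-founded relation, with a guarantee that the resulting collapse is total, is the axiom Beta, which $KP$ does not prove: the class of points where the recursion is undefined is $\Pi_{1}$ and need not be a set, so one cannot apply ``every nonempty subset of the field has an $R$-least element'' to it. The fact you need --- that every well-ordering in a $KP$-model is isomorphic, \emph{inside the model}, to one of the model's ordinals --- is true, but its proof runs the recursion in the opposite direction: define $F$ on $On^{M}$ by letting $F(\beta)$ be the $R$-least element of $\omega$ not in $\text{rng}(F|\beta)$ (legitimate $\Sigma$-recursion on ordinals), and then use $\Sigma_{1}$-replacement to show that $F$ must exhaust $\omega$ by some set stage $\gamma\in On^{M}$, since otherwise $F^{-1}$ would be a $\Sigma_{1}$ map from a subset of $\omega$ cofinally into $On^{M}$. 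This is precisely the argument the paper gives; your write-up treats this step as a routine black box and thereby skips the one point where $KP$ (as opposed to $ZF^{-}$) makes the lemma delicate.
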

\begin{proof}
If $x\in M$, then, as $M\models KP$, we have $z\in M$ for every $z$ which is recursive in $x$.

Now let $x\in M$. We have to show that every $\alpha<\omega_{1}^{CK,x}$ belongs to the well-founded part of $M$.
Since $M\models KP$, $M$ satisfies the recursion theorem for $\Sigma_{1}$-definitions. Let $z\subset\omega\times\omega$ be such that
$(\omega,z)$ is a well-ordering. For all $\beta\in On$, we define, by $\Sigma_{1}$-recursion, a function $F$ via $F(\beta)=\text{sup}_{z}\{F(\gamma)+1|\gamma<\beta\}$ if
this supremum exists, and otherwise $F(\beta)=\omega$.

We show that $\text{rng}(F)=\omega$: Otherwise, we have $\text{rng}(F)\subsetneq\omega$ and $\text{rng}(F)$ is closed under $z$-predecessors. Since $(\omega,z)$ is a well-ordering in $V$, $\text{rng}(F)$ must have a $z$-supremum $n\in\omega$.
Hence\\ $\text{rng}(F)=\{m\in\omega|(m,n)\in z\}\in M$: By the injectivity of $F$, $F^{-1}$ is $\Sigma_{1}$-definable. By $\Sigma_{1}$-replacement, $\text{rng}(F^{-1})$ is a set, hence an ordinal $\gamma$.
Consequently, we have $\omega\subseteq \text{rng}(F)$. We now show that $\text{rng}(F|\gamma)=\omega$ for some $\gamma\in On\cap M$. Suppose that $\omega\notin \text{rng}(F)$. Then $F$ is injective and $F^{-1}:\omega\rightarrow On$
is a function, contradicting $\Sigma_{1}$-replacement (as $On\cap M$ is not a set in $M$). Relativizing this argument to $x$, we obtain the desired result.
\end{proof}

\begin{lemma}{\label{cyclecrit}}
Let $P$ be a $wITRM$-program, and let $x\subseteq\omega$. Then $P^{x}\uparrow$ iff there exist $\sigma<\tau<\omega_{1}^{CK,x}$ such that $Z(\tau)=Z(\sigma)$, $R_{i}(\tau)=R_{i}(\sigma)$ for all $i\in\omega$
and $R_{i}(\gamma)\geq R_{i}(\sigma)$ for all $i\in\omega$, $\sigma<\gamma<\tau$. (Here, $Z(\gamma)$ and $R_{i}(\gamma)$ denote the active program line and the content of register $i$ at time $\gamma$.)
\end{lemma}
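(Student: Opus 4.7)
The plan is to prove both directions separately. For the $(\Leftarrow)$ direction, assume $\sigma<\tau$ satisfy the listed conditions and put $\delta=\tau-\sigma$. I would argue by transfinite induction on $\xi$ that $(Z(\sigma+\xi),R_{0}(\sigma+\xi),R_{1}(\sigma+\xi),\ldots)$ equals $(Z(\tau+\xi),R_{0}(\tau+\xi),R_{1}(\tau+\xi),\ldots)$ and both are defined. The base case is the hypothesis; the successor step uses determinism of $wITRM$ transitions. At a limit $\xi$, the induction hypothesis equates the two tails on $\eta<\xi$, so the corresponding liminfs coincide. To see these liminfs stay $<\omega$, one exploits the hypothesis $R_{i}(\gamma)\geq R_{i}(\sigma)$ on $(\sigma,\tau)$ together with $R_{i}(\tau)=R_{i}(\sigma)$: if $\tau$ is a limit, the value $R_{i}(\sigma)$ is attained cofinally in $[\sigma,\tau)$ by the very definition of $R_{i}(\tau)$ as the liminf; if $\tau$ is a successor, the finite stretch near $\tau$ forces the same conclusion via the transition at $\tau-1$. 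By periodicity this propagates to further periods, so every limit value equals some $R_{i}(\sigma)<\omega$. The computation therefore never halts, giving $P^{x}\uparrow$.

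For the $(\Rightarrow)$ direction, assume $P^{x}\uparrow$; then the state is defined and non-halting at every $\tau\in On$. By (the proof of) Theorem~\ref{relwITRM}, the computation of $P$ on $x$ is $\Sigma_{1}$-definable over $L_{\omega_{1}^{CK,x}}[x]$, so the initial segment of the state function on $\omega_{1}^{CK,x}$ lies in $L_{\omega_{1}^{CK,x}}[x]$. Put $L_{i}^{*}:=R_{i}(\omega_{1}^{CK,x})=\liminf_{\tau<\omega_{1}^{CK,x}}R_{i}(\tau)$ and $z^{*}:=Z(\omega_{1}^{CK,x})$; these are finite because $P^{x}$ does not crash. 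By admissibility there is some $\sigma^{\#}<\omega_{1}^{CK,x}$ such that for every $\tau\in[\sigma^{\#},\omega_{1}^{CK,x})$ one has $R_{i}(\tau)\geq L_{i}^{*}$ and $Z(\tau)\geq z^{*}$, while each of the sets $B_{i}=\{\tau<\omega_{1}^{CK,x}:R_{i}(\tau)=L_{i}^{*}\}$ and $C=\{\tau<\omega_{1}^{CK,x}:Z(\tau)=z^{*}\}$ is unbounded in $\omega_{1}^{CK,x}$, by the liminf characterisation.

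The key step is then to produce, above any $\alpha<\omega_{1}^{CK,x}$, a common limit point $\lambda<\omega_{1}^{CK,x}$ of all the sets $B_{0},\ldots,B_{k-1},C$ (where $k$ is the number of registers used by $P$). I would do this by $\Sigma_{1}$-recursion inside $L_{\omega_{1}^{CK,x}}[x]$: starting above $\alpha$, cyclically pick $\lambda_{n}$ to be the least element of the next $B_{i}$ (or $C$) strictly above $\lambda_{n-1}$. By $\Sigma_{1}$-replacement (admissibility) the image of this $\omega$-sequence is a set in $L_{\omega_{1}^{CK,x}}[x]$, hence $\lambda:=\sup_{n}\lambda_{n}<\omega_{1}^{CK,x}$, and $\lambda$ is a common limit point; therefore $R_{i}(\lambda)=L_{i}^{*}$ for all $i$ and $Z(\lambda)=z^{*}$. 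The set of such $\lambda$ is thus unbounded in $\omega_{1}^{CK,x}$. Choosing two $\lambda_{1}<\lambda_{2}$ from this set, both above $\sigma^{\#}$, and setting $\sigma=\lambda_{1}$, $\tau=\lambda_{2}$ yields the desired cycle: $Z(\sigma)=Z(\tau)=z^{*}$, $R_{i}(\sigma)=R_{i}(\tau)=L_{i}^{*}$ for all $i$, and $R_{i}(\gamma)\geq L_{i}^{*}=R_{i}(\sigma)$ for $\gamma\in(\sigma,\tau)$ since $\gamma\geq\sigma^{\#}$.

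The main obstacle, I expect, is the simultaneous control of the $B_{i}$'s and of $C$ in the forward direction: each is individually unbounded in $\omega_{1}^{CK,x}$, yet nothing guarantees a priori that they share limit points below $\omega_{1}^{CK,x}$, and their pointwise intersection can in principle be bounded there. The admissibility-driven construction of a common limit point bypasses this, and its correctness rests squarely on the computation being $\Sigma_{1}$-definable over $L_{\omega_{1}^{CK,x}}[x]$ so that $\Sigma_{1}$-replacement applies to the recursion defining the $\lambda_{n}$.
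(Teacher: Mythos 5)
Your forward direction ($P^{x}\uparrow$ implies the existence of the cycle) is essentially the right argument: the computation restricted to $\omega_{1}^{CK,x}$ is $\Delta_{1}$ over $L_{\omega_{1}^{CK,x}}[x]$, the sets on which each register (and the program line) attains its liminf are unbounded, and the interleaved $\omega$-recursion plus $\Sigma_{1}$-replacement yields unboundedly many common limit points, any two of which above your $\sigma^{\#}$ witness the cycle. Note that this argument even delivers $Z(\gamma)\geq Z(\sigma)$ on $(\sigma,\tau)$, a point that will matter below. One caveat: your claim that the $L_{i}^{*}$ are finite ``because $P^{x}$ does not crash'' silently fixes a convention; a computation that diverges by register overflow at some limit need not exhibit a cycle before that limit, so you should say how overflow is being classified. (The paper itself offers no proof to compare against -- it only cites Lemma 3 of \cite{KoMi}.)

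The backward direction has a genuine gap. From $\mathrm{state}(\sigma+\xi)=\mathrm{state}(\tau+\xi)$ for all $\xi$ you conclude that the computation never halts, but left ordinal addition absorbs: writing $\sigma+\delta=\tau$, one has $\sigma+\xi=\tau+\xi$ for all $\xi\geq\delta\cdot\omega$, so your lockstep property says nothing about what happens from time $\sigma+\delta\cdot\omega$ onwards and does not forbid halting there. The missing ingredient is control of the program line at limits of whole periods: $Z(\sigma+\delta\cdot\omega)$ is the \emph{least} line visited during $[\sigma,\tau)$, and the stated hypotheses do not force this to be $Z(\sigma)$. This is not merely presentational -- the implication is false without a program-line hypothesis. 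Consider the program with lines $0$: if $R_{0}=0$ goto $4$; $\ 1$: stop; $\ 2$: if $R_{1}=0$ goto $1$; $\ 3$: $R_{1}:=R_{0}$; $\ 4$: $R_{1}:=R_{1}+1$; $\ 5$: if $R_{0}=0$ goto $2$. Started with empty registers it cycles through lines $4,5,2,3$, with $R_{1}$ equal to $0$ at each visit to line $4$ and equal to $1$ in between; two consecutive visits to line $4$ satisfy all of your hypotheses ($Z$ and all $R_{i}$ repeat, no register dips below its value at $\sigma$), yet at time $\omega$ the machine is at line $2$ (the liminf of the program lines) with $R_{1}=0$ (the liminf of $R_{1}$) and promptly halts. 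So the cycle criterion needs the additional clause $Z(\gamma)\geq Z(\sigma)$ for $\sigma<\gamma<\tau$ -- exactly the clause the paper builds into the formula $NC_{P}$ in the proof of Theorem \ref{wsigma1}. With it, the correct induction shows $\mathrm{state}(\sigma+\delta\cdot\eta+\rho)=\mathrm{state}(\sigma+\rho)$ for all $\eta$ and all $\rho<\delta$, the limit case using both the register and the program-line conditions to identify the liminf configuration with the configuration at $\sigma$; then every stage $\geq\sigma$ replays a non-halting stage from $[\sigma,\tau)$, which is what actually yields $P^{x}\uparrow$.
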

\begin{proof}
This is an easy adaption of Lemma $3$ of \cite{KoMi}.
\end{proof}

The following lemma allows us to quantify over countable $\omega$-models of $KP$ by quantifying over reals:

\begin{lemma}{\label{kpsigma1}}
 There is a $\Sigma_{1}^{1}$-statement $\phi(v)$ such that $\phi(x)$ holds only if $x$ codes an $\omega$-model of $KP$ and such that, for any countable $\omega$-model $M$ of $KP$, there is a code $c$ for $M$
such that $\phi(c)$ holds.
\end{lemma}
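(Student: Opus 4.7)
The plan is to write $\phi(x)$ essentially as the conjunction of three clauses: (i) $x$ codes a binary relation $E_x$ on $\omega$, (ii) the structure $(\omega, E_x)$ satisfies every axiom of $KP$, and (iii) the natural numbers of this structure (as defined by the infinity axiom inside the coded model) are isomorphic to standard $\omega$. Clauses (i) and (ii) will be arithmetic in $x$, and clause (iii) will contribute the one function quantifier needed, yielding an overall $\Sigma^1_1$ formula.

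For clause (ii), I would observe that $KP$ has a recursive axiomatization: extensionality, pair, union, infinity, foundation, $\Delta_0$-separation, and $\Sigma_1$-collection, where the last three expand as recursive schemes. Since Tarskian satisfaction for a structure coded by a real $x$ and a formula coded by a natural number $n$ is arithmetic uniformly in $x$ and $n$, the statement ``for every $n$ in the recursive set of $KP$-axioms, $(\omega,E_x)\models n$'' is $\Pi^0_1$ in $x$, hence arithmetic.

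For clause (iii), I would use the fact that, inside any model of $KP$, there is a distinguished element $\omega^M$ given by the infinity axiom together with $\Delta_0$-separation, and its elements are generated in the model by $0$ and successor. I assert that there exists a real $f\colon\omega\to\omega$ such that: $f(0)$ is the $E_x$-least element of $\omega^M$; for each $n$, $f(n+1)$ is the $E_x$-successor of $f(n)$ inside the coded model; and every $m\in\omega$ with $m\,E_x\,\omega^M$ equals $f(n)$ for some $n$. All the clauses inside the existential are arithmetic in $x$ and $f$, so prefacing by ``$\exists f\in\omega^\omega$'' keeps the whole formula $\Sigma^1_1$. If such an $f$ exists, then the natural numbers of the coded model are order-isomorphic to $\omega$, so $(\omega,E_x)$ is an $\omega$-model; conversely, given any countable $\omega$-model $M\models KP$, pick a bijection of $\omega$ with $M$ and pull $\in^M$ back to obtain a code $c$ for which the identification of $\omega^M$ with $\omega$ provides a witnessing $f$, so $\phi(c)$ holds.

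The only subtle point, rather than an obstacle, is making sure clause (iii) genuinely forces $\omega^M$ to be standard without accidentally requiring well-foundedness of the whole of $\mathrm{On}^M$, which would cost us a $\Pi^1_1$ clause and ruin the complexity. The trick above isolates the standardness of $\omega^M$ via a single function quantifier, which is exactly what $\Sigma^1_1$ allows. With $\phi$ so defined, both required properties follow immediately from the construction.
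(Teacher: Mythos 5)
Your overall architecture matches the paper's: both proofs express ``$x$ codes a model of $KP$'' together with a device forcing the coded $\omega$ to be standard, and both rely on the fact that one $\Sigma^1_1$ quantifier suffices to absorb the non-arithmetic part. Your clause (iii) is correct and plays exactly the role of the paper's conditions $P_k$ ($k\in\omega+1$), which hard-wire the standard naturals into the coding (integer $i$ represented by $2i$, $\omega$ by $1$); your explicit function quantifier is a perfectly good alternative and, as you note, carefully avoids demanding well-foundedness of all of $On^M$.

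There is, however, a genuine error in your complexity count for clause (ii). The statement ``for every $n$ in the recursive set of $KP$-axioms, $(\omega,E_x)\models n$'' is \emph{not} $\Pi^0_1$ in $x$, and is not arithmetic in $x$ at all in general. For each \emph{fixed} sentence $\sigma$, satisfaction in $(\omega,E_x)$ is arithmetic in $x$, but its quantifier complexity grows with the quantifier rank of $\sigma$; since $\Delta_0$-separation and $\Sigma$-collection are schemes whose instances have unbounded rank (bounded quantifiers of the object language become genuine number quantifiers after translation), there is no fixed arithmetic level containing all the translated axioms, and by Tarski's theorem a uniform satisfaction predicate $\mathrm{Sat}(x,n)$ cannot be arithmetic in $x$. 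The repair is standard and is precisely the step the paper makes explicit: a recursive (indeed hyperarithmetic) conjunction of arithmetic formulas is $\Delta^1_1$, hence $\Sigma^1_1$ --- equivalently, existentially quantify over a satisfaction class $S$ for $(\omega,E_x)$, assert the Tarski clauses (arithmetic in $x$ and $S$) and that $S$ declares every $KP$-axiom true. With that correction your clause (ii) becomes $\Sigma^1_1$, and since $\Sigma^1_1$ is closed under conjunction the rest of your argument goes through unchanged.
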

\begin{proof}
Every countable $\omega$-model $M$ of $KP$ can be coded by a real $c(M)$ in such a way that the $i\in\omega$ is represented by $2i$ in $c$ and $\omega$ is represented by $1$.
We can then consider a set $S$ of statements saying that a real $c$ codes a model of $KP$ together with $\{P_{k}|k\in\omega+1\}$, where $P_{k}$ is the statement $\forall{i<k}(p(2i,2k)\in c)\wedge\forall{j}\exists{i<k}(p(j,2k)\in{c}\rightarrow j=2i)$ for 
$k\in\omega$ and $P_{\omega}$ is the statement $\forall{i}(p(2i,1)\in c)\wedge\forall{j}\exists{i}(p(j,1)\in c\rightarrow j=2i)$.
Then $\bigwedge{S}$ is a hyperarithmetic conjunction of arithmetic formulas in the predicate $c$. But such a conjunction is equivalent to a $\Sigma_{1}^{1}$-formula.

\end{proof}

\begin{thm}{\label{wsigma1}}
Let $x$ be recognizable by a $wITRM$. Then $\{x\}$ is a $\Sigma_{1}^{1}$-singleton.
\end{thm}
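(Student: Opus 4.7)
The plan is to exhibit an explicit $\Sigma_{1}^{1}$ formula $\phi(y)$ satisfying $\phi(y)\Leftrightarrow y=x$. Fix a $wITRM$-program $P$ recognizing $x$ and take $\phi(y)$ to be
$$\exists c\,\bigl[\,c\text{ codes a countable }\omega\text{-model }M_{c}\text{ of }KP,\ y\in M_{c},\ \text{and}\ M_{c}\models P^{y}\downarrow=1\,\bigr].$$
The argument then splits into verifying that $\phi$ is $\Sigma_{1}^{1}$ in $y$ and that $\phi$ cuts out precisely $\{x\}$.

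For the complexity bound, I would invoke Lemma \ref{kpsigma1} to rewrite ``$c$ codes a countable $\omega$-model of $KP$'' as a $\Sigma_{1}^{1}$ predicate in $c$. The remaining clauses ``$y\in M_{c}$'' and ``$M_{c}\models P^{y}\downarrow=1$'' are arithmetic in $c$ and $y$: the former by inspection of the coding in Lemma \ref{kpsigma1}, the latter because inside an $\omega$-model of $KP$ the $wITRM$-computation on oracle $y$ is defined by a transfinite recursion that $KP$ supports, and satisfaction in a coded countable structure of a fixed first-order formula is arithmetic in the code. Existentially quantifying $c$ over reals preserves $\Sigma_{1}^{1}$.

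For correctness, consider first $y=x$. By assumption $P^{x}\downarrow=1$, and combining Lemma \ref{cyclecrit} with the admissibility of $L_{\omega_{1}^{CK,x}}[x]$ (or with Theorem \ref{relwITRM}) the halt occurs at some $\beta<\omega_{1}^{CK,x}$. Any countable elementary substructure $M_{c}$ of an $L_{\theta}[x]$ with $\theta>\beta$ contains $x$ and $\beta$, models $KP$, and by absoluteness of the defining transfinite recursion up to $\beta$ sees $P^{x}\downarrow=1$, so $\phi(x)$ holds. Conversely, suppose $\phi(y)$ is witnessed by $M_{c}$. By Lemma \ref{relbar}, $\omega_{1}^{CK,y}$ is an initial segment of $On^{M_{c}}$, so $M_{c}$'s simulation of $P^{y}$ agrees with reality at every time $<\omega_{1}^{CK,y}$. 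If $P^{y}\uparrow$ in reality, Lemma \ref{cyclecrit} produces a cycle below $\omega_{1}^{CK,y}$ which $M_{c}$ also sees, and since the easy direction of the cycle criterion is provable in $KP$ this forces $M_{c}\models P^{y}\uparrow$, a contradiction; if $P^{y}\downarrow$ with output $\neq 1$, the halting time is again $<\omega_{1}^{CK,y}$ and absolute to $M_{c}$, so $M_{c}$'s recorded output matches the true one, again contradicting the witness. Hence $P^{y}\downarrow=1$ truly, and recognizability of $x$ by $P$ forces $y=x$.

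The one genuinely non-routine step is the absoluteness argument in the converse direction: one has to rule out that $M_{c}$ either hallucinates a halt or overlooks a cycle. The combination of Lemma \ref{relbar} (the well-founded part of $M_{c}$ reaches $\omega_{1}^{CK,y}$) with Lemma \ref{cyclecrit} (halting-versus-divergence is decided strictly below $\omega_{1}^{CK,y}$) is exactly what controls this.
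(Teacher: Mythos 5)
Your proposal is correct and follows the paper's proof essentially verbatim: the same $\Sigma_{1}^{1}$ formula quantifying over codes of countable $\omega$-models of $KP$ obtained from Lemma \ref{kpsigma1}, with correctness secured by Lemma \ref{relbar} (the well-founded part reaches $\omega_{1}^{CK,y}$) together with Lemma \ref{cyclecrit}, exactly as you describe. The only cosmetic difference is that the paper adds a further conjunct $NC_{P}$ asserting the absence of a cycle in the sense of Lemma \ref{cyclecrit}; since a recognizing program halts on every oracle, and by Lemma \ref{cyclecrit} does so below $\omega_{1}^{CK,y}$, your observation that any $KP$-witness must already report the true output renders that conjunct redundant here.
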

\begin{proof}
Let $P$ be a program that recognizes $x$ on a $wITRM$. Let $KP(z)$ be a $\Sigma_{1}^{1}$-formula (in the predicate $z$) stating that $z$ codes an $\omega$-model of $KP$ with $\omega$ represented by $1$
and every integer $i$ represented by $2i$ as constructed in Lemma \ref{kpsigma1}. 
Let $E(y,z)$ be a first-order formula (in the predicates $y$ and $z$) stating that the structure coded by $z$ contains $y$. (We can e.g. take $E(y,z)$ to be $\exists{k}\forall{i}(z(i)\leftrightarrow z(p(2i,k)))$.)
Furthermore, let $Acc_{P}(z,y)$ be a first-order formula (in the predicates $y$ and $z$) stating that $P^{y}\downarrow=1$ in the structure coded by $z$. 
Finally, let $NC_{P}(y)$ be a first-order formula (in the predicate $y$) stating that in the computation $P^{y}$, there are no two states $s_{\iota_{1}},s_{\iota_{2}}$ with $\iota_{1}<\iota_{2}$
such that $s_{\iota_{1}}=s_{\iota_{2}}$ and, for every $\iota_1<\iota<\iota_2$, the content $r_{i\iota}$ of register $R_i$ at time $\iota$ is at least $r_{i\iota_{1}}$ (the content of $R_i$ at time $\iota_{1}$)
and the index of the active program at time $\iota$ is not smaller than the index of the active program line at time $\iota_{1}$. (This is just the cycle criterion from Lemma \ref{cyclecrit}.)

This is possible in $KP$ models containing $x$ since, by Lemma \ref{relbar} above, $\omega_{1}^{CK,x}$ is an initial segment of the well-founded part of
each such model and, by Lemma \ref{cyclecrit}, the computation either cycles before $\omega_{1}^{CK,x}$ or stops - thus the cycling or halting behaviour takes part in the well-founded part of the model and is hence
absolute between such a model and $V$.
Now, take $\phi(a)$ to be $\exists{z}(KP(z)\wedge E(a,z)\wedge Acc_{P}(z,a)\wedge NC_{P}(a))$. This is a $\Sigma_{1}^{1}$-formula. We claim that $x$ is the only solution to $\phi(a)$:
To see this, first note that $x$ clearly is a solution, since $\omega_{1}^{CK,x}$ is an initial segment of every $KP$-model containing $x$ by Lemma \ref{relbar}.

On the other hand, assume that $b\neq x$. In this case, as $P$ recognizes $x$, we have $P^{b}\downarrow=0$ in the real world, and hence, by absoluteness of $wITRM$-(oracle)-computations for $KP$-models containing
the relevant oracles, also inside $L_{\omega_{1}^{CK,b}}[b]$. Now $L_{\omega_{1}^{CK,b}}[b]$ is certainly a countable $KP$-model containing $b$, hence a counterexample to $\phi(b)$. Hence $\phi(b)$ is false if $b\neq x$, as desired.
\end{proof}

\begin{corollary}
If a real $x$ is $wITRM$-recognizable, then it is $wITRM$-computable. Hence, there are no lost melodies for weak $ITRM$s.
\end{corollary}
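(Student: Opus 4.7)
The plan is to assemble the corollary from two inputs already available: Theorem \ref{wsigma1} (every $wITRM$-recognizable real is a $\Sigma_{1}^{1}$-singleton) and Theorem \ref{relwITRM} (the $wITRM$-computable reals are exactly the hyperarithmetic ones). The gap between these is the classical fact that $\Sigma_{1}^{1}$-singletons are hyperarithmetic, and that is the only genuinely nontrivial ingredient; however, it follows from an essentially one-line relativization of Kleene's theorem $\Delta_{1}^{1} = \text{HYP}$, so even this obstacle is mild.

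Concretely, first I would invoke Theorem \ref{wsigma1} to obtain a $\Sigma_{1}^{1}$-formula $\phi(a)$ such that $\phi(a)$ holds iff $a = x$. The key observation is that this makes both membership and non-membership in $x$ expressible as $\Sigma_{1}^{1}$-conditions on natural numbers. Indeed, for every $n \in \omega$,
\[
n \in x \;\Longleftrightarrow\; \exists a\,(\phi(a) \wedge n \in a),
\qquad
n \notin x \;\Longleftrightarrow\; \exists a\,(\phi(a) \wedge n \notin a),
\]
and each right-hand side is $\Sigma_{1}^{1}$, since $\phi$ is $\Sigma_{1}^{1}$ and the atomic conditions $n \in a$, $n \notin a$ are arithmetical in the set variable $a$. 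Hence $x$ is simultaneously $\Sigma_{1}^{1}$ and $\Pi_{1}^{1}$, i.e. $\Delta_{1}^{1}$.

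By the Kleene--Suslin theorem, $\Delta_{1}^{1} = \text{HYP}$, so $x$ is hyperarithmetic. Applying Theorem \ref{relwITRM} (in its unrelativized form), $x$ is $wITRM$-computable, which is what was to be shown; the contrapositive is the non-existence of lost melodies for weak $ITRM$s. As noted, the only step that is not a direct citation is the passage from "$\Sigma_{1}^{1}$-singleton" to "$\Delta_{1}^{1}$", and since that step is immediate from the symmetry of the two displayed equivalences, I expect no real obstacle in the write-up.
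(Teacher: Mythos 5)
Your proof is correct, but it bridges the gap between Theorem \ref{wsigma1} and Theorem \ref{relwITRM} by a genuinely different route than the paper. The paper invokes Kreisel's basis theorem: for any non-hyperarithmetic $a$, every nonempty $\Sigma_{1}^{1}$ set contains some $b$ with $a\nleq_{h}b$; applied to $B=\{x\}$ and $a=x$, this forces $x\in HYP$, since otherwise $x\nleq_{h}x$. You instead exploit the uniqueness of the witness directly: because $\phi$ has exactly one solution, both $n\in x$ and $n\notin x$ are equivalent to the $\Sigma_{1}^{1}$ conditions $\exists a\,(\phi(a)\wedge n\in a)$ and $\exists a\,(\phi(a)\wedge n\notin a)$, so $x$ is $\Delta_{1}^{1}$ and hence hyperarithmetic by Kleene's theorem $\Delta_{1}^{1}=HYP$. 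Both arguments are sound and both conclude via Theorem \ref{relwITRM}. Your version is the more elementary and self-contained one --- the uniqueness trick is the standard textbook proof that $\Sigma_{1}^{1}$-singletons are $\Delta_{1}^{1}$, and it makes transparent exactly where one-point-ness is used (and why the argument breaks for $\Pi_{1}^{1}$-singletons such as Kleene's $\mathcal{O}$, where rewriting the complement would push the complexity up to $\Sigma_{2}^{1}$). The paper's version is shorter on the page at the cost of citing a stronger named result. One small point of care in your write-up: the two displayed equivalences both need the nonemptiness of the solution set, which is supplied by the fact that $x$ itself satisfies $\phi$; you use this implicitly, and it is worth stating.
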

\begin{proof}
By Kreisel's basis theorem (see \cite{Sa}, p. $75$), if $a$ is not hyperarithmetical and $B\neq\emptyset$ is $\Sigma_{1}^{1}$, then $B$ contains some element $b$ such that $a\nleq_{h}b$. 
Now suppose that $x$ is $wITRM$-recognizable. By Theorem \ref{wsigma1}, $\{x\}$ is $\Sigma_{1}^{1}$ and certainly non-empty. If $x$ was not hyperarithmetical, then, by Kreisel's theorem, $\{x\}$ would
contain some $b$ such that $x\nleq_{h}b$. But the only element of $\{x\}$ is $x$, so $x\notin HYP$ implies $x\nleq_{h}x$, which is absurd. Hence $x\in HYP$. So $x$ is $wITRM$-computable.
\end{proof}

\section{Resetting ITRMs}

$ITRM$-recognizability was considered in \cite{ITRM}, \cite{Ca} and \cite{Ca2}. We give here a summary of some of the most important results.
Recall that an $ITRM$ is different from a $wITRM$ in that it, in case of a register overflow, resets the content of the overflowing registers to $0$ and continues computing.

The following characterization of the computational strength of $ITRM$s with real oracles is a relativized version of the main theorem of \cite{KoMi}:

\begin{thm}
 $x$ is $ITRM$-computable in the oracle $y$ iff $x\in L_{\omega_{\omega}^{CK,y}}[y]$.
\end{thm}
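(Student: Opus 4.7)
The plan is to follow the unrelativised argument of \cite{KoMi} step by step, uniformly replacing $\omega_n^{CK}$ with $\omega_n^{CK,y}$ and $L_\alpha$ with $L_\alpha[y]$. I describe the two directions.

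For the forward direction ($x$ ITRM-computable in $y$ implies $x\in L_{\omega_\omega^{CK,y}}[y]$), fix a program $P$ using $n$ registers. I would show by induction on $n$ that every halting ITRM$^y$-computation with $n$ registers halts below $\omega_n^{CK,y}$: either some register eventually stops overflowing, so the tail is essentially an $(n-1)$-register computation and the inductive hypothesis applies, or some register overflows cofinally at a $y$-admissible ordinal, and a relativised analogue of the cycle criterion from Lemma \ref{cyclecrit} (adapted for the reset behaviour) yields the bound. Taking the supremum over $n$ gives the halting bound $\omega_\omega^{CK,y}$. Since ITRM-computations are absolute to transitive $\omega$-models of $KP$ containing $y$ (exactly as exploited in Lemma \ref{relbar}), the bits of $x$ can be recovered inside $L_{\omega_\omega^{CK,y}}[y]$, hence $x\in L_{\omega_\omega^{CK,y}}[y]$.

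For the converse, I would construct, uniformly in $n$, an ITRM$^y$-program that enumerates codes for the levels $L_\alpha[y]$ with $\alpha<\omega_n^{CK,y}$, using the register-reset behaviour to pass through limit stages of the hierarchy construction. The oracle $y$ enters only as seed data at level zero: the rest of the construction mirrors the parameter-free simulation of \cite{KoMi}. Given such an enumeration, deciding membership in a fixed real $x\in L_{\omega_\omega^{CK,y}}[y]$ reduces to locating $x$ at some level $L_\alpha[y]$ with $\alpha<\omega_n^{CK,y}$ for some fixed $n$ depending on $x$, and then reading off its bits.

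The main obstacle is the lower-bound simulation: one must verify both that $n$ registers provide enough computational power to reach stage $\omega_n^{CK,y}$ of the $L[y]$-hierarchy, and that the $L[y]$-construction admits a faithful ITRM$^y$-implementation with the attendant overflow bookkeeping. The upper bound is comparatively routine, amounting to a careful relativisation of the admissibility and cycle-detection arguments of \cite{KoMi}. Throughout, the only genuinely new ingredient over the parameter-free case is that admissibility and well-foundedness are everywhere taken relative to $y$, which is legitimate because, as in Lemma \ref{relbar}, $\omega_n^{CK,y}$ sits inside the well-founded part of any $\omega$-model of $KP$ containing $y$.
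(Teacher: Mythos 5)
Your sketch is essentially the intended argument: the paper offers no proof of this theorem, stating only that it is the relativized version of the main theorem of \cite{KoMi}, and your outline---relativizing the halting-time bound for $n$-register programs via the cycle criterion and absoluteness of computations for $\omega$-models of $KP$ containing $y$ in one direction, and the uniform computation of codes for the levels $L_{\alpha}[y]$ with $\alpha<\omega_{n}^{CK,y}$ in the other---is precisely that relativization. The minor imprecision about whether the bound for $n$ registers is $\omega_{n}^{CK,y}$ or $\omega_{n+1}^{CK,y}$ does not affect the supremum $\omega_{\omega}^{CK,y}$ and hence not the theorem.
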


We saw that computability equals recognizability for $wITRM$s. For $ITRM$s, the situation is very different. Clearly, analogous to Proposition \ref{trivialdirection}, the computable reals are still recognizable. 
But, for $ITRM$s, the lost melody phenomenon does occur:

\begin{thm}{\label{lostmelody}}
 There exists a real $x$ such that $x$ is not $ITRM$-computable, but $ITRM$-recognizable.
\end{thm}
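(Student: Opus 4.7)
The plan is to adapt the classical Hamkins--Lewis construction of lost melodies for $ITTM$s, replacing their threshold ordinal $\omega_1^{CK}$ by the $ITRM$-threshold $\omega_\omega^{CK}$ supplied by the preceding characterisation theorem. Concretely, I would let $x$ be the $<_L$-least real which codes (as a well-founded extensional $\in$-structure on $\omega$) a level $L_\alpha$ of the constructible hierarchy, where $\alpha$ is chosen large enough to force $x\notin L_{\omega_\omega^{CK}}$ but small enough to keep $x\in L_{\omega_\omega^{CK,x}}[x]$. A convenient choice is $\alpha := $ the least admissible above $\omega_\omega^{CK}$, so that $L_\alpha\models KP$ and $\alpha>\omega_\omega^{CK}$; since $x$ codes a well-ordering of type $\alpha$, $\omega_\omega^{CK,x}$ then exceeds $\alpha$ and $x\in L_{\alpha+1}\subseteq L_{\omega_\omega^{CK,x}}[x]$.

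Non-computability of $x$ is immediate: a code of $L_\alpha$ cannot appear in $L$ before stage $\alpha$, so $x\notin L_{\omega_\omega^{CK}}$, and the preceding theorem (with empty oracle) rules out $ITRM$-computability. To prove recognizability I would design an $ITRM$-program which, on oracle $y$, accepts precisely when four conditions hold: (i) $y$ codes a well-founded extensional $\in$-structure on $\omega$; (ii) the transitive collapse satisfies $V=L$, hence equals some $L_\beta$; (iii) the height $\beta$ has the defining property of $\alpha$; and (iv) $y$ is $<_L$-least among reals passing (i)--(iii). Well-foundedness is $\Pi^1_1$ in $y$ and so is decidable from $y$ by an $ITRM$ (which easily computes the hyperjump), and the remaining tests can be phrased as first-order properties of a single set sitting inside $L_{\omega_\omega^{CK,y}}[y]$; hence, by the relativised Koepke--Miller theorem, all four are decidable from oracle $y$.

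The main obstacle is test (iii): pinning down $\alpha$ by a condition that is simultaneously satisfied by the intended ordinal, uniformly checkable from an arbitrary oracle within $ITRM$-power, and, when combined with (iv), isolates a single real. The cleanest device is to let the threshold depend on the oracle, setting $\alpha(y):=$ the least admissible above $\omega_\omega^{CK,y}$, and then to check inside $L_{\omega_\omega^{CK,y}}[y]$ that the height of the collapse equals $\alpha(y)$; the $<_L$-leastness clause (iv) then forces all successful candidates to share the same $\alpha$-value and hence to coincide with $x$. Once this bookkeeping is in place, the remaining verifications are routine manipulations inside $L_{\omega_\omega^{CK,y}}[y]$, completing the proof.
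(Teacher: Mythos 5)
Your overall architecture (take the $<_L$-least code of a distinguished $L$-level, check well-foundedness, the theory of the collapse, the height, and $<_L$-minimality) is the right shape and matches the paper's construction, which takes $x$ to be the $<_L$-minimal code of the $\in$-minimal $L_\alpha\models ZF^-$. But your device for pinning down the height --- test (iii) with the oracle-dependent threshold $\alpha(y):=$ the least admissible above $\omega_\omega^{CK,y}$ --- is not just circular, it is contradictory: if $y$ codes a well-founded structure of ordinal height $\beta$, then $y$ computes a well-ordering of type $\beta$, so $\beta<\omega_1^{CK,y}\leq\omega_\omega^{CK,y}<\alpha(y)$. Hence \emph{no} real can code $L_{\alpha(y)}$; in particular your intended $x$ (which codes $L_\alpha$ for $\alpha$ the least admissible above $\omega_\omega^{CK}$, whence $\omega_\omega^{CK,x}>\alpha$) fails its own test, and the program rejects everything. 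Relativizing the threshold to the oracle always pushes it strictly above the height of anything the oracle can code, so this approach cannot isolate any real. The subsequent claim that clause (iv) "forces all successful candidates to share the same $\alpha$-value" has nothing to bite on, since the candidate set is empty; and in any case $<_L$-minimality is not a local property of $y$ --- one must explain how the machine searches the finitely many reals $<_L y$, which is where the Boolos--Putnam fact $cc(L_\alpha)\in L_{\alpha+2}$ and the computability of the truth predicate of $L_{\alpha+2}$ from a code of $L_\alpha$ enter in the actual proof.

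The fix, and the point of the paper's choice, is to pin down $\alpha$ by an \emph{internal}, oracle-independent, first-order property of the coded structure rather than by its position relative to $\omega_\omega^{CK,y}$: take $\alpha$ least with $L_\alpha\models ZF^-$. Then (iii) becomes "the collapse satisfies $ZF^-$ and no proper initial segment $L_\beta$ of it does", which is decidable from the truth predicate of the coded structure (computable by an $ITRM$ from a code, by the techniques of the cited $ITRM$ paper), and non-computability still follows since this $\alpha$ lies far above $\omega_\omega^{CK}$. A secondary gap: you invoke the relativized Koepke--Miller theorem to conclude that properties "phrased inside $L_{\omega_\omega^{CK,y}}[y]$" are decidable by an $ITRM$ with oracle $y$, but that theorem asserts computability of individual reals in the oracle, not uniform decidability of a predicate of the oracle; the recognizing program must be exhibited via explicit uniform subroutines (well-foundedness tests, iterated hyperjumps, truth-predicate evaluation), which is exactly what the construction in the cited reference supplies.
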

\begin{proof}
$x$ can be taken to be a $<_{L}$-minimal real coding an $\in$-minimal $L_{\alpha}$ such that $L_{\alpha}\models ZF^{-}$. See \cite{ITRM} for the details. 
\end{proof}

\begin{rem} There are more straightforward examples. In \cite{Ca2}, it is shown that, if $(P_{i}|i\in\omega)$ is some natural enumeration of the $ITRM$-programs,
then $h:=\{i\in\omega|P_{i}\downarrow\}$, the halting number for $ITRM$s, is recognizable. The usual argument of course shows that it is not $ITRM$-computable. \end{rem}

\begin{defini}
 $RECOG$ denotes the set of $ITRM$-recognizable reals.
\end{defini}

\begin{rem} Write $RECOG_n$ for the set of reals that are recognizable by an $ITRM$ using at most $n$ registers. It was shown in \cite{Ca} that
$RECOG_{n}\subsetneq RECOG$, i.e. the recognizability strength of $ITRM$s increases with the number of registers. This corresponds to the result 
established in \cite{KoMi} that the computational strength of $ITRM$s increases with the number of registers.\end{rem}

Using Shoenfield's absoluteness
lemma, it is not hard to see that recognizable reals are always constructible (see \cite{Ca}). We consider
the distribution of recognizable reals in the canonical well-ordering $<_{L}$ of the constructible universe:

\begin{thm}
There are gaps in the $ITRM$-recognizable reals, i.e. there are $x,y,z\in\mathcal{P}(\omega)\cap L$ such that $x<_{L}y<_{L}z$, $x,z\in RECOG$, but $y\notin RECOG$.
\end{thm}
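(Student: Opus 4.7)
The plan is to sandwich a diagonal real between two recognizable reals. Enumerate the $ITRM$-programs as $(P_n)_{n\in\omega}$, and for each $n$ let $r_n$ denote the unique real recognized by $P_n$ if such a real exists, and set $r_n := 0$ (the constantly $0$ real) otherwise. Define $y\subseteq\omega$ by $y(n) := 1 - r_n(n)$. A standard diagonal argument shows $y \neq r_n$ for every $n$ that codes a recognizing program, so $y \notin RECOG$. For the endpoints, take $x := 0$, which is $ITRM$-computable and thus recognizable, and $z$ to be the real coding the $\in$-minimal $L_\alpha$ with $L_\alpha \models ZF^{-}$, which is recognizable by Theorem \ref{lostmelody}.

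To verify $x <_L y <_L z$: on the left, the program computing the constant $0$ real occurs as some $P_k$, giving $r_k = 0$ and hence $y(k) = 1$, so $y \neq 0$ and therefore $\emptyset = x <_L y$. On the right, one first observes that all $ITRM$-recognizable reals lie in $L_\alpha$, since $L_\alpha \models ZF^{-}$ is closed under the operations needed to carry out an $ITRM$-computation on any real oracle it contains, and the $L$-ranks of recognizable reals are bounded below $\alpha$. Inside $L_\alpha$, the condition ``$P_n$ recognizes $r$'' is expressible, and by absoluteness of $ITRM$-computations for sufficiently closed $L$-models, $L_\alpha$ correctly identifies each $r_n$. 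Hence the diagonal definition of $y$ can be performed in $L_\alpha$, yielding $y \in L_\alpha$; since $z$ codes $L_\alpha$ itself, $z \notin L_\alpha$ and therefore $y <_L z$.

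The main obstacle is the absoluteness step: one must check that the recognition relation, which in principle quantifies over all reals in $V$, is correctly computed inside $L_\alpha$. The needed ingredient is that every $ITRM$-recognizable real has $L$-rank strictly below $\alpha$, so that $L_\alpha$'s version of the universal quantifier ``$\forall r' \neq r$'' captures every potential counterexample to recognition. This in turn follows from the characterization of $ITRM$-computability in terms of $L_{\omega_\omega^{CK,y}}[y]$ combined with the closure properties of the minimal $ZF^{-}$-model $L_\alpha$; if one wishes to avoid any dependence on the precise supremum of recognizable ranks, one can instead take $z$ to code a large enough countable admissible structure that provably contains all recognizable reals, at the cost of a slightly less clean statement.
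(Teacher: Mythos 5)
Your construction of the non-recognizable real is fine in isolation (the diagonal $y(n)=1-r_n(n)$ is indeed not recognized by any $P_n$, and by Shoenfield-type absoluteness it is constructible), but the argument for $y<_{L}z$ rests on a claim that is false and is in fact contradicted by your own choice of $z$. You assert that \emph{all} $ITRM$-recognizable reals lie in $L_{\alpha}$, where $L_{\alpha}$ is the $\in$-minimal $ZF^{-}$-model. But $z$ itself is a recognizable real that codes $L_{\alpha}$ and hence does not belong to $L_{\alpha}$ --- that is exactly the content of the lost melody theorem you invoke. More generally, the paper's distribution results show that new recognizable reals appear cofinally in $L_{\sigma}$, far above the least $ZF^{-}$-level. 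Consequently $L_{\alpha}$ does not compute the recognition relation correctly (the program recognizing $z$ accepts no real in $L_{\alpha}$ at all, so $L_{\alpha}$'s version of $r_n$ is wrong for that index), the diagonalization cannot be carried out inside $L_{\alpha}$, and there is no reason the true diagonal real should have $L$-rank below that of $z$; on the contrary, it is naturally defined only over a level of $L$ that sees all the $r_n$, i.e.\ a level at or above $\sigma$. Your proposed repair --- take $z$ to code a structure provably containing all recognizable reals --- is self-defeating: such a $z$ would lie $<_{L}$-above every recognizable real, hence outside $L_{\sigma}\supseteq RECOG$, hence could not itself be recognizable.

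The paper avoids diagonalization entirely and uses a counting argument: since there are only countably many programs, $RECOG$ is countable, so some countable $L_{\alpha}\models ZF^{-}$ already contains a non-recognizable real $y$; one then takes the $\in$-minimal such $L_{\alpha}$, lets $z$ be its $<_{L}$-minimal code (recognizable by the lost melody machinery, and $<_{L}$-above everything in $L_{\alpha}$, in particular above $y$), and $x=0$. The non-recognizable witness is found \emph{inside} the level coded by $z$ by cardinality considerations, which is precisely what your diagonal construction cannot guarantee. If you want to salvage a diagonal-style argument, you would have to bound the $L$-rank of $y$ below that of some recognizable real, and nothing in your construction provides such a bound.
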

\begin{proof}
 As there are only countably many $ITRM$-recognizable real, there must exist a countable $\alpha$ such that $L_{\alpha}\models ZF^{-}$ and $L_{\alpha}$ contains
some non-recognizable reals $y$. Let $z$ be the $<_{L}$-minimal code of the $\in$-minimal $L_{\alpha}$ with these properties and let $x=0$. Then $x$, $y$ and $z$
are as desired. The details can be found in \cite{Ca}.
\end{proof}

This suggest the detailed study of the distribution of the $ITRM$-recognizable reals among the constructible reals, which was carried out in \cite{Ca} and \cite{Ca2}.
We summarize the main results.

\begin{defini}
 Let $\alpha\in On$. $\alpha$ is called $\Sigma_1$-fixed iff there exists a $\Sigma_1$-formula $\phi$ such that $\alpha$ is minimal with $L_{\alpha}\models\phi$.
We also let\\ $\sigma:=\text{sup}\{\alpha|\alpha\text{ is }\Sigma_{1}-\text{fixed}\}$.
\end{defini}

\begin{rem} It is easy to see by reflection that the $\Sigma_1$-fixed ordinals are countable and that there are countably many of them (as there are only countable many formulas). Hence $\sigma$ is countable as well.
It can also be shown that $\sigma$ is the supremum of parameter-free $OTM$-halting times. \end{rem}

\begin{thm}
 (a) $RECOG\subseteq L_{\sigma}$\\
 (b) $\{\alpha|RECOG\cap(L_{\alpha+1}-L_{\alpha})\neq\emptyset\}$ is cofinal in $\sigma$\\
 (c) For every $\gamma<\sigma$, there exists $\alpha<\sigma$ such that\\ $RECOG\cap(L_{\alpha+\gamma}-L_{\alpha})=\emptyset$.
\end{thm}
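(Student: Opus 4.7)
The plan is to handle the three parts in turn, relying on two facts: by the relativized main theorem of \cite{KoMi}, an $ITRM$-computation $P^y$ halts (when at all) within $L_{\omega_{\omega}^{CK,y}}[y]$ and is absolute to any transitive $KP$-model containing $\omega_{\omega}^{CK,y}$; and $\sigma$ is the supremum of the countable set of $\Sigma_1$-fixed ordinals (equivalently, of parameter-free $OTM$-halting times).

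For (a), let $P$ be a program recognizing $x$, and let $\phi_P$ be the $\Sigma_1$ formula asserting ``there exist a real $y$ and a halting $ITRM$-computation of $P^y$ with output $1$''. The least $\alpha$ with $L_\alpha \models \phi_P$ is $\Sigma_1$-fixed, hence $\alpha<\sigma$. A witness $y\in L_\alpha$ satisfies $P^y\downarrow = 1$ in $V$ by absoluteness, so by recognizability $y=x$, whence $x\in L_\alpha\subseteq L_\sigma$.

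For (b), I would attach to each $\Sigma_1$-fixed $\beta<\sigma$ a witnessing $\Sigma_1$ sentence $\psi_\beta$ and let $x_\beta$ be the $<_L$-least real coding the $\in$-minimal $L_\delta$ satisfying $\psi_\beta$. Then $x_\beta$ first appears at an $L$-level at least $\beta$, and is $ITRM$-recognizable by a program that on oracle $z$ checks (i) that $z$ codes a transitive extensional $\in$-structure whose $L$-type minimally satisfies $\psi_\beta$, and (ii) that $z$ is $<_L$-least with this property, using the bounded $<_L$-comparison routine available to $ITRM$s once the oracle supplies the code. Letting $\beta$ range over a cofinal family of $\Sigma_1$-fixed ordinals in $\sigma$ yields the cofinality claim.

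For (c), I would combine the countability of $RECOG$ with the $\Sigma_1$-definability supplied by (a). Call an ordinal $\alpha$ \emph{active} if some recognizable real first enters $L$ at level $\alpha$; by (a) every active ordinal is $\Sigma_1$-fixed. Given $\gamma<\sigma$, form the $\Sigma_1$ formula over $L$ that, using $\{\phi_P : P\in\omega\}$ as a uniform parametrization of $RECOG$, asserts the existence of an ordinal $\alpha$ above which no active ordinal lies in $[\alpha,\alpha+\gamma)$; the least such $\alpha$ is itself $\Sigma_1$-fixed, hence below $\sigma$. The main obstacle is exactly this last step: naively, countability of $RECOG$ only delivers $\alpha<\omega_1$, and pushing $\alpha$ below $\sigma$ requires the $\phi_P$'s to have complexity independent of $P$ and the resulting ``first gap of length $\gamma$'' predicate to stay $\Sigma_1$ over $L$. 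Verifying this uniform parametrization is the delicate bookkeeping behind the arguments of \cite{Ca} and \cite{Ca2}.
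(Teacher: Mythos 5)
The paper itself only cites \cite{Ca} for this theorem, so there is no in-paper proof to compare against; judged on its own terms, your part (a) is essentially the standard argument and is correct (modulo invoking the fact, stated just before the theorem, that recognizable reals are constructible, which you need in order to know that $\phi_P$ is satisfied in some $L_\alpha$ at all). Parts (b) and (c) each have a genuine gap. In (b), the recognizability of $x_\beta$ is asserted but not available at the level of generality at which you use it: for an arbitrary $\Sigma_1$-formula $\psi_\beta$, the $\in$-minimal $L_\delta\models\psi_\beta$ need not satisfy $ZF^-$, need not be first-order characterizable as an $L$-level in a way an $ITRM$ can verify from a code, and -- most seriously -- its $<_L$-minimal code need not lie in $L_{\delta+2}$ (that containment is the Boolos--Putnam fact underlying the lost melody theorem, and it needs $\delta$ to satisfy $ZF^-$ or at least to sit at an index in the right way). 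Without it, your step (ii) cannot be carried out: the $<_L$-smaller candidate codes may first appear far above anything computable from the oracle, which only yields $L_{\omega_\omega^{CK,z}}[z]$. The repair is to run the argument with formulas of the form ``$ZF^-$ holds and some $L_{\delta'}$ satisfies $\psi_\beta$'': their minimal levels are still cofinal in $\sigma$ and are $ZF^-$-levels, so the full machinery of \cite{ITRM} applies.

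For (c) you have correctly located the difficulty, but the sketch does not close it, and the route has concrete obstacles beyond bookkeeping. First, $\gamma$ is an arbitrary ordinal below $\sigma$ and cannot occur as a parameter in a parameter-free $\Sigma_1$-formula; you must replace it by a $\Sigma_1$-fixed $\delta\geq\gamma$ (harmless, since a gap of length $\delta$ contains one of length $\gamma$). Second, the predicate ``no active ordinal lies in $[\alpha,\alpha+\delta)$,'' written out via the $\phi_P$'s, is a universally quantified implication between $\Sigma_1$-statements, hence not $\Sigma_1$ as it stands; one must bound the witnessing computations inside a definable $L$-level (using the $\omega_\omega^{CK,y}$-bound on halting times) so that both sides of the implication become decided inside a single set-sized structure. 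Third, you never argue that a witness $\alpha$ exists at all: the natural witness is $\alpha=\sigma$ itself, and verifying it uses $L_\sigma\prec_{\Sigma_1}L$ (the $\Sigma_1$-fact ``some $y$ with $P^y\downarrow=1$ exists'' reflects into $L_\sigma$ together with its halting computation); only after that does minimizing the resulting $\Sigma_1$-sentence produce an $\alpha<\sigma$. Finally, ``active'' must be defined via the programs $P$ and the least levels satisfying $\phi_P$, not via the set $RECOG$ itself, which is not uniformly definable over $L$-levels; your parametrization by $\{\phi_P\}$ is the right idea but has to replace, not merely supplement, the reference to $RECOG$.
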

\begin{proof}
 See \cite{Ca}.
\end{proof}

The $ITRM$-computability of a real can be effectively characterized in purely set theoretical terms (namely as being an element of $L_{\omega_{\omega}^{CK}}$). Correspondingly,
we have the following necessary criterion for $ITRM$-recognizability:

\begin{lemma}{\label{bigadmissibles}}
 Let $x\in RECOG$. Then $x\in L_{\omega_{\omega}^{CK,x}}$. In particular, we have $\omega_{\omega}^{CK,x}>\omega_{\omega}^{CK}$, hence $\omega_{i}^{CK,x}>\omega_{i}^{CK}$ for some $i\in\omega$.
\end{lemma}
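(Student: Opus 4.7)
The plan is to combine the recognition property with the fact that $ITRM$-computations are bounded by $\omega$ many admissibles to identify $x$ as a parameter-free definable object inside a modest level of the constructible hierarchy, following the pattern of a Gandy--Solovay-type argument for $\Sigma_{1}^{1}$-singletons.

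First I would invoke the relativized version of the characterization theorem stated just above this lemma: for any real $y$, an $ITRM$-computation $P^{y}$ halts iff it does so at some stage $\tau<\omega_{\omega}^{CK,y}$, and the halting (together with its output) is correctly detected inside $L_{\omega_{\omega}^{CK,y}}[y]$. Consequently the predicate $\psi(y):\equiv(P^{y}{\downarrow}=1)$ is absolute between $V$ and any transitive structure that contains $y$ together with its first $\omega$ $y$-admissibles. Because $P$ recognizes $x$, the formula $\psi$ picks out $\{x\}$ as its unique solution, both in $V$ and in $L_{\omega_{\omega}^{CK,x}}[x]$.

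Next I would form the $\Sigma_{1}$-Skolem hull $H$ of $\emptyset$ inside $L_{\omega_{\omega}^{CK,x}}[x]$. Since $x$ is the unique solution of the parameter-free formula $\psi$, $x\in H$. By condensation for $L[x]$, the transitive collapse of $H$ is of the form $L_{\bar{\alpha}}[x]$ with $\bar{\alpha}\leq\omega_{\omega}^{CK,x}$, and the real $x$ is fixed by the collapse. The crucial final move is to observe that because the $\Sigma_{1}$-definition of $x$ is parameter-free and the oracle $x$ enters $L_{\omega_{\omega}^{CK,x}}[x]$ only through the evaluation of $\psi$, the ordinals, admissibles, and formulas mentioned in the hull are already definable in the pure hierarchy. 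Hence the same construction can be performed inside $L_{\omega_{\omega}^{CK,x}}$ itself, yielding $x\in L_{\bar{\alpha}}\subseteq L_{\omega_{\omega}^{CK,x}}$.

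The main obstacle is exactly this last step: rigorously passing from $x\in L_{\bar{\alpha}}[x]$ to $x\in L_{\bar{\alpha}}$. One handles it by showing that the first $\omega$ many $x$-admissibles are themselves $\Sigma_{1}$-definable in the hull and lie in $L_{\omega_{\omega}^{CK,x}}$, so that the $\psi$-driven search for $x$ can be re-enacted inside the pure constructible hierarchy. For the ``in particular'' clause, note that if $x$ is already $ITRM$-computable, then $x\in L_{\omega_{\omega}^{CK}}$ and the strict inequality need not hold; in the interesting case $x\notin L_{\omega_{\omega}^{CK}}$, combining this with $x\in L_{\omega_{\omega}^{CK,x}}$ forces $\omega_{\omega}^{CK,x}>\omega_{\omega}^{CK}$, and writing both ordinals as $\sup_{i}\omega_{i}^{CK,x}$ and $\sup_{i}\omega_{i}^{CK}$ respectively, the strict inequality of the suprema forces $\omega_{i}^{CK,x}>\omega_{i}^{CK}$ for some $i\in\omega$.
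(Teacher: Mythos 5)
The setup of your argument is fine: the halting time bound for $ITRM$s does make $P^{y}\downarrow=1$ absolute to $L_{\omega_{\omega}^{CK,y}}[y]$, the witness is unique there, and your reading of the ``in particular'' clause (which really only makes sense for non-computable $x$, with the step from strict inequality of suprema to strict inequality of some term being correct) is a fair observation. But the proof does not close, and the point where it fails is exactly the step you yourself flag. The $\Sigma_{1}$-hull of $\emptyset$ in $L_{\omega_{\omega}^{CK,x}}[x]$ collapses to some $L_{\bar{\alpha}}[x]$ containing $x$ -- which is vacuous, since $x\in L_{\omega+1}[x]$ anyway. Your proposed repair, ``the same construction can be performed inside $L_{\omega_{\omega}^{CK,x}}$ itself,'' is circular: the hull of $\emptyset$ computed inside the \emph{pure} hierarchy $L_{\omega_{\omega}^{CK,x}}$ can only contain $x$ if $x$ is already an element of $L_{\omega_{\omega}^{CK,x}}$, which is precisely the conclusion of the lemma. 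The claim that ``the oracle $x$ enters $L_{\omega_{\omega}^{CK,x}}[x]$ only through the evaluation of $\psi$'' is not true in any usable sense: $x$ is a predicate in the construction of every level $L_{\beta}[x]$, the $\Sigma_{1}$ witness for $\psi(x)$ (the halting computation of $P^{x}$, which records the oracle answers) is itself an $x$-dependent object, and in general $L_{\beta}[x]$ contains reals absent from $L_{\beta}$ at every level. So nothing in your argument rules out the a priori possibility that $x$ first appears in the pure hierarchy only far above $\omega_{\omega}^{CK,x}$.

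For comparison: the paper does not prove this lemma either, but defers entirely to \cite{Ca2}, so there is no in-text argument to measure yours against; what can be said is that the content of the lemma is exactly the transfer from the relativized to the unrelativized hierarchy that your hull construction leaves untouched. Closing the gap requires a genuinely new ingredient -- in \cite{Ca2} this is done by showing that a recognizable $x$ can be $ITRM$-\emph{computed} from suitable oracles (codes for sufficiently large $L$-levels or well-orders) and then applying a boundedness/basis-type argument to locate $x$ below $\omega_{\omega}^{CK,x}$ in $L$ itself, in the spirit of the Kreisel-basis argument the paper uses for $wITRM$s, rather than a condensation argument over $L[x]$.
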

\begin{proof}
 See \cite{Ca2}.
\end{proof}

Lemma \ref{bigadmissibles} in fact allows a machine-independent characterization of recognizability:

\begin{thm}
 Let $x\in\mathcal{P}^{L}(\omega)$. Then $x\in RECOG$ iff $x$ is the unique witness for some $\Sigma_{1}$-formula in $L_{\omega_{\omega}^{CK,x}}$.
\end{thm}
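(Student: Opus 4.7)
The plan is to prove the two directions of the biconditional separately. The forward direction is essentially formal; the reverse direction is where the real work lies.

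For the forward direction, I would let $P$ be an $ITRM$-program that recognises $x$ and take
\[\phi(y)\ :\equiv\ \exists c\,\bigl(c\text{ codes a halting }ITRM\text{-computation of }P\text{ on oracle }y\text{ with output }1\bigr).\]
This is $\Sigma_1$, because the step-by-step verification of a coded $ITRM$-run (successor update, $\liminf$-plus-reset behaviour at limits, program line, register contents) is $\Delta_1$ over $KP$. To see that $L_{\omega_\omega^{CK,x}}\models\phi[x]$, I would invoke Lemma \ref{bigadmissibles} (which places $x$ inside $L_{\omega_\omega^{CK,x}}$) together with the relativised main theorem of \cite{KoMi}, so that the halting time of $P^x$, and hence the coded halting computation, lies below $\omega_\omega^{CK,x}$. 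Uniqueness in the model follows from upward $\Sigma_1$-absoluteness: any $y\in L_{\omega_\omega^{CK,x}}$ satisfying $\phi$ in the model also satisfies it in $V$, forcing $P^y\downarrow=1$ and hence $y=x$ by the recognition property.

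For the reverse direction, I would fix a $\Sigma_1$-formula $\phi$ whose unique witness in $L_{\omega_\omega^{CK,x}}$ is $x$, and construct a recognising program $Q$ by leveraging the main theorem of \cite{KoMi} in its relativised form: an $ITRM$ in oracle $y$ has effective access to $L_{\omega_\omega^{CK,y}}[y]$. Concretely, on input oracle $y$, $Q$ enumerates candidate $\Sigma_1$-witnesses of $\phi$ inside $L_{\omega_\omega^{CK,y}}[y]$, checks that exactly one such witness exists there, and finally compares that witness bitwise with $y$. When $y=x$ this succeeds by hypothesis, using that $L_{\omega_\omega^{CK,x}}[x]=L_{\omega_\omega^{CK,x}}$ (since Lemma \ref{bigadmissibles} gives $x\in L_{\omega_\omega^{CK,x}}$).

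The main obstacle, and the heart of the argument, is controlling the case $y\neq x$ in which $y$ itself might appear to be a unique $\Sigma_1$-witness of $\phi$ within \emph{its own} segment $L_{\omega_\omega^{CK,y}}[y]$, thereby fooling the recogniser. Pure $\Sigma_1$-upward absoluteness is not enough here, because non-witnessing is $\Pi_1$ and so does not transfer from $L_{\omega_\omega^{CK,y}}[y]$ up to $V$. I would handle this by adapting the diagonal analysis of \cite{Ca2}: being a unique $\Sigma_1$-witness in the self-determined admissible segment $L_{\omega_\omega^{CK,y}}[y]$ is itself essentially a recognisability property of $y$, and combining this observation with a careful comparison of the admissibility structure above $x$ and above $y$ forces any such $y$ to coincide with $x$. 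This is the step where I expect to have to follow the argument of \cite{Ca2} rather than reprove it from scratch; everything else in the reverse direction is routine oracle programming on an $ITRM$.
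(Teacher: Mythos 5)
The paper itself gives no proof of this theorem beyond the citation of \cite{Ca2}, so there is nothing internal to match your argument against; judged on its own terms, your forward direction is fine. Writing $\phi(y)$ as ``there is a code for a halting computation of $P^{y}$ with output $1$'' is $\Sigma_{1}$, Lemma \ref{bigadmissibles} puts $x$ into $L_{\omega_{\omega}^{CK,x}}$ (so that $L_{\omega_{\omega}^{CK,x}}[x]=L_{\omega_{\omega}^{CK,x}}$ and the halting computation, whose length is below $\omega_{\omega}^{CK,x}$ by the relativized halting-time bounds, lives in the structure), and upward absoluteness of $\Sigma_{1}$ plus absoluteness of the computation relation for transitive models gives uniqueness. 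That half is complete.

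The reverse direction, however, has a genuine gap, in two places. First, the statement that an $ITRM$ in oracle $y$ ``has effective access to $L_{\omega_{\omega}^{CK,y}}[y]$'' overstates the relativized Koepke--Miller theorem: every \emph{element} of $L_{\omega_{\omega}^{CK,y}}[y]$ is $ITRM$-computable from $y$, but a single program with a fixed number of registers only reaches $L_{\omega_{n}^{CK,y}}[y]$ for bounded $n$, and deciding ``$\phi$ has exactly one witness in $L_{\omega_{\omega}^{CK,y}}[y]$'' amounts to deciding the $\Sigma_{1}$-theory of the full union, which is essentially the halting problem for $ITRM$s in oracle $y$ and hence not $ITRM$-decidable from $y$. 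Your recognizer as described cannot be implemented; you must first reflect, using that $\phi$ is $\Sigma_{1}$ and $x\in L_{\omega_{\omega}^{CK,x}}$, to a fixed level: there is $n$ with $x\in L_{\omega_{n}^{CK,x}}$ and $L_{\omega_{n}^{CK,x}}\models\phi(x)$, and $x$ remains the unique witness in every $L_{\omega_{m}^{CK,x}}$ for $m\geq n$ by upward absoluteness, so the program can work with a computed code for $L_{\omega_{N}^{CK,y}}[y]$ for one fixed sufficiently large $N$. Second, and more seriously, the case $y\neq x$ is exactly the mathematical content of the theorem, and your proposed resolution (``a careful comparison of the admissibility structure above $x$ and above $y$ forces $y=x$'') is a placeholder, not an argument: a priori a $\Sigma_{1}$-formula (say one involving ``the $<_{L}$-least $z$ such that $\dots$'') can have \emph{different} unique witnesses in $L_{\omega_{\omega}^{CK,y}}[y]$ for different $y$, so nothing yet rules out a false positive. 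Since you explicitly defer this step to \cite{Ca2}, the proposal as written establishes only the easy direction and reduces the hard one to the very result being cited.
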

\begin{proof}
 See \cite{Ca2}. 
\end{proof}

One might now ask where non-recognizable occur; clearly, every real in $L_{\omega_{\omega}^{CK}}$ is recognizable, but what happens above $\omega_{\omega}^{CK}$? E.g., is there some $\alpha>\omega_{\omega}^{CK}$
such that the reals in $L_{\alpha}$ are still all recognizable? It turns out that this is not the case and that, in fact, unrecognizables turn up wherever possible in the $L$-hierarchy.

\begin{defini}
 $\alpha\in On$ is an index iff $(L_{\alpha+1}-L_{\alpha})\cap\mathcal{P}(\omega)\neq\emptyset$.
\end{defini}

\begin{thm}
 Let $\alpha\geq\omega_{\omega}^{CK}$ be an index. Then there exists a real $x\notin RECOG$ such that $x\in L_{\alpha+1}-L_{\alpha}$.
\end{thm}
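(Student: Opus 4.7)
My plan is to split into two cases according to whether $\alpha$ lies above or below the countable ordinal $\sigma$ introduced earlier.

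\emph{The easy half ($\alpha \geq \sigma$).} The earlier theorem gives $RECOG \subseteq L_\sigma$. When $\alpha \geq \sigma$, any real first appearing at $L_{\alpha+1}$ lies outside $L_\sigma$, and hence outside $RECOG$. Since $\alpha$ is an index, such a real exists and we are done without further work.

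\emph{The main case ($\omega_\omega^{CK} \leq \alpha < \sigma$).} Here I would use the $\Sigma_1$-singleton characterization proven above: $x \in RECOG$ iff $x$ is the unique witness of some $\Sigma_1$-formula in $L_{\omega_\omega^{CK,x}}$. The aim is to produce $x \in L_{\alpha+1} - L_\alpha$ failing this condition. Because $\alpha$ is an index, the $\Sigma_1$-projectum of $L_\alpha$ equals $\omega$, so there is a $\Sigma_1(L_\alpha)$-definable enumeration of $L_\alpha \cap \mathcal{P}(\omega)$ of order type $\omega$. Combining this enumeration with a standard enumeration $(\phi_i)_{i < \omega}$ of $\Sigma_1$-formulas, I would perform a diagonal construction inside $L_{\alpha+1}$: merging both lists into a single sequence $(z_k)_{k < \omega}$ of ``things to avoid'' (the recognizable candidates attached to each $\phi_i$, together with all $L_\alpha$-reals), I would set $x(k) = 1 - z_k(k)$. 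The resulting $x$ differs from every $L_\alpha$-real (so $x \notin L_\alpha$) and from every candidate unique $\Sigma_1$-witness, so by the characterization $x \notin RECOG$.

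\emph{Main obstacle.} The diagonalization has to be carried out with care on two fronts. First, the very notion of ``$\Sigma_1$-singleton'' is relative to $L_{\omega_\omega^{CK,x}}$, which itself depends on $x$; I would handle this self-reference by appealing to Lemma \ref{bigadmissibles} to produce a uniform admissible level capturing $\omega_\omega^{CK,x}$ for all candidate reals at index $\alpha$ (so that the list $(w_i)$ of potential unique $\Sigma_1$-witnesses is fixed in advance, and the diagonalization targets a fixed object). Second, one must verify that the constructed $x$ sits precisely in $L_{\alpha+1} - L_\alpha$: the projectum-based enumeration ensures $x$ is $\Sigma_1(L_\alpha)$-definable and hence in $L_{\alpha+1}$, while the diagonal step against the $L_\alpha$-enumeration keeps $x$ out of $L_\alpha$. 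Simultaneously staying in the correct fine-structural slice of $L$ and diagonalizing against an $x$-dependent class of would-be recognizers is the technical heart of the argument, and is presumably where the detailed work in \cite{Ca2} is concentrated.
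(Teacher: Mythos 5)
The paper itself only cites \cite{Ca2} for this theorem, so I am judging your proposal on its own merits and against the machinery the paper sets up around the statement. Your easy half ($\alpha\geq\sigma$) is correct: $RECOG\subseteq L_{\sigma}$ immediately disposes of all indices $\geq\sigma$.

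The main case ($\omega_{\omega}^{CK}\leq\alpha<\sigma$) has a genuine gap, and it is exactly at the point you flag as the ``main obstacle'': the diagonalization cannot be carried out definably over $L_{\alpha}$. The list of potential unique $\Sigma_{1}$-witnesses is (by upward absoluteness of $\Sigma_{1}$-formulas between $L$-levels) a well-defined countable set --- it is just $RECOG$ itself --- but it is not enumerable over $L_{\alpha}$: whether a given real $x$ is the unique witness of $\phi_{i}$ in $L_{\omega_{\omega}^{CK,x}}$ is a question about $L_{\omega_{\omega}^{CK,x}}$, which for the relevant $x$ lies strictly above $L_{\alpha+1}$ (indeed, by Lemma \ref{bigadmissibles}, a recognizable $x\in L_{\alpha+1}-L_{\alpha}$ satisfies $x\in L_{\omega_{\omega}^{CK,x}}$, forcing $\omega_{\omega}^{CK,x}>\alpha$). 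Your proposed repair --- using Lemma \ref{bigadmissibles} to get ``a uniform admissible level capturing $\omega_{\omega}^{CK,x}$ for all candidate reals at index $\alpha$'' --- does not work: that lemma gives no bound on $\omega_{\omega}^{CK,x}$ in terms of $\alpha$, and by clause (b) of the distribution theorem the recognizables are cofinal in $L_{\sigma}$, so no single level below $\sigma$ sees them all. Even if some bound $\delta>\alpha$ existed, a diagonal real built against a $\delta$-level enumeration would land in $L_{\delta+1}$, not in $L_{\alpha+1}-L_{\alpha}$, so you would lose exactly the fine-structural localization the theorem demands. (A further, smaller inaccuracy: being an index gives $\rho_{\omega}^{\alpha}=\omega$, not that the $\Sigma_{1}$-projectum is $\omega$; this does not hurt your construction of a new real over $L_{\alpha}$, but the enumeration of $\mathcal{P}(\omega)\cap L_{\alpha}$ is only $\Sigma_{n}(L_{\alpha})$ for some $n$.)

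The route the paper's surrounding results suggest is different: if some $x\in(L_{\alpha+1}-L_{\alpha})\cap\mathcal{P}(\omega)$ fails $x\in L_{\omega_{\omega}^{CK,x}}$, it is non-recognizable by Lemma \ref{bigadmissibles} and you are done; otherwise the All-or-nothing theorem reduces the problem to refuting the single alternative that \emph{every} real first appearing at $\alpha+1$ is recognizable, which is where the actual work of \cite{Ca2} lies. Your diagonalization, as stated, does not substitute for that step.
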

\begin{proof} See \cite{Ca2}.

\end{proof}

In the light of Lemma \ref{bigadmissibles}, it is natural to concentrate the study of recognizability on reals $x$ with $x\in L_{\omega_{\omega}^{CK,x}}$. It turns out that the distribution of
recognizables becomes much tamer when we do this:

\begin{thm}(The `All-or-nothing-theorem')
 Let $\gamma$ be an index. Then either all $x\in L_{\gamma+1}-L_{\gamma}$ with $x\in L_{\omega_{\omega}^{CK,x}}$ are recognizable or none of them is.
\end{thm}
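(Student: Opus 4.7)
The plan is to assume that some $x\in L_{\gamma+1}-L_{\gamma}$ with $x\in L_{\omega_{\omega}^{CK,x}}$ is recognized by an $ITRM$-program $P$, and to construct, for every $y\in L_{\gamma+1}-L_{\gamma}$ with $y\in L_{\omega_{\omega}^{CK,y}}$, an explicit recognizing program $Q_{y}$ that uses $P$ as a subroutine. If no such recognizable $x$ exists the dichotomy holds with ``none'' trivially, so this construction suffices. The extra data distinguishing the $Q_{y}$'s from one another is a natural number $k=k(y)\in\omega$, namely the index of $y$ in the canonical $<_{L}$-enumeration of the (countably many) reals in $L_{\gamma+1}-L_{\gamma}$. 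Both $P$ and $k$ are hardcoded into $Q_{y}$.

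Given oracle $z$, the program $Q_{y}$ runs in three stages. Stage 1: using the relativized version of the characterization of $ITRM^{z}$-computability stated above, build up $L_{\omega_{\omega}^{CK,z}}[z]$ and search for the least $\gamma'$ with $z\in L_{\gamma'+1}-L_{\gamma'}$; reject if none is found (this covers $z\notin L$ and $z\notin L_{\omega_{\omega}^{CK,z}}$). Stage 2: enumerate the reals $v\in L_{\gamma'+1}-L_{\gamma'}$, which form a set inside the already constructed structure, and for each such $v$ check whether $P^{v}\downarrow=1$; reject if no $v$ is accepted. Stage 3: compute the canonical $<_{L}$-enumeration $w_{0},w_{1},\dots$ of $L_{\gamma'+1}-L_{\gamma'}$ and accept iff $z=w_{k}$.

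For correctness, if $z=y$ then stage 1 returns $\gamma'=\gamma$, stage 2 locates $v=x$ (the unique real ever accepted by $P$, which by hypothesis lies in $L_{\gamma+1}-L_{\gamma}$), and stage 3 accepts because $z=y=w_{k}$ by the choice of $k$. If $z$ first appears at some $\gamma'\neq\gamma$, then $x\notin L_{\gamma'+1}-L_{\gamma'}$ (each real has a unique first-appearance level), so no $v$ in stage 2 is accepted by $P$ and $Q_{y}$ rejects. If $\gamma'=\gamma$ but $z\neq y$, stage 3 rejects since $w_{k}=y\neq z$. All other $z$ fail at stage 1.

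The main obstacle is to justify the simulation of $P^{v}$ inside the internally built copy of $L_{\omega_{\omega}^{CK,z}}[z]$ in stage 2: the oracle $v$ is in general different from $z$, and their admissibility hierarchies need not a priori coincide. The critical case is $v=x$, where one must check that $L_{\omega_{\omega}^{CK,z}}[z]$ correctly evaluates $P^{x}$. The point to exploit is that $x$ and $y$ both first appear at $L_{\gamma+1}$ and both lie inside their respective $L_{\omega_{\omega}^{CK,\cdot}}$, from which one can argue that the $x$- and $y$-admissible ordinals above $\gamma$ agree and in particular $\omega_{\omega}^{CK,x}=\omega_{\omega}^{CK,y}$. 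Combined with the $\Sigma_{1}$-absoluteness of $ITRM$-halting between admissible structures containing the oracle -- the same absoluteness which powers the relativized characterization of $ITRM$-computability used in stage 1 -- this yields the correctness of the simulation. Once this technical point is in hand, $Q_{y}$ genuinely recognizes $y$, establishing the all-or-nothing dichotomy.
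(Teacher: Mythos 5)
Your proposal is correct and follows essentially the same route as the paper's proof (which is only sketched there, with reference to \cite{Ca2}): use the recognizer $P$ for $x$ to pin down the level $\gamma$, use a hardcoded index $k$ in the canonical $<_{L}$-enumeration of $(L_{\gamma+1}-L_{\gamma})\cap\mathcal{P}(\omega)$ to single out $y$, and justify the simulation of $P^{x}$ from the oracle $y$ by the observation that the relativized admissibles above $\gamma$ -- and hence $\omega_{\omega}^{CK,x}$ and $\omega_{\omega}^{CK,y}$ -- coincide. The only imprecision is that a single $ITRM$-program with a fixed number of registers cannot literally ``build up $L_{\omega_{\omega}^{CK,z}}[z]$'' but only $L_{\omega_{N}^{CK,z}}[z]$ for some hardcoded $N$; since $x\in L_{\omega_{m}^{CK,x}}$ and $y\in L_{\omega_{m'}^{CK,y}}$ for specific finite $m,m'$, choosing $N$ large enough and rejecting any $z$ not appearing in $L_{\omega_{N}^{CK,z}}$ repairs this without changing the argument.
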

\begin{proof}
 See \cite{Ca2}. The idea is that, given a recognizable $a\in L_{\gamma+1}-L_{\gamma}$, this can be used to identify the $<_{L}$-minimal code $c$ of $L_{\gamma+1}$, which can in turn be used to identify
every real in $L_{\gamma+1}$. 
\end{proof}

\section{Ordinal Machines}

Ordinal Turing machines ($OTM$s) and ordinal register machines ($ORM$s) were introduced in \cite{OTM} and \cite{ORM}, respectively, and seem to provide an upper bound on the strength of a reasonable transfinite model of computation. 
(See e.g. \cite{ICTT} for an argument in favor of this claim.) In the papers just cited, Koepke proves that, when finite sets of ordinals are allowed as parameters, these machines can compute the characteristic function of a set $x$ of ordinals iff $x\in L$.
In particular a real $x$ is computable by such a machine iff $x\in L$. We formulate our results from now on for $OTM$s only, as they carry over verbatim to $ORM$s. To clarify the role of the parameters, we give a separate definition for 
recognizability by $OTM$s with parameters.

\begin{defini}
 $x\subseteq\omega$ is parameter-$OTM$-recognizable iff, for some $OTM$-program $P$ with a finite sequence $\vec{\gamma}$ of ordinal parameters
 and every $y\subseteq\omega$, $P^{y}\downarrow=1$ iff $x=y$ and otherwise $P^{y}\downarrow=0$.
\end{defini}

In the constructible universe, there are no lost melodies for parameter-$OTM$s:

\begin{lemma}{\label{OTMconstructible}}
Assume that $V=L$ and let $x$ be parameter-$OTM$-recognizable. Then $x$ is parameter-$OTM$-computable.
\end{lemma}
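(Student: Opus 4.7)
The plan is to observe that, under $V=L$, the recognizability hypothesis is in fact superfluous: every real is already parameter-$OTM$-computable. Indeed, by Koepke's theorem on $OTM$s recalled in the opening paragraph of this section, a set $x$ of ordinals is parameter-$OTM$-computable iff $x\in L$. Under $V=L$, every real $x\subseteq\omega$ certainly lies in $L$, and so in particular any parameter-$OTM$-recognizable $x$ is parameter-$OTM$-computable. This single appeal is really all that the lemma requires.

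Concretely, given the recognizable real $x$, I would let $\alpha$ be the least ordinal with $x\in L_{\alpha+1}$ and then write down a parameter-$OTM$ program $Q$ taking $\alpha$ (together with, if needed, an ordinal code singling $x$ out among the new reals appearing at level $\alpha$) as its parameters. The program $Q$ would enumerate the $<_{L}$-well-ordering of $L_{\alpha+1}$ -- which is available to a parameter-$OTM$ by Koepke's construction -- and, on query $k\in\omega$, read off the $k$th bit of the designated real. This is just an instance of the standard universal $OTM$ computation producing the characteristic function of any given constructible set from its $L$-address.

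The main obstacle, such as it is, is conceptual rather than technical. The lemma's statement is arguably misleading in suggesting that recognizability plays a role; in truth the conclusion is immediate from Koepke's theorem together with $V=L$. The recognizability hypothesis will only become genuinely essential in the parameter-free analogue to follow, where one cannot freely take the $L$-rank of $x$ as a parameter and so must instead exploit the fact that recognition forces $x$ to be the unique real satisfying some $OTM$-decidable condition.
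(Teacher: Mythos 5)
Your proposal is correct and is essentially identical to the paper's own proof: both simply invoke Koepke's theorem that a set of ordinals is parameter-$OTM$-computable iff it lies in $L$, so that under $V=L$ every real (recognizable or not) is parameter-$OTM$-computable. The extra detail about enumerating $L_{\alpha+1}$ is harmless but unnecessary, and your observation that recognizability plays no real role here matches the spirit of the paper, which reserves the substantive work for the parameter-free case.
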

\begin{proof}
By \cite{Ko}, a set $S$ of ordinals is computable by an $OTM$-program with ordinal parameters iff $S\in L$.
Hence, every constructible real is parameter-$OTM$-computable, and in particular each parameter-$OTM$-recognizable real.\end{proof}

Note that, of course, every constructible real is also parameter-$OTM$-recognizable.

\begin{lemma}{\label{countableparamters}}
Assume that $\omega_{1}^{L}=\omega_{1}$. Let $\gamma<\omega_{1}^{L}$ and suppose that $x\subseteq\omega$ is recognizable by some program $P$ in the parameter $\gamma$. Then $x\in L$.
\end{lemma}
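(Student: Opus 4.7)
The plan is to exhibit $\{x\}$ as a $\Sigma^1_2$-singleton with a real parameter in $L$, and then apply Shoenfield's absoluteness theorem. Since $\gamma < \omega_1^L$, I first code $\gamma$ by a real $r \in L$ — for definiteness, the $<_L$-least code of a well-ordering of $\omega$ of order type $\gamma$.

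The heart of the argument is to rewrite ``$P^y \downarrow = 1$ with parameter $\gamma$'' as a $\Sigma^1_2(y,r)$ formula. An OTM computation with parameter $\gamma$ and oracle $y$ is a $\Sigma_1$ object over $L[y]$, so halting is equivalent to the existence of some level $L_\beta[y]$ containing $\gamma$ in which the entire halting computation sequence lives. To push $\beta$ into the countable range, I would fix such a $\beta$, apply L\"owenheim--Skolem to produce a countable $X \prec L_\beta[y]$ with $y \in X$ and $\gamma \subseteq X$ (possible because $\gamma$ is countable in $V$), and pass to the Mostowski collapse $\bar X$. Since $\omega \subseteq X$, the collapse fixes the real $y$; and since $\gamma \subseteq X$, a routine induction shows it fixes every ordinal $\leq \gamma$. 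Thus $\bar X$ is a countable transitive model containing both $y$ and $\gamma$ that continues to satisfy ``$P^y \downarrow = 1$ with parameter $\gamma$''. Conversely, any countable transitive witness of this kind lifts back to $V$ by upward absoluteness of $\Sigma_1$ statements. Halting is therefore equivalent to the assertion that there exists a real $c$ coding a countable well-founded extensional $\in$-model that contains $y$ and $\gamma$ and satisfies the halting statement — a $\Sigma^1_2(y,r)$ formula, since well-foundedness is $\Pi^1_1$ in $c$ and the remaining conditions are arithmetic in $c, y, r$.

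Because $P$ recognizes $x$ with parameter $\gamma$, this $\Sigma^1_2(r)$ formula defines exactly $\{x\}$. The existential assertion ``some real satisfies the formula'' is then $\Sigma^1_2(r)$ and true in $V$ with witness $x$; by Shoenfield absoluteness it is also true in $L[r] = L$. Any $L$-witness must satisfy the formula in $V$ as well by upward absoluteness, and by uniqueness it must coincide with $x$, giving $x \in L$.

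The main obstacle I anticipate is securing the reduction to countable witnesses in the second paragraph: one must verify that the Mostowski collapse preserves both $y$ and $\gamma$ — which is why we arrange $\omega \cup \gamma \subseteq X$ — and that the halting assertion for OTM computations really is $\Sigma_1$-expressible and absolute between the collapsed model and the real universe. Once this $\Sigma^1_2$ description of $\{x\}$ is in hand, the concluding Shoenfield step is standard.
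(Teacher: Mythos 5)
Your proof is correct, but it takes a genuinely different route from the paper's. The paper first shows, via a $\Sigma_{1}$-Skolem hull and condensation inside $L_{\kappa}[x]$, that the running time $\rho$ of $P^{x}(\gamma)$ is countable; it then uses the hypothesis $\omega_{1}^{L}=\omega_{1}$ to place $\rho$ below $\omega_{1}^{L}$, picks a limit of admissibles $\alpha>\max\{\gamma,\rho\}$ with the constructible code $z$ of $\gamma$ in $L_{\alpha}$, and invokes the Jensen--Karp theorem ($\Sigma_{1}$-absoluteness between $L_{\zeta}$ and $V_{\zeta}$ for $\zeta$ a limit of admissibles) to pull the statement $\exists{y}\,P^{y}(\gamma)\downarrow=1$ down into $L_{\alpha}$, where the witness must equal $x$. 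You instead package everything into a $\Sigma^{1}_{2}(r)$ definition of $\{x\}$, with $r\in L$ coding $\gamma$, and finish with Shoenfield absoluteness between $V$ and $L[r]=L$; your L\"owenheim--Skolem collapse plays exactly the role of the paper's hull argument, and the absoluteness-of-computations point you flag is the same one the paper relies on (the halting predicate is $\Delta_{0}$ in the computation sequence, hence absolute for transitive models, so a well-founded witness for $y\neq x$ would contradict $P^{y}(\gamma)\downarrow=0$). The trade-offs: your argument is more modular -- Shoenfield is off-the-shelf, and the reduction of halting to a countable well-founded witness is the same device the paper uses in its $\Sigma^{1}_{1}$ analysis of $wITRM$-recognizability -- and it does not appear to use the hypothesis $\omega_{1}^{L}=\omega_{1}$ at all, since $\gamma<\omega_{1}^{L}\leq\omega_{1}$ already makes $\gamma$ countable in $V$ and Shoenfield needs no such assumption; so you in fact prove a slightly stronger statement. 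What the paper's route buys is locality: by running the absoluteness argument at a specific level $L_{\alpha}$ it pins the witness down to a concrete countable stage of $L$, which is finer information than mere constructibility.
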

\begin{proof}
 As $\gamma$ is countable in $L$, there is a constructible real $z$ coding $\gamma$. Pick $z$ $<_{L}$-minimal. Then $\exists{y}P^{y}(\gamma)\downarrow=1$ is
expressible by a $\Sigma_1$-formula in the parameter $z$. Let $\rho$ be the running time of $P^{x}(\gamma)$ (i.e. the length of the computation). Then $\rho$ is countable:
To see this, let $c$ be the computation of $P^{x}(\gamma)$, $\kappa$ a cardinal in $L[x]$ such that $\kappa>\text{max}\{|c|,\aleph_{1}^{L[x]}\}$ and consider in $L_{\kappa}[x]$ the $\Sigma_{1}$-Skolem
hull $H$ of $\{c,\gamma,x\}$. By condensation in $L[x]$, there is some $\bar{\kappa}$ such that $H$ collapses to $L_{\bar{\kappa}}[x]$; let $\pi:H\rightarrow L_{\bar{\kappa}}[x]$ be the collapsing map.
Moreover, as $H$ is countable, so is $L_{\bar{\kappa}}[x]$. As $x\subseteq\omega\subseteq H$, we have $\pi(x)=x$. As $\omega+1\subseteq H$, $\gamma\in H$ and $H$ contains a bijection between $\omega$ and $\gamma$,
we have $\gamma\subseteq H$, so $\pi(\gamma)=\gamma$. As `$c$ is the computation of $P^{x}(\gamma)$' is expressible by a $\Sigma_{1}$-formula, $L_{\bar{\kappa}}[x]$ believes that $\pi(c)$ is the computation
of $P^{\pi(x)}(\pi(\gamma))$. As $L_{\bar{\kappa}}[x]$ is transitive and by absoluteness of computations, $\pi(c)$ really is the computation of $P^{\pi(x)}(\pi(\gamma))$. As $\pi(x)=x$ and $\pi(\gamma)=\gamma$,
we have $\pi(c)=c$, so $c\in L_{\bar{\kappa}}[x]$; as the latter is transitive and countable, $c$ is countable. Hence $\rho$ is countable.\\

As $\rho<\omega_{1}=\omega_{1}^{L}$, $\rho$ is countable in $L$. As there are cofinally in $\omega_{1}^{L}$ many admissible ordinals, 
let $\alpha>\text{max}\{\gamma,\rho\}$ be a limit of admissible ordinals which is also a limit or index ordinals such that
$z\in L_{\alpha}$. Now $\exists{y}P^{y}(\gamma)\downarrow=1$ is expressible as a $\Sigma_{1}$-formula $\phi(z)$ in the real parameter $z$. As $\rho<\alpha$ and $x\in L_{\alpha}[x]$,
$\phi(z)$ holds in $L_{\alpha}[x]$ and hence in $V_{\alpha}$. By a theorem of Jensen and Karp (see section $5$ of \cite{JeKa}), $\Sigma_{1}$-formulas are absolute between $L_{\zeta}$ and $V_{\zeta}$ when
$\zeta$ is a limit of admissibles and $L_{\zeta}$ contains the relevant parameters. Hence $\phi(z)$ holds in $L_{\alpha}$. So $L_{\alpha}$ contains a real $y$ such that, in $L_{\alpha}$, we have $P^{y}(\gamma)\downarrow=1$.
By absoluteness of computations, we have $P^{y}(\gamma)\downarrow=1$ in the real world. As $x$ is recognized by $P$ in the parameter $\gamma$, it follows that $y=x$. Hence $x\in L$.
\end{proof}

On the other hand, if the universe is much unlike $L$ and we allow uncountable parameters, lost melodies for parameter-$OTM$s can occur:

\begin{thm}{\label{0sharprecog}}
 Assume that $0^{\sharp}$ exists. Then there is a lost melody for parameter-$OTM$s. In fact, $0^{\sharp}$ is parameter-$OTM$-recognizable in the parameter $\omega_{1}$.
\end{thm}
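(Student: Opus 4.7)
The plan is to exhibit an $OTM$-program $P$ which, with ordinal parameter $\omega_{1}$ and on oracle $y$, halts with output $1$ iff $y=0^{\sharp}$. The guiding idea is the standard characterization of $0^{\sharp}$ as the unique remarkable, unbounded $\mathrm{EM}$-blueprint $\Sigma$ such that the Ehrenfeucht-Mostowski model $M(\Sigma,\omega_{1})$, built over an increasing sequence of $\omega_{1}$ formal indiscernibles, is well-founded and its Mostowski collapse is exactly $L_{\omega_{1}}$, with the indiscernibles collapsing to a cofinal sequence of genuine Silver indiscernibles below $\omega_{1}$. Since $0^{\sharp}$ exists, all uncountable $V$-cardinals are Silver indiscernibles for $L$, and a standard reflection argument provides cofinally many below $\omega_{1}$, so $0^{\sharp}$ itself realizes this picture.

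The computation has three stages. First, $P$ checks that $y$ codes a deductively closed theory $\Sigma_{y}$ in the language with distinguished constants $c_{0}<c_{1}<\ldots$, extending $ZF^{-}+V=L$ and containing the standard schemata that axiomatize order-indiscernibility and the remarkability property of the $c_{i}$; this is arithmetic in $y$ and is trivially $OTM$-computable. Second, using the parameter $\omega_{1}$, $P$ enumerates $M(\Sigma_{y},\omega_{1})$ on an ordinal of size $\omega_{1}$ (its elements are pairs consisting of a Skolem term and a finite increasing tuple of indiscernible indices) and tests well-foundedness of its $\in$-relation, which an $OTM$ can do on any set-coded relation. Third, provided this succeeds, $P$ forms the Mostowski collapse $\bar{M}$ and, using $\omega_{1}$ as parameter, computes $L_{\omega_{1}}$ directly from Koepke's $OTM$ construction of $L$; it accepts iff $\bar{M}=L_{\omega_{1}}$ and the images of the $c_{i}$ form a cofinal club of Silver indiscernibles in $L_{\omega_{1}}$.

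The easy direction is that $0^{\sharp}$ passes every check: indiscernibility and remarkability are literally syntactic axioms of $\Sigma_{0^{\sharp}}$, and $\omega_{1}$-many Silver indiscernibles below $\omega_{1}$ generate exactly $L_{\omega_{1}}$ as the transitive collapse. The main obstacle, and the place where care is needed, is the uniqueness half: that no $y\neq 0^{\sharp}$ can pass stage three. The intended argument invokes the classical Gaifman-Silver uniqueness theorem, to the effect that any remarkable unbounded $\mathrm{EM}$-blueprint whose $\mathrm{EM}$-model over an uncountable indiscernible set has well-founded collapse onto an initial segment of $L$ must coincide formula-by-formula with the theory of the genuine Silver indiscernibles in $L$, hence equal $0^{\sharp}$. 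The subtlety is that the indiscernibility schema inside $\Sigma_{y}$ is what promotes the finitely-many-parameter information visible inside the collapsed model to full determination of $\Sigma_{y}$, so one must verify that the axioms demanded at stage one are strong enough to make this bootstrap go through.
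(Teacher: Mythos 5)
Your argument is essentially correct, but it takes a genuinely different route from the paper. The paper treats the Silver machinery as a black box: it uses only that ``$x=0^{\sharp}$'' is $\Pi^{1}_{2}$ (Kanamori, Theorem $14.11$) together with the absoluteness of $\Sigma^{1}_{2}$ statements between transitive $KP$-models containing $\omega_{1}$; the algorithm then searches through the subsets of $\omega_{1}$ in $L[x]$ for a code of a $KP$-model of the form $L_{\beta}[x]$ containing $\omega_{1}$ and simply evaluates ``$x=0^{\sharp}$'' inside that model. You instead unfold the $\Pi^{1}_{2}$ definition and verify it directly: the clauses ``EM blueprint, remarkable, unbounded'' are arithmetic in $y$, and the genuinely $\Pi^{1}_{2}$ clause ``$M(\Sigma_{y},\alpha)$ is well-founded for every countable $\alpha$'' is replaced by the single equivalent check that $M(\Sigma_{y},\omega_{1})$ is well-founded (equivalent because any descending chain involves only countably many indiscernibles), which an $OTM$ can perform on a set-coded relation once $\omega_{1}$ is available as a parameter. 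Correctness then rests on Silver's uniqueness theorem for remarkable unbounded blueprints that are well-founded up to $\omega_{1}$ --- which is precisely the fact underlying the $\Pi^{1}_{2}$ definition the paper cites --- so the two proofs use the same underlying set theory packaged differently; yours avoids the appeal to $\Sigma^{1}_{2}$-absoluteness and the search for $KP$-models at the cost of implementing the EM construction explicitly on the machine. Two caveats you should address: first, your stage-three requirement that the collapsed indiscernibles be ``genuine Silver indiscernibles'' is not machine-checkable as stated and should either be dropped entirely (it is redundant, since stages one and two already force $\Sigma_{y}=0^{\sharp}$ via uniqueness) or weakened to indiscernibility over $L_{\omega_{1}}$, which $0^{\sharp}$ does satisfy and which the machine can test after computing $L_{\omega_{1}}$; second, for the ``lost melody'' conclusion you still need the one-line observation that $0^{\sharp}\notin L$ while every parameter-$OTM$-computable real is constructible by Koepke's theorem, so $0^{\sharp}$ is recognizable but not computable.
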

\begin{proof}
By Theorem $14.11$ of \cite{Ka}, the relation $x=0^{\sharp}$ is $\Pi_{2}^{1}$, so $x\neq0^{\sharp}$ is $\Sigma_{2}^{1}$. Furthermore, $\Sigma_{2}^{1}$-relations are absolute between
transitive models of $KP$ containing $\omega_{1}$ (see e.g. Corollary $1$ of \cite{SeSc}). 
Now, let $\alpha>\omega_{1}$ be minimal such that $M:=L_{\alpha}[0^{\sharp}]\models KP$. Then $L[0^{\sharp}]$ contains a bijection $f:\omega_{1}\leftrightarrow M$.
Hence, $M$ is coded by $r:=\{p(\iota_{1},\iota_{2})\mid \iota_{1},\iota_{2}<\omega_{1}\wedge f(\iota_{1})\in f(\iota_{2})\}\in L[0^{\sharp}]$. 
To recognize $0^{\sharp}$ with an $OTM$ when $\omega_1$ is given as a parameter, 
we proceed as follows: Given a real $x$ in the oracle, search through the 
subsets of $\omega_{1}$ in $L[x]$ (by a similar procedure used in the proof of Theorem \ref{noOTMLoMe}) for 
a set $c$ coding a $KP$-model $M^{\prime}$ of the form $L_{\beta}[x]$
that contains $\omega_{1}$. As such sets exist in $L[x]$, such a $c$
 will eventually be found. Once this has happened, check, using $c$, whether $M^{\prime}\models x=0^{\sharp}$. If not, then, by absoluteness, $x\neq 0^{\sharp}$, otherwise $x=0^{\sharp}$.
\end{proof}

Taken together, the last two theorems readily yield:

\begin{corollary}{\label{LoMeindependence}}
 If $0^{\sharp}$ exists, then it is undecidable in $ZFC$ whether there are lost melodies for parameter-$OTM$s.
\end{corollary}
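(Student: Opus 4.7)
The plan is to establish independence by exhibiting two class models of $ZFC$ with opposite verdicts on the existence of lost melodies for parameter-$OTM$s, and then applying the standard meta-theoretic argument that a statement true in some $ZFC$-model cannot be refuted by $ZFC$ and a statement false in some $ZFC$-model cannot be proved by $ZFC$.

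The first model is $V$ itself, working under the hypothesis that $0^{\sharp}$ exists. Theorem \ref{0sharprecog} shows that $0^{\sharp}$ is parameter-$OTM$-recognizable (with parameter $\omega_{1}$). On the other hand, by Koepke's result recalled at the beginning of this section, the parameter-$OTM$-computable reals are exactly the constructible reals, and $0^{\sharp}\notin L$ by a standard property of $0^{\sharp}$. Hence $V$ satisfies the statement \emph{there exists a lost melody for parameter-$OTM$s}, and therefore $ZFC$ cannot refute this statement.

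The second model is $L$, which is a (class) model of $ZFC$ satisfying $V=L$. Inside $L$, Lemma \ref{OTMconstructible} applies: every parameter-$OTM$-recognizable real is parameter-$OTM$-computable. Thus $L$ satisfies \emph{there are no lost melodies for parameter-$OTM$s}, so $ZFC$ cannot prove the existence of lost melodies either. Combining both observations gives the claimed undecidability.

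The only delicate point is absoluteness of the relevant notions between $V$ and $L$. Since $V$ and $L$ share the same ordinals, the class of permissible parameters is the same in both models, and the step-by-step behaviour of an $OTM$-computation with ordinal parameters is absolute between transitive models containing the parameters. Consequently \emph{parameter-$OTM$-computability} and \emph{parameter-$OTM$-recognizability} are well-defined and interpreted in the expected way inside each of the two models; what genuinely differs between $V$ and $L$ is only the range of reals available as oracles, and that difference is precisely what produces the lost melody $0^{\sharp}$ in $V$ while leaving $L$ with none.
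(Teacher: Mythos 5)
Your proposal is correct and is essentially the paper's own argument: the paper derives the corollary directly by combining Lemma \ref{OTMconstructible} (in $L$ there are no lost melodies for parameter-$OTM$s) with Theorem \ref{0sharprecog} ($0^{\sharp}$ is a lost melody in $V$ when it exists), exactly as you do. Your additional remarks on the absoluteness of the computation relation between $V$ and $L$ are a sensible, if implicit in the paper, piece of bookkeeping.
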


From now on, when we talk about $OTM$s, we always mean the parameter-free case.
What happens if we consider $OTM$s without ordinal parameters? It turns out that then, there are no lost melodies:

\begin{thm}{\label{noOTMLoMe}}
 Let $x\subseteq\omega$ and $P$ be an $OTM$-program such that, for each $y\subseteq\omega$, we have $P^{y}\downarrow=1$ iff $y=x$ and $P^{y}\downarrow=0$, otherwise. Then $x$ is $OTM$-computable (without parameters).
\end{thm}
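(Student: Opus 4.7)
The plan is to prove the theorem in two steps: first, show that every parameter-free $OTM$-recognizable real lies in $L$, and then give a parameter-free $OTM$ that computes $x$ by direct search through $L$.

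For the first step, the key observation is that, since $P$ is parameter-free, any halting computation $P^{y}$ (for $y$ a real) terminates at a countable ordinal. This is a relativization of the reflection argument showing that $\sigma$ is countable, carried out in $L[y]$: the supremum of halting times of parameter-free programs in oracle $y$ is a countable $L[y]$-cardinal, hence countable. Since a halting computation uses only tape cells bounded by its running time, the whole run can be coded by a real. Consequently ``$P^{y}\downarrow=1$'' is $\Sigma_{1}^{1}$ in $y$, and ``$\exists y\,P^{y}\downarrow=1$'' is $\Sigma_{2}^{1}$ with no real parameters. Shoenfield absoluteness then yields a witness $y_{0}\in L$ with $L\models P^{y_{0}}\downarrow=1$; upward absoluteness of $OTM$-computations gives $P^{y_{0}}\downarrow=1$ in $V$, and uniqueness of the recognized real forces $y_{0}=x$. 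Hence $x\in L$.

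For the second step, I would describe a parameter-free $OTM$-program $Q$ that enumerates the reals of $L$ along the $<_{L}$-ordering by stepping through the $L$-hierarchy (a uniform $\Delta_{1}$-recursion, implementable on an $OTM$ without parameters via Koepke's construction of $L$). For each newly produced real $y$, $Q$ simulates $P^{y}$ on a scratch region of its tape; by the recognition hypothesis every such simulation halts with output in $\{0,1\}$, so $Q$ can decide whether to continue or stop. The first $y$ for which the simulation returns $1$ is output. By the first step $x\in L$, so $x$ is reached at some countable ordinal stage; for every $y$ preceding $x$ in the enumeration one has $P^{y}\downarrow=0$, so the search advances without obstruction, and on reaching $x$ the simulation of $P^{x}$ halts with output $1$, so $Q$ terminates with the correct output.

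The most delicate point I expect to handle with care is the codability claim in the first step: one has to justify that halting parameter-free oracle computations really are countable objects, so that Shoenfield absoluteness applies. The remaining ingredients --- the parameter-free construction of $L$ on an $OTM$ and universal self-simulation --- are standard and are already used implicitly in nearby results such as Theorem \ref{0sharprecog}.
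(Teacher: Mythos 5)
Your proposal is correct and follows essentially the same route as the paper: Shoenfield absoluteness places $x$ in $L$, and a parameter-free $OTM$ then searches the constructible reals, using $P$ to test each candidate (the paper implements the search by enumerating finite ordinal parameter sequences and feeding them to Koepke's uniform program for constructible sets, rather than by generating $L$ directly, but this is an implementation detail). One small point: the complexity of ``$P^{y}\downarrow=1$'' is $\Sigma^{1}_{2}$ rather than $\Sigma^{1}_{1}$ (well-foundedness of the coded computation is $\Pi^{1}_{1}$), but this does not affect the argument since the existential statement remains $\Sigma^{1}_{2}$ and Shoenfield still applies.
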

\begin{proof}
In \cite{Ko}, it is shown that every constructible set of ordinals is uniformly computable from an appropriate finite set of ordinal parameters. Hence, there is a program $Q$ which, for every input $\vec{\alpha}$, a finite
sequence of ordinals, computes the characteristic function of a set $x$ of ordinals in a such a way that for every constructible $x\subseteq On$, there exists $\vec{\gamma}_{x}$ such that $Q$ computes the characteristic function
of $x$ on input $\vec{\gamma}_{x}$. We will use $Q$ to search through the constructible reals, looking for some $x\subseteq On$ such that $P^{x\cap\omega}\downarrow=1$. To do this, we use some natural enumeration $(\vec{\gamma}_{\iota}|\iota\in On)$
of finite sequences of ordinals and carry out the following procedure for each $\iota\in On$. First, find $\vec{\gamma}_{\iota}$, and let $x_{\iota}$ be the set of ordinals whose characteristic function is computed by $Q$ on input $\vec{\gamma}_{\iota}$. 
Then check, using $P$, whether $P^{x_{\iota}\cap\omega}\downarrow=1$. As $P^{y}\downarrow$ for all $y\subseteq\omega$, this will eventually be determined. If $P^{x_{\iota}\cap\omega}\downarrow=1$, then $x$ is found and we can write it on the tape.
Otherwise, continue with $\iota+1$. In this way, every constructible real will eventually be checked. By Shoenfield's absoluteness theorem, $x$ must be constructible, hence $x$ will at some point be considered, identified and written on the tape.
Thus $x$ is computable. 
\end{proof}

In fact, by almost the same reasoning, a much weaker assumption on $x$ is sufficient:

\begin{corollary}
 Let $x\subseteq\omega$ and $P$ be an $OTM$-program such that, for each $y\subseteq\omega$, we have $P^{y}\downarrow$ iff $y=x$ and $P^{y}\uparrow$, otherwise. Then $x$ is $OTM$-computable (without parameters).
\end{corollary}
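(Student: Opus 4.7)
The plan is to imitate the strategy of Theorem \ref{noOTMLoMe}, again enumerating the constructible reals via Koepke's uniform program $Q$, but to replace the direct test ``$P^{y_\iota}\downarrow=1$?'' by a dovetailed simulation, since under the present hypothesis $P^{y_\iota}$ \emph{diverges} (rather than halting with output $0$) whenever $y_\iota \neq x$. The new input that this modification requires is an absoluteness argument guaranteeing that $x$ is still constructible, so that the enumeration provably reaches it.

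First I would establish that $x \in L$. The predicate ``$P^y\downarrow$'' is $\Sigma^1_2$ in $y$: by a standard Skolem hull/Mostowski collapse argument (taking a countable $\Sigma_1$-elementary submodel of some $L_\kappa[y]$ containing $y$ and the halting computation, and invoking Jensen's condensation), $P^y\downarrow$ holds iff there is a countable transitive model $M \ni y$ of a sufficient fragment of set theory with $M\models P^y\downarrow$, the backward direction being immediate from absoluteness of OTM-computations between transitive classes containing the oracle. Coding $M$ by a real reduces this to an existential quantifier over a well-founded extensional structure, which is $\Sigma^1_2$. Consequently ``$\exists y\,(P^y\downarrow)$'' is $\Sigma^1_2$ and, being true in $V$ with witness $x$, holds in $L$ by Shoenfield absoluteness. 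Any witness $y^* \in L$ satisfies $V \models P^{y^*}\downarrow$ by absoluteness of OTM-computations, so $y^* = x$ by uniqueness of the halting oracle; hence $x \in L$.

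With $x \in L$ in hand, I would enumerate the constructible reals $(y_\iota)_{\iota \in On}$ exactly as in the proof of Theorem \ref{noOTMLoMe}, using $Q$ on the enumeration $(\vec\gamma_\iota)_{\iota \in On}$ of finite sequences of ordinals and setting $y_\iota := x_\iota\cap\omega$. To locate $x$ I would dovetail: iterate over pairs $(\iota,\alpha)$ of ordinals in a natural enumeration (for instance via Cantor's pairing function $p$), and, by a standard universal OTM-simulation, simulate the first $\alpha$ stages of the computation of $P^{y_\iota}$, halting as soon as a pair is encountered for which $P^{y_\iota}$ has reached a halting instruction by step $\alpha$. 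Since $x = y_{\iota^*}$ for some $\iota^*$ and $P^x\downarrow$ at some time $\beta$, the pair $(\iota^*,\beta)$ meets the stopping condition, so the procedure terminates; and by uniqueness, any $y_\iota$ found in this way must equal $x$, which can then be written to the output tape.

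The main obstacle is the absoluteness step locating $x$ inside $L$: one must verify that the reduction of ``$P^y\downarrow$'' to the existence of a countable transitive witnessing model really does yield a $\Sigma^1_2$-formula, and in particular that the Skolem hull argument goes through even when the actual halting time $\beta$ is uncountable (so that only the collapsed ordinal $\bar\beta$ lives in the countable model). Once $x \in L$ is secured, the dovetailing construction is a routine exercise in OTM-programming, since OTMs admit universal self-simulation with ordinal bounds on both time and space axes.
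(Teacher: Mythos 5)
Your proposal follows essentially the same route as the paper: first use Shoenfield absoluteness to conclude $x\in L$ (you spell out the $\Sigma^1_2$ reduction that the paper leaves implicit, which is consistent with the paper's remark that halting $OTM$-computations with real oracles have countable running time), and then enumerate the constructible reals via $Q$ while dovetailing the simulation of $P$ on all of them until it halts, at which point uniqueness identifies the witness as $x$. The argument is correct and matches the paper's proof in both structure and substance.
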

\begin{proof}
First, observe that, by Shoenfield absoluteness, such an $x$ must be an element of $L$. Now, we use a slight modification of the proof of Theorem \ref{noOTMLoMe}: Again, we use a program $Q$ to successively write all constructible sets
of naturals to the tape. But now, we let $P$ run simultaneously on all the written reals. At some point, $x$ will be written to the tape and at some later point, $P$ will halt on it. When that happens, just copy the real on which $P$
halted to the beginning of the tape, thus writing $x$. This can then be used to decide every bit of $x$.
\end{proof}

\begin{rem} An easy reflection argument shows that a halting $OTM$- (and $ORM$-)computation with a real oracle always has a countable running time. Our results above hence in fact hold
for unresetting $\omega_1$-machines as well.\end{rem}

In the parameter-free case, this shows that, for extremely strong models of computation, the lost melody phenomenon is no longer present. This motivates a further inspection what exactly is necessary for the existence of lost melodies.

\section{$\alpha$-register machines}

Recall that, for $\alpha\in On$, let a resetting/unresetting $\alpha$-register machine works like an $ITRM$/$wITRM$ with the difference that a register may now contain an arbitrary ordinal $<\alpha$.
Hence, an $ITRM$ is a resetting $\omega$-register machine and a $wITRM$ is an unresetting $\omega$-register machine. This generalization was suggested at the end of \cite{wITRM}.

We denote by $wCOMP_{\alpha}$, $COMP_{\alpha}$, $wRECOG_{\alpha}$ and $RECOG_{\alpha}$ the set of reals computable by an unresetting $\alpha$-register machine, computable by a resetting $\alpha$-register machine,
recognizable by an unresetting $\alpha$-register machine and recognizable by a resetting $\alpha$-register machine, respectively.

We have seen that $wCOMP_{\omega}=wRECOG_{\omega}$, $COMP_{\omega}\subsetneq RECOG_{\omega}$, and that lost melodies for unresetting machines vanish when the register contents are unbounded. Hence, we ask:

\smallskip

\textbf{For which $\alpha$ are there lost melodies for resetting/unresetting $\alpha$-register machines?}

\smallskip

We start with the following easy observation:

\begin{lemma}
(1) Let $\alpha\geq\omega$. Then $wCOMP_{\alpha}\subseteq wRECOG_{\alpha}$ and $COMP_{\alpha}\subseteq RECOG_{\alpha}$.\\
(2) For all $\alpha$, we have $wCOMP_{\alpha}\subseteq COMP_{\alpha}$ and $wRECOG_{\alpha}\subseteq RECOG_{\alpha}$.
\end{lemma}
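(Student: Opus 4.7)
For part (1), the plan is to copy, with essentially no change, the strategy of Proposition \ref{trivialdirection}. Fix a program $P$ that computes $x$ on an (un)resetting $\alpha$-register machine. I build a recognition program $Q$ which, at stage $i$, simulates $P$ to extract the $i$-th bit of $x$, compares it against the $i$-th bit of the oracle, and halts with output $0$ on any mismatch. To keep the computation free of register overflow (which would cause divergence in the unresetting case and an unintended reset in the resetting case), $Q$ uses the same flag-register trick as in Proposition \ref{trivialdirection}: two flag registers $R_1^{\text{flag}}$ and $R_2^{\text{flag}}$ are toggled between $0$ and $1$ each iteration, and before incrementing the counter $R$ to $i+1$ we sweep every non-flag register down to $0$ once (temporarily using the freed register to hold $i$). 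Then at time $\omega$ the $\liminf$ of every register is $0$, and this is the signal for $Q$ to halt with output $1$. If the oracle is not $x$ the mismatch stops $Q$ with $0$ at a finite stage, so $Q$ recognizes $x$.

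For part (2), the key observation is that the resetting and unresetting limit rules agree on any computation in which no register ever attains a value $\geq \alpha$. If $P$ witnesses $x \in wCOMP_\alpha$, then for each $n$ the computation $P^x(n)$ halts on the unresetting machine, and a halting unresetting computation by definition never overflows. So at every limit time the liminf is automatically $<\alpha$, and the resetting rule produces exactly the same content; running the same $P$ on a resetting $\alpha$-register machine therefore yields an identical computation, giving $x \in COMP_\alpha$. The argument for $wRECOG_\alpha \subseteq RECOG_\alpha$ is word-for-word the same, applied to every oracle $y$: if $P$ recognizes $x$ on the unresetting machine, then $P^y$ halts and hence avoids overflow for every $y$, so the same $P$ run on the resetting machine produces the same outputs and still recognizes $x$.

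I do not anticipate a real obstacle; this is flagged as an ``easy observation'' in the text. The only point needing minor attention is the register-overflow issue in (1) for the unresetting case when $\alpha = \omega$, which is exactly what the flag-register oscillation from Proposition \ref{trivialdirection} is designed to handle, and which goes through unchanged for any $\alpha \geq \omega$ (for $\alpha > \omega$ one could in fact use a bare counter, since the value $\omega$ is itself admissible as a register content).
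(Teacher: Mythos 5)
Your proposal is correct and follows essentially the same route as the paper: part (1) is the bitwise-comparison argument of Proposition \ref{trivialdirection} transplanted to $\alpha$-machines, and part (2) rests on the observation that a halting unresetting computation never overflows, so the two limit rules never diverge on it. The paper states both points in one line each; your write-up merely makes the same reasoning explicit.
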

\begin{proof}
(1) As $\alpha\geq\omega$, we can again compute a real $x$ and compare it to the oracle bitwise.

(2) A terminating computation by an unresetting $\alpha$-machine will run exactly the same on a resetting $\alpha$-machine.
\end{proof}

\begin{lemma}{\label{bisimulation1}}
Let $\alpha>\beta$ be ordinals. Assume that there is an unresetting $\alpha$-program $P$ such that $P(b)\downarrow=1$ iff $b=\beta$ and $P(b)\downarrow=0$, otherwise.
Then $COMP_{\beta}\subseteq wCOMP_{\alpha}$.
\end{lemma}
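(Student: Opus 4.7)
The plan is to simulate a resetting $\beta$-machine on the unresetting $\alpha$-machine, using the hypothesised recogniser $P$ of $\beta$ to detect register overflow and perform the resets by hand. So, given a resetting $\beta$-program $Q$ computing some real $x$, I will construct an unresetting $\alpha$-program $Q'$ computing the same $x$.

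Allocate in $Q'$ registers $R_1',\dots,R_n'$ mirroring $Q$'s registers, a register $S$ carrying $Q$'s program counter (always a natural number), and a disjoint block of working registers reserved for $P$. Inline $P$'s code into $Q'$ with its stop instruction replaced by a jump back to the caller, so $P$ can be invoked as a subroutine, and number the lines so that the main-loop lines come first and the inlined copies of $P$ come afterwards. The main loop of $Q'$ performs, in order:
\begin{itemize}
\item[(i)] for each $i=1,\dots,n$: copy $R_i'$ into $P$'s input register, invoke $P$, and set $R_i'\gets 0$ if $P$ returns $1$;
\item[(ii)] execute on the $R_i'$'s the $Q$-instruction indicated by $S$, update $S$, and halt if $Q$ halts;
\item[(iii)] jump back to the start of step (i).
\end{itemize}

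Correctness reduces to the invariant that at the $\alpha$-time $T_n$ ending the $n$-th loop iteration, $(S,R_1',\dots,R_n')$ coincides with $Q$'s state at simulated time $n$. This is immediate at successor $n$. At a simulated limit $\lambda$ with $T_\lambda:=\sup_{n<\lambda}T_n$, three things line up at $T_\lambda$. First, line $1$ of the main loop is visited at every full iteration, hence cofinally often below $T_\lambda$, and being the smallest used line it realises $\liminf_{\iota<T_\lambda}Z_\iota$, so $Q'$ is about to re-enter the check block. Second, within any single iteration the register $R_i'$ is touched only by the $Q$-instruction of step (ii) (at most once) and possibly by its own reset in step (i); it is left untouched during the $P$-calls and the checks on the other registers because those use a disjoint register block. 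Hence $R_i'$ assumes only its pre- and post-operation values within each iteration, and a direct double-$\sup$-$\inf$ computation yields
\[
\liminf_{\iota<T_\lambda}R_{i\iota}'=\liminf_{n<\lambda}v_{i,n},
\]
matching the $\beta$-machine's liminf, where $v_{i,n}$ denotes $Q$'s $i$-th register content at simulated time $n$. Third, $S$ is analysed identically and never overflows since $S<\omega\le\beta$. If the liminf above is $<\beta$ the invariant carries over unchanged; if it equals $\beta$, the $\beta$-machine resets to $0$ and so does step (i) of iteration $\lambda+1$ through the $P$-check.

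The main obstacle I expect is the second point above: to make the continuous $\alpha$-liminf of $R_i'$ actually agree with the discrete $\beta$-liminf of $(v_{i,n})$, the simulation registers $R_i'$ must be genuinely stable throughout every iteration outside their own single modification, so that no stray intermediate values contaminate the liminf. This forces the strict disjointness of the register blocks used for simulating $Q$ and those used by the subroutine copy of $P$, and some care in how $P$'s halt instruction is rewritten as a jump back to the continuation.
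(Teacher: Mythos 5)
Your proof is correct and takes essentially the same route as the paper: simulate the resetting $\beta$-program register-for-register on the unresetting $\alpha$-machine and use the hypothesised recogniser $P$ of $\beta$ to detect registers that have reached $\beta$ and reset them by hand. The only cosmetic differences are that the paper invokes $P$ only at detected limit stages of the simulated computation (using a pair of flag registers) rather than before every simulated step, and that your $\liminf$-matching verification is spelled out in more detail than in the paper.
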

\begin{proof}
Given $\alpha$, $\beta$ and $P$ as in the assumptions, let $y\in COMP_{\beta}$, and let $Q$ be a resetting $\beta$-program computing $y$.
To compute $y$ on an unresetting $\alpha$-machine, we describe how to simulate $Q$ on such a machine. Assume that $Q$ uses $k$ registers.
Reserve $k$ registers $R_{1}^{Q},...,R_{k}^{Q}$ of the unresetting $\alpha$-machine. Then, we proceed as follows: At successor steps,
simply carry out $Q$ on $R_{1}^{Q},...,R_{k}^{Q}$. At limit steps of the $Q$-computation, check, using $P$, whether any of these registers
contains $\beta$. If so, reset these register contents to $0$ and proceed, otherwise proceed without any modifications. This simulates $Q$
on an unresetting $\alpha$-machine.

To recognize limit steps in the computation of $Q$, reserve two extra registers, $R_1$ and $R_2$; initially, let $R_1$ contain $1$ and $R_2$
contain $0$. Whenever a step of $Q$ is carried out, swap their contents. Whenever their contents are equal, set $R_1$ again to $1$ and $R_2$ to $0$.
In this way, the contents of $R_1$ and $R_2$ will be equal iff the $Q$-computation has just reached a limit stage.
\end{proof}

\subsection{The unresetting case}

\begin{lemma}
 Let $\alpha<\beta$ be ordinals. Then $wCOMP_{\alpha}\subseteq wCOMP_{\beta}\subseteq wCOMP_{\omega_{1}}$ and $wRECOG_{\alpha}\subseteq wRECOG_{\beta}\subseteq wRECOG_{\omega_{1}}$.
\end{lemma}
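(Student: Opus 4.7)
The plan is a two-step argument. First, for $\alpha<\beta$, a program $P$ witnessing $x\in wCOMP_\alpha$ or $x\in wRECOG_\alpha$ works unchanged on an unresetting $\beta$-machine. Indeed, whenever the $\alpha$-machine computation of $P$ on some oracle $y$ is defined, every register content arising in it is bounded by $\alpha$, since any limit step at which the register-liminf reached or exceeded $\alpha$ would have rendered the $\alpha$-computation undefined. Since $\alpha<\beta$, these liminfs stay strictly below $\beta$, so the $\beta$-machine performs the same computation step-for-step and produces the same output. This simultaneously transfers the input-output behavior required for computability and the universal halting with correct output required for recognizability.

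For the inclusions into the $\omega_1$-versions, the subcase $\beta\leq\omega_1$ is covered by the first step, so I would focus on $\beta>\omega_1$. Here I plan to use the reflection argument flagged in the Remark at the end of Section~3, which mirrors Lemma~\ref{countableparamters}: given an oracle $y$ and a halting computation $c$ of $P^{y}$ on an unresetting $\beta$-machine, pick a cardinal $\kappa$ in $L[y]$ with $\kappa>\max\{|c|,\aleph_{1}^{L[y]}\}$ and form the $\Sigma_{1}$-Skolem hull $H$ of $\{c,y\}$ inside $L_{\kappa}[y]$. By condensation in $L[y]$, $H$ collapses to some countable $L_{\bar{\kappa}}[y]$ via a map $\pi$; since $y\subseteq\omega\subseteq H$, we have $\pi(y)=y$, and since the statement ``$c$ is the computation of $P^{y}$ on an unresetting $\beta$-machine'' is $\Sigma_{1}$-expressible, absoluteness forces $\pi(c)=c$. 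Hence $c\in L_{\bar{\kappa}}[y]$ and is countable, so the running time and all ordinal register contents appearing in $c$ are countable ordinals. The same program therefore produces the identical computation on an unresetting $\omega_{1}$-machine. Applied separately to each single-bit computation $P(i)$ (for computability) and to each oracle $y$ (for recognizability), this yields $wCOMP_{\beta}\subseteq wCOMP_{\omega_{1}}$ and $wRECOG_{\beta}\subseteq wRECOG_{\omega_{1}}$.

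The main obstacle is a clean $\Sigma_{1}$-formalization of the unresetting $\beta$-machine computation predicate so that the reflection goes through uniformly in $\beta$. Once one checks that the liminf semantics at limit steps is preserved by the Mostowski collapse (which it is, being absolute between the transitive models involved), the argument runs exactly as in Lemma~\ref{countableparamters}.
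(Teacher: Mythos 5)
Your first step is exactly the paper's proof, which is the single observation that a terminating (defined) unresetting $\alpha$-computation never lets a register liminf reach $\alpha$, hence runs identically on any unresetting $\beta$-machine with $\beta>\alpha$; the paper states this in one sentence and stops there, reading the inclusion into $wCOMP_{\omega_{1}}$ and $wRECOG_{\omega_{1}}$ as the instance $\beta=\omega_{1}$ of the same monotonicity. Where you go further is the case $\beta>\omega_{1}$, which the paper's one-line proof does not address at all; the paper instead disposes of it in the later Remark (halting parameter-free $ORM$-computations with real oracles have countable length by ``an easy reflection argument,'' so $wCOMP_{\beta}$ and $wRECOG_{\beta}$ collapse to the $\omega_{1}$-versions for all $\beta\geq\omega_{1}$). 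Your condensation argument modelled on Lemma~\ref{countableparamters} is a legitimate way to carry out that reflection: a halting computation is countable, register contents at time $\tau$ are bounded by $\tau$, so nothing ever approaches $\omega_{1}$. The one point to be careful about, which you correctly flag, is that the computation predicate should be formalized without $\beta$ as a parameter (e.g.\ as ``$c$ is a halting liminf-computation of $P^{y}$ all of whose register contents are ordinals''), since for $\beta>\omega_{1}$ the ordinal $\beta$ need not be fixed by the collapsing map; with that phrasing the argument goes through. So the proposal is correct, matches the paper on the part the paper actually proves, and supplies detail for a case the paper delegates to a remark.
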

\begin{proof}
 If $\alpha<\beta$, then terminating unresetting $\alpha$-computations work exactly the same on unresetting $\beta$-machines.
\end{proof}

We have seen above that $wCOMP_{\omega}=wRECOG_{\omega}$. We shall see now that that this happen again for $\omega_{1}$ and in fact for all but countably many countable ordinals $\alpha$.

\begin{lemma}{\label{unresettingomega1}}
 $wCOMP_{\omega_{1}}=wRECOG_{\omega_{1}}$.
\end{lemma}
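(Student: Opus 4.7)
The inclusion $wCOMP_{\omega_{1}}\subseteq wRECOG_{\omega_{1}}$ is covered by part (1) of the first lemma of this section (the bit-wise comparison trick), so it remains to establish the reverse. The plan is to reduce this to Theorem \ref{noOTMLoMe}, which, as the author remarks, carries over verbatim to $ORM$s. The key will be to show that, for \emph{halting} computations on real oracles, unresetting $\omega_{1}$-register machines and parameter-free $ORM$s coincide.

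To establish this equivalence I would first verify, by an easy transfinite induction on $\tau$, that in any $ORM$- or unresetting $\omega_{1}$-computation the content of each register at stage $\tau$ is bounded by $\tau$: increments raise the bound by one, copies preserve it, oracle reads produce $0$ or $1$, and at a limit $\lambda$ the $\liminf$ of ordinals that are themselves all $<\lambda$ is again $\leq\lambda$. The reflection remark preceding this section shows that a halting $ORM$-computation on a real oracle has countable running time (take a countable elementary substructure of a large enough $L_{\kappa}$ containing the computation and collapse; the real oracle, being a subset of $\omega$, is fixed). Combining the two, every halting $ORM$-computation on a real oracle keeps all register contents strictly below $\omega_{1}$ throughout and is therefore also an unresetting $\omega_{1}$-computation of the same length and with the same output; the converse is immediate, since an unresetting $\omega_{1}$-machine halts only if its registers stay below $\omega_{1}$ throughout.

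With the equivalence at hand, suppose $x\in wRECOG_{\omega_{1}}$ is witnessed by a program $P$. Since $P^{y}$ halts on the unresetting $\omega_{1}$-machine for every real $y$, the same program $P$ recognizes $x$ when viewed as a parameter-free $ORM$-program. Theorem \ref{noOTMLoMe} (in its $ORM$ version) then supplies a parameter-free $ORM$-program $Q$ that computes $x$. Applying the reflection argument once more to the computation of $Q$ on each input $i\in\omega$---where the input is a natural number and no real oracle is present, so the same L\"owenheim--Skolem-and-collapse argument still applies---each such computation halts in countable time, and the register-content bound keeps all registers below $\omega_{1}$ throughout; hence $Q$ equivalently computes $x$ on an unresetting $\omega_{1}$-register machine, so $x\in wCOMP_{\omega_{1}}$. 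The only non-routine step is the equivalence between halting unresetting $\omega_{1}$- and halting parameter-free $ORM$-computations, and that is precisely what singles out $\omega_{1}$ (rather than any smaller countable $\alpha$) as the right threshold here.
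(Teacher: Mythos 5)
Your proof is correct and takes essentially the same route as the paper: the paper's own argument is a one-line appeal to Theorem \ref{noOTMLoMe}, justified by identifying $wCOMP_{\omega_{1}}$ and $wRECOG_{\omega_{1}}$ with parameter-free $ORM$-computability and $ORM$-recognizability via the countable-running-time reflection remark. You merely spell out the details of that identification (the register-content bound and the collapse argument) that the paper leaves implicit.
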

\begin{proof}
This follows from Theorem \ref{noOTMLoMe}, as $wCOMP_{\omega_{1}}$ and $wRECOG_{\omega_{1}}$ are just the set of $ORM$-computable and $ORM$-recognizable reals (without ordinal parameters), respectively.
\end{proof}

\begin{thm}
 There is $\beta<\omega_{1}$ such that there are no lost melodies for unresetting $\gamma$-machines whenever $\gamma\geq\beta$.
\end{thm}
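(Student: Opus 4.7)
The plan is to combine Lemma \ref{unresettingomega1} with the observation that there are only countably many recognizable reals and that halting unresetting $\omega_{1}$-computations use only countably many ordinal register values. Set $R := wRECOG_{\omega_{1}}$. Distinct recognizable reals require distinct recognizing programs, so $R$ is countable; and by Lemma \ref{unresettingomega1} we have $R = wCOMP_{\omega_{1}}$.

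For each $x \in R$, fix an unresetting $\omega_{1}$-program $P_{x}$ computing $x$. For every $n \in \omega$ the computation $P_{x}^{\emptyset}(n)$ halts, and by the remark preceding Lemma \ref{unresettingomega1} its running time is countable. An induction on computation stages then shows that every register content that appears during such a halting computation is a countable ordinal: at successor steps the new content is an increment, copy, or an oracle bit applied to previously countable contents, and at limit steps the liminf of a length-$<\omega_{1}$ sequence of countable ordinals is itself countable. Hence there is some $\delta_{x} < \omega_{1}$ that strictly bounds all register contents encountered in any $P_{x}^{\emptyset}(n)$, so $P_{x}$ is in fact a valid unresetting $\delta_{x}$-program for $x$, giving $x \in wCOMP_{\delta_{x}}$.

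Now put $\beta := \sup\{\delta_{x} \mid x \in R\}$. As a countable supremum of countable ordinals, $\beta < \omega_{1}$. I claim this $\beta$ suffices. Let $\gamma \geq \beta$ and $x \in wRECOG_{\gamma}$. By monotonicity (the preceding lemma if $\gamma \leq \omega_{1}$; for $\gamma > \omega_{1}$ one again reduces to the $\omega_{1}$-case via the countable-running-time remark, since any halting $\gamma$-computation only uses countable register values) we have $x \in R$. But then $\delta_{x} \leq \beta \leq \gamma$, so the program $P_{x}$ computes $x$ equally well on an unresetting $\gamma$-machine, whence $x \in wCOMP_{\gamma}$. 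Thus $wRECOG_{\gamma} \subseteq wCOMP_{\gamma}$, and together with the reverse inclusion from the earlier lemma this yields the absence of lost melodies for unresetting $\gamma$-machines.

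The main obstacle is the second step: guaranteeing that the full trajectory of register contents in a halting unresetting $\omega_{1}$-computation lives below some countable ordinal. Successor steps are harmless, but the limit rule requires care, since a priori one only knows the \emph{result} of the liminf is $<\omega_{1}$, not that the sequence being limited is itself tame. This is handled cleanly by using the countable-running-time remark first, so that the entire computation fits into a countable ordinal of stages, and then running the induction on stages to bound the values. A secondary bookkeeping point is extending the monotonicity inclusion $wRECOG_{\gamma} \subseteq wRECOG_{\omega_{1}}$ past $\omega_{1}$, which reduces to the same running-time observation.
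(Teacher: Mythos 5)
Your proposal is correct and follows essentially the same route as the paper: it combines Lemma \ref{unresettingomega1} with monotonicity and the countability of the set of programs to find a countable $\beta$ past which $wCOMP_{\gamma}$ and $wRECOG_{\gamma}$ have stabilized at their $\omega_{1}$-values. The only difference is that you spell out the stabilization argument (bounding the register contents of each halting $\omega_{1}$-computation below a countable $\delta_{x}$ via the countable-running-time remark) that the paper compresses into the parenthetical ``possible by monotonicity and the fact that there are only countably many programs.''
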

\begin{proof}
 Let $\beta$ be large enough such that $wCOMP_{\beta}=wCOMP_{\omega_{1}}$ and $wRECOG_{\beta}=wRECOG_{\omega_{1}}$. (This is possible by monotonicity and the fact that there are
only countably many programs.) Then, for all $\gamma\geq\beta$, we have $wCOMP_{\gamma}=wCOMP_{\omega_{1}}=wRECOG_{\omega_{1}}=wRECOG_{\gamma}$ by Lemma \ref{unresettingomega1}.
\end{proof}

Our next goal is to show that there are ordinals $\alpha$ for which $wCOMP_{\alpha}\subsetneq wRECOG_{\alpha}$, i.e. for which the lost melody phenomenon does occur:

\begin{lemma}{\label{recognizeomega}}
 There exists an unresetting $\omega+1$-program $P$ such that $P(x)\downarrow=1$ iff $x=\omega$ and $P(x)\downarrow=0$, otherwise.
\end{lemma}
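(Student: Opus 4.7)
The plan is to exploit the liminf behaviour of the program counter and a flag register at the limit time $\omega$ in order to distinguish $x = \omega$ from $x$ finite. I will construct an unresetting $(\omega+1)$-program $P$ that uses the input register $R_0$ (containing $x$), a counter register $C$, and a flag register $F$.

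The program first zero-checks $R_0$ and halts with output $0$ if $R_0 = 0$. Otherwise it initialises $C := 0$ and $F := 1$, and enters an infinite loop whose body is arranged so that its smallest program line, call it $L$, is the instruction ``if $F = 0$, halt with output $1$''. After $L$, the loop tests whether $C$ equals $R_0$ (halting with output $0$ if so), then increments $C$, then executes the two-step sequence $F := 0; F := 1$, and finally jumps back to $L$. For a positive natural number $x = n$ the counter $C$ reaches $n$ after finitely many iterations, and the equality test triggers a halt with output $0$. For $x = \omega$ the equality test never succeeds at a successor step, so the computation runs through all $\iota < \omega$. At the limit $\omega$ the unresetting semantics yield $Z_\omega = \liminf_{\iota<\omega} Z_\iota = L$ (since $L$ is the minimum line of the loop, visited cofinally), $C_\omega = \liminf_{\iota<\omega} C_\iota = \omega$, and, crucially, $F_\omega = \liminf_{\iota<\omega} F_\iota = 0$, since although $F$ equals $1$ at almost every successor time, it is briefly reset to $0$ once per iteration and hence $0$ cofinally often. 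So at time $\omega$, $P$ executes line $L$ with $F = 0$ and halts with output $1$.

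The main bookkeeping point is ensuring that $F_\omega = 0$ while $F = 1$ at every successor visit of $L$, so that the finite case does not trigger a spurious output of $1$; this is exactly what the sequence $F := 0; F := 1$ placed at the end of each iteration achieves, since $F$ is $0$ cofinally often but always restored to $1$ before control returns to $L$. Forcing $L$ to be the smallest line of the loop is just a matter of ordering the program listing, and neither $R_0$ nor any copy of it is ever incremented, so the danger of overflow when $x = \omega$ is avoided. With these details fixed, the verification in the three cases $x = 0$, $x$ a positive integer, and $x = \omega$ reduces to the liminf computations above, which are routine given the definition of unresetting $\alpha$-machines.
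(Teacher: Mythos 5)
Your construction is correct and is essentially the paper's own argument: count upward in an auxiliary register while maintaining a flag whose $\liminf$ vanishes exactly at limit stages, and decide according to whether the input value is reached at a successor or a limit step. Your write-up merely makes explicit the bookkeeping (the $F:=0;F:=1$ toggle and the placement of the test at the minimal loop line) that the paper's sketch leaves implicit.
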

\begin{proof}
Let $R_1$ be the register containing $x$. Use a register $R_2$ to successively count upwards from $0$. Use a flag to check whether the machine is in a limit state.
Eventually, the content of $R_1$ is reached. If this happens in a limit step, then $R_1$ contains $\omega$, otherwise, it does not.
\end{proof}

\begin{lemma}{\label{unresettingomegaplus1}}
 $wCOMP_{\omega+1}=COMP_{\omega}$ and $wRECOG_{\omega+1}=RECOG_{\omega}$, i.e. unresetting $\omega+1$-machines are equivalent in computational and recognizability strength to
$ITRM$s.
\end{lemma}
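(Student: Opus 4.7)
The plan is twofold: an easy bisimulation direction and a harder direct simulation direction.

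For the inclusions $COMP_{\omega}\subseteq wCOMP_{\omega+1}$ and $RECOG_{\omega}\subseteq wRECOG_{\omega+1}$, I would apply Lemma \ref{bisimulation1} with $\alpha=\omega+1$ and $\beta=\omega$; the required unresetting $(\omega+1)$-program recognizing $\omega$ is exactly the one furnished by Lemma \ref{recognizeomega}. The bisimulation converts each resetting $\omega$-program into an unresetting $(\omega+1)$-program with identical halting and output behaviour on every real oracle, so recognition transfers along with computation.

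For the converse inclusions $wCOMP_{\omega+1}\subseteq COMP_{\omega}$ and $wRECOG_{\omega+1}\subseteq RECOG_{\omega}$, the plan is to simulate an unresetting $(\omega+1)$-machine $M$ on an $ITRM$ $M'$. For each $M$-register $R_i$, reserve two $M'$-registers $V_i,F_i$ with invariant $V_i=R_i+1$, $F_i=0$ when $R_i<\omega$, and $V_i=0$, $F_i=1$ when $R_i=\omega$. Each $M$-instruction is simulated by a constant number of $M'$-steps: increments check $F_i=0$ first (and diverge if not), copies transfer $(V_j,F_j)$ together, jump-if-zero checks $V_i=1\wedge F_i=0$, and oracle reads use $V_i-1$ when $F_i=0$. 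Constant overhead per step ensures that the $ITRM$-limits of $M'$ coincide with the $M$-limits of the simulated machine.

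At a limit $\lambda$, the $ITRM$'s own liminf rule gives the correct values when $R_i$ is eventually constant (finite or $\omega$). In the other cases $V_i$ resets to $0$ and the invariant must be repaired. I would introduce a limit-detecting register $G$, incremented at each simulated step, so that $G=0$ precisely identifies a fresh post-limit state; a fix-up routine at the top of the main loop then sets $F_i:=1$ for every $i$ with $V_i=0\wedge F_i=0$. Arranging $M'$ as a single-entry loop guarantees that $\liminf$ of its own program counter lands on the fix-up entry.

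The main obstacle is the genuinely mixed limit case where $R_i$ cofinally takes both $\omega$ and some nonzero finite $m$: there $\liminf R_i=m$, but the simulation would collapse to $V_i(\lambda)=0$ and the fix-up would spuriously set $F_i=1$. I expect to dispose of this by proving a cycle criterion for unresetting $(\omega+1)$-machines in the style of Lemma \ref{cyclecrit}: the state space $\{1,\ldots,m\}\times(\omega+1)^k$ is countable, and the unresetting semantics, which forbids strict descent of any register between matching states, forces an oscillating cofinal pattern to produce a cycle, contradicting halting. Once this pathology is ruled out, the simulation faithfully reproduces both halting and output of $M$ on every real oracle, which delivers $wCOMP_{\omega+1}\subseteq COMP_{\omega}$ and, because the simulation preserves acceptance oracle by oracle, also $wRECOG_{\omega+1}\subseteq RECOG_{\omega}$.
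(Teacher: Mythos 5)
Your first direction is exactly the paper's: Lemma \ref{recognizeomega} supplies the unresetting $(\omega+1)$-program recognizing $\omega$, and the simulation of Lemma \ref{bisimulation1} preserves halting and output oracle by oracle, so $COMP_{\omega}\subseteq wCOMP_{\omega+1}$ and $RECOG_{\omega}\subseteq wRECOG_{\omega+1}$ both follow. For the converse, your simulation is essentially the one in the paper as well (the paper codes a register content $v<\omega$ as $v+1$ and codes $\omega$ as $0$ in a single register, so that the $ITRM$'s overflow-reset itself implements ``$\liminf=\omega$''; your pair $(V_i,F_i)$ with a post-limit fix-up does the same job), and you are right that the mixed limit case is the crux --- the paper's sketch passes over it in silence.

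However, your proposed way of disposing of that case is wrong: the mixed oscillation \emph{can} occur in a halting computation, so it cannot be argued away via a cycle criterion. Concretely, let an unresetting $(\omega+1)$-program first drive $R_0$ up to $\omega$ and store a fixed $m$ with $0<m<\omega$ in $R_3$, then loop: copy $R_0$ into $R_1$, copy $R_3$ into $R_1$, increment a counter $R_2$, test the usual two-flag limit detector, and repeat; on detecting a limit, halt with output depending on $R_1$. Below the limit $\lambda$ of this loop, $R_1$ equals $\omega$ cofinally and equals $m$ cofinally, and is always $\geq m$, so $\liminf_{\iota<\lambda}R_1(\iota)=m$; yet both your encoding and the paper's decode the simulating register(s) to $\omega$ at $\lambda$. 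The cycle criterion of Lemma \ref{cyclecrit} requires the \emph{entire} state --- program line and all registers simultaneously --- to recur with no register dipping below its earlier value in between; since the counter $R_2$ is strictly increasing across iterations, no state ever recurs, no cycle arises, and the computation halts. What the simulation actually needs is extra bookkeeping per simulated register: for instance one auxiliary register holding (last finite value of $R_i'$)$+1$, updated only while $R_i'$ is finite, and another whose liminf records whether finite values occur cofinally; from their contents at an $ITRM$-limit one can distinguish ``eventually $\omega$'', ``finite values cofinal with liminf $m$'' and ``finite values cofinal but escaping to infinity'', and restore the correct code. Without something of this kind the inclusion $wCOMP_{\omega+1}\subseteq COMP_{\omega}$ is not established.
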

\begin{proof}(Sketch)
 One direction follows from Lemma \ref{recognizeomega} and Lemma \ref{bisimulation1}.

For the other direction, we show that a resetting $\omega$-machine (i.e. an $ITRM$) can simulate an unresetting $(\omega+1)$-machine. To see this, proceed as follows: Let $P$ be a program for an unresetting $\omega+1$-machine.
Assume that $P$ uses $k$ registers $R_{1}^{\prime},...,R_{k}^{\prime}$. We set up an $ITRM$-program in the following way: Reserve $R_{1},...,R_{k}$ for the simulation of $P$. In the simulation, let $0$ represent $\omega$
and let $i+1$ represent $i$ for all $i\in\omega\setminus\{0\}$. Whenever $P$ requires that the content of $R_{i}^{\prime}$ is set to the value $0$, set $R_{i}$ to $1$. When $P$ requires that the content of $R_{i}^{\prime}$ is
increased by $1$ and this content is currently $0$, stop. Otherwise, run $P$ on $R_{1},...,R_{k}$ in the usual way.
\end{proof}

\begin{thm}
 $wCOMP_{\omega+1}\neq wRECOG_{\omega+1}$, i.e. there are lost melodies for unresetting $\omega+1$-machines.
\end{thm}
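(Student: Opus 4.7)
The plan is to observe that this is essentially a corollary of the two results that have just been assembled. By Lemma \ref{unresettingomegaplus1}, unresetting $\omega+1$-machines are equivalent to $ITRM$s both in computational strength and in recognizability strength; that is, $wCOMP_{\omega+1}=COMP_{\omega}$ and $wRECOG_{\omega+1}=RECOG_{\omega}$. So it suffices to exhibit a lost melody for $ITRM$s, i.e.\ a real in $RECOG_{\omega}\setminus COMP_{\omega}$.

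Such a real is delivered by Theorem \ref{lostmelody}: the $<_L$-minimal real $x$ coding the $\in$-minimal $L_{\alpha}\models ZF^{-}$ is $ITRM$-recognizable but not $ITRM$-computable. Hence $x\in RECOG_{\omega}\setminus COMP_{\omega}=wRECOG_{\omega+1}\setminus wCOMP_{\omega+1}$, witnessing $wCOMP_{\omega+1}\subsetneq wRECOG_{\omega+1}$.

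Concretely, to spell the argument out: a program $P$ recognizing $x$ on an $ITRM$ can be converted, via the second half of the proof of Lemma \ref{unresettingomegaplus1} in reverse (using Lemma \ref{recognizeomega} together with Lemma \ref{bisimulation1}), into an unresetting $(\omega+1)$-program $P'$ recognizing the same real. Conversely, if $x$ were computable by some unresetting $(\omega+1)$-program $Q$, then by simulating $Q$ on an $ITRM$ (again as in Lemma \ref{unresettingomegaplus1}, encoding $\omega$ by $0$ and $i$ by $i+1$) we would obtain an $ITRM$-computation of $x$, contradicting Theorem \ref{lostmelody}.

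There is no real obstacle here, since all the work was done in Lemma \ref{unresettingomegaplus1} and Theorem \ref{lostmelody}; the only point worth stressing in the writeup is that both the recognizability half and the non-computability half transfer across the equivalence, so that the strict inclusion $COMP_{\omega}\subsetneq RECOG_{\omega}$ lifts to the strict inclusion $wCOMP_{\omega+1}\subsetneq wRECOG_{\omega+1}$.
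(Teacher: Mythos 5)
Your proposal is correct and follows exactly the paper's own route: it combines Lemma \ref{unresettingomegaplus1} with the lost melody theorem for $ITRM$s (Theorem \ref{lostmelody}) to transfer the strict inclusion $COMP_{\omega}\subsetneq RECOG_{\omega}$ to $wCOMP_{\omega+1}\subsetneq wRECOG_{\omega+1}$. The additional spelling-out of how both halves transfer across the equivalence is harmless but not needed beyond what the lemma already provides.
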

\begin{proof}
This follows immediately from Lemma \ref{unresettingomegaplus1}, since, by the lost melody theorem for $ITRM$s, we have $COMP_{\omega}\neq RECOG_{\omega}$.
\end{proof}

\begin{rem} Arguments similar to the proof of Lemma \ref{unresettingomegaplus1} show that a resetting $\omega$-machine can in fact simulate an unresetting $(\omega+i)$-machine for all $i\in\omega$ (and more).
On the other hand, it can be shown that this is no longer the case for unresetting $\alpha$-machines when $\alpha>\omega_{\omega}^{CK}$ is exponentially closed: Coding $x\in L_{\alpha}$,
$x=\{y\in L_{\beta}|L_{\beta}\models\phi_{n}(y,\vec{\gamma})\}$ (where $\vec{\beta}$ is a finite sequence of ordinals and $\beta<\alpha$) by $(\alpha,n,\vec{\gamma})$ and using techniques similar to those developed in
\cite{KoSy}, we can evaluate arbitrary statements about the coded elements inside $L_{\alpha}$ with an unresetting $\alpha$-machine. This allows us to search through $\mathcal{P}(\omega)\cap L_{\alpha}$
with such a machine. As in the proof of Theorem \ref{noOTMLoMe}, one can now conclude that all reals in $L_{\alpha}$ recognizable by an unresetting $\alpha$-machine are already computable by such a machine.
We also saw that $RECOG_{\omega}\subseteq wRECOG_{\alpha}$ for $\alpha>\omega$. Now, the minimal real code $c:=cc(L_{\omega_{\omega}^{CK}})$ of $L_{\omega_{\omega}^{CK}}$ is an element of $L_{\omega_{\omega}^{CK}+2}$, and hence
of $L_{\alpha}$. $c$ is easily seen to be $ITRM$-recognizable, but as $c\notin L_{\omega_{\omega}^{CK}}$, it is not $ITRM$-computable. But $c\in RECOG_{\omega}\cap L_{\alpha}\subseteq wRECOG_{\alpha}\cap L_{\alpha}\subseteq wCOMP_{\alpha}$.
So $c\in wCOMP_{\alpha}-COMP_{\omega}$.\end{rem}

\textbf{Question}: We saw that $wCOMP_{\omega+1}\subsetneq wRECOG_{\omega+1}$ and there is a countable $\beta$ such that $wCOMP_{\gamma}=wRECOG_{\gamma}$ when $\gamma>\beta$. We do not know
if there are gaps in the ordinals for which lost melodies exist, i.e. if there are $\omega+1<\gamma<\delta$ such that $wCOMP_{\gamma}=wRECOG_{\gamma}$, but $wCOMP_{\delta}\subsetneq wRECOG_{\delta}$.

\subsection{The resetting case}

Note first that the computational strength for various values of $\alpha$ much higher than in the unresetting case:

\begin{thm}
Let $P_{i}$ be some natural enumeration of the $ORM$-programs. There is $\alpha<\omega_1$ such that some resetting $\alpha$-machine can solve the halting problem for parameter-free $ORM$s (i.e. unresetting $\omega_{1}$-machines), i.e. there is an
unresetting $\alpha$-program $Q$ such that $Q(i)\downarrow=1$ iff $P_{i}(0)$ stops and $Q(i)\downarrow=0$ iff $P_{i}(0)$ diverges.
\end{thm}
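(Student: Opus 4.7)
The plan is to exploit the remark preceding the theorem: every halting parameter-free $ORM$-computation on a real oracle has countable running time, so
\[
 \lambda \;:=\; \sup\{\rho : P_{i}(0) \text{ halts in exactly } \rho \text{ steps},\; i\in\omega\}
\]
is a countable ordinal, being a countable supremum of countable ordinals. Fix any countable limit ordinal $\alpha>\lambda$, say $\alpha=\lambda\cdot\omega$. I will describe a resetting $\alpha$-program $Q$ that decides, given $i\in\omega$, whether $P_{i}(0)$ halts when run as an unresetting $\omega_{1}$-machine.

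The first ingredient is a straightforward step-by-step simulation of $P_{i}(0)$ on the resetting $\alpha$-machine, using dedicated registers to hold the simulated program line and register contents of $P_{i}$, and performing one simulated step per outer iteration of a fixed-length simulator subroutine. The successor-step rules of resetting and unresetting machines agree; the two limit rules also agree whenever simulated register contents are $<\alpha$, so the simulation is faithful as long as these values stay below $\alpha$. By a routine induction on time, in any parameter-free $ORM$-computation on input $0$ the content of every register at time $\gamma$ is bounded by $\gamma$; hence during the first $\alpha$ real steps all simulated values stay $<\alpha$, and the simulation is genuinely faithful throughout that window.

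Detecting a timeout is the key step, and it uses the resetting rule itself. Alongside the simulation, $Q$ maintains a tick-counter $T$ and a flag $F$, both initialized to $0$; at every outer iteration $T$ is incremented by $1$ and $F$ is set to $1$. Since $T$ is monotonically increasing along successor steps, $\liminf_{\gamma<\beta} T(\gamma)=\beta$ at every real limit time $\beta\leq\alpha$; at $\beta=\alpha$ the liminf equals $\alpha$, which is not $<\alpha$, triggering the resetting rule and setting $T$ back to $0$. Because each outer iteration consumes a fixed finite number of real steps and $\alpha$ is a limit, by real time $\alpha$ the simulator has completed $\alpha>\lambda$ simulated steps. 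At every iteration $Q$ first checks whether $P_{i}(0)$ has halted (in which case it outputs $1$), and otherwise whether $T=0\wedge F=1$ (in which case the timeout has fired, and $Q$ outputs $0$).

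Correctness is then immediate: if $P_{i}(0)\downarrow$ in time $\rho<\lambda<\alpha$, the faithfulness of the simulation up to time $\alpha$ guarantees that the halt is detected strictly before the timeout; if $P_{i}(0)\uparrow$, no halt signal fires during the first $\alpha$ real steps, and the reset of $T$ at time $\alpha$ triggers the timeout clause. The main technical obstacle is the limit-step bookkeeping: one must design the outer iteration so that real limit times coincide with simulated limit steps (which reduces to the identity $k\cdot\beta=\beta$ for finite $k$ and limit $\beta$), and one must ensure that the check $T=0\wedge F=1$ catches the reset at the first successor moment after time $\alpha$. Both issues become routine once one commits to a uniform fixed-length simulator subroutine.
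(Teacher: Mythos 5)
Your overall strategy---simulate $P_{i}(0)$ on the resetting $\alpha$-machine with $\alpha$ above the supremum of the parameter-free halting times, and use the resetting rule on a monotone clock register to detect a timeout---is exactly the strategy of the paper's proof. Your overflow detection via the pair $T,F$ is a legitimate variant of the paper's device (the paper uses an auxiliary register to distinguish a genuine overflow from a register that was $0$ cofinally often; for a strictly increasing clock your version suffices), and your observation that register contents at time $\gamma$ are bounded by $\gamma$, so that the clock overflows no later than any simulated register, mirrors the paper's remark that $\alpha_{1}\leq\alpha_{2}$.

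There is, however, one genuine gap: the phrase ``dedicated registers to hold the simulated program line and register contents of $P_{i}$'' cannot be realized by a \emph{single} program $Q$. The program $Q$ has a fixed finite number of registers, while the programs $P_{i}$ use unboundedly many registers as $i$ ranges over $\omega$; so a one-register-per-simulated-register simulation is only available separately for each fixed register count $n$, giving a family $Q_{n}$ rather than the single $Q$ the theorem demands. The paper closes exactly this gap by invoking the universal $ORM$ of Koepke and Siders: there is an effective procedure converting any $ORM$-program into one with the same halting behaviour using only $12$ registers, after which the fixed-register-count simulation applies uniformly. You need either this citation or an explicit coding of all of $P_{i}$'s registers into boundedly many (e.g.\ via ordinal pairing, together with a machine-implementable decoding), neither of which is routine enough to leave implicit. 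The remaining issues you defer (fixed-length outer iterations, alignment of real and simulated limit times, placement of the $T=0\wedge F=1$ test) are indeed routine.
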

\begin{proof}
 Let $\alpha_{1}$ be the supremum of the register contents occuring in any halting parameter-free $ORM$-computation, let $\alpha_{2}$ be the supremum of the parameter-free
$ORM$-halting times and let $\alpha:=max\{\alpha_{1},\alpha_{2}\}$ (of course, as all registers are initially $0$ and a register content can be increased at most by $1$ in
one step, we will have $\alpha_{1}\leq\alpha_{2}$; it is not hard to see that in fact $\alpha_{1}=\alpha_{2}$).

Now consider $ORM$-programs with a fixed number $n$ of registers. Then a resetting $\alpha$-machine can solve the halting problem for such programs by simply simulating the given
program $P$ in the registers $R_{1},...,R_{n}$, while using a further register $R_{n+1}$ as a clock by increasing its content by $1$ whenever a step in the simulation is carried out.
If any of the registers $R_{1},...,R_{n},R_{n+1}$ overflows, then $P$ does not halt and we output $0$; otherwise, the simulation reaches the halting state and we output $1$.

A register overflow can be detected as follows: If a register $R$ has overflown, then the machine must be in a limit state (which can be detected by flags in the usual way) and
$R$ must contain $0$. In this situation, either there has been an overflow or the prior content of $R$ has been $0$ cofinally often in the current running time. This can be
distinguished by an extra register $R^{\prime}$ whose content is set to $0$ whenever $R$ contains $0$ and to $1$, otherwise. Hence, if $R^{\prime}$ contains $0$ in a limit state,
 then the content of $R$ must have been $0$ cofinally often.

Now, by \cite{KoSy}, there is a universal $ORM$, so we have an effective method how to find, for every $ORM$-program $P$, an $ORM$-program with the same halting behaviour, but using only $12$ registers. This, 
in combination with the halting problem solver for programs with any fixed number of registers, solves the halting problem for $ORM$s.
\end{proof}

The same holds when one considers the recognizability strength. To show this, we need some preliminaries.

\begin{lemma}{\label{longprograms}}
There is an $ITRM$-program $R$ such that, for each real $x$ coding an ordinal $\alpha<\omega_1$ according to $f:\alpha\rightarrow\omega$ injective, $R^{x}$ changes the content of the register $R_1$ exactly $\alpha+1$ many times.
\end{lemma}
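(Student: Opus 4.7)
The plan is to have $R$ iterate through $D_x := \text{rng}(f) \subseteq \omega$ in $<_x$-order (where $<_x$ is the well-order of type $\alpha$ that $f$ induces on $D_x$), toggling $R_1$ once per element visited and once more at termination. Since $D_x$ has $\alpha$ elements, this produces exactly $\alpha + 1$ changes to $R_1$.

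First I would set up a register $R_c$ to hold the ``current element'' $n \in D_x$ and use $R_1$ as a toggle alternating between two fixed values. Initialize $R_c$ to the $<_x$-minimum of $D_x$ (located by a straightforward natural-number search in the oracle) and toggle $R_1$. The main loop then repeats: compute the next element of $D_x$ in $<_x$-order, place it in $R_c$, and toggle $R_1$; when the loop exits, toggle $R_1$ one last time and halt. For successor positions in $<_x$ the ``next element'' is simply the $<_x$-successor of $R_c$ in $D_x$, found by searching $m \in D_x$ for the $<_x$-least $m$ with $R_c <_x m$, which is immediate from the oracle $x$.

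The hard part will be handling limit positions $\lambda$ of $<_x$, where $e_\lambda$ has no $<_x$-immediate predecessor in $D_x$ and so cannot be obtained by the successor search. Here the plan is to exploit the $ITRM$'s reset behavior: as the successor iteration proceeds, the $ITRM$'s own running time passes through limit stages at which the relevant registers reset, and at each such reset a sub-procedure is invoked to locate $e_\lambda$ as the $<_x$-least element of $D_x$ whose $<_x$-initial segment properly extends all previously processed initial segments. Because $\alpha < \omega_1^{CK,x} < \omega_{\omega}^{CK,x}$, the main theorem of \cite{KoMi} ensures that $ITRM$s with oracle $x$ have enough computational strength to decide the order-type predicates on $<_x$-initial segments needed by this sub-procedure. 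Termination is detected either when $R_c$ becomes the $<_x$-maximum of $D_x$ (successor $\alpha$) or when the limit sub-procedure exhausts its candidates (limit $\alpha$).

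The main obstacle I expect is coordinating the $ITRM$'s own resets with the limit structure of $<_x$, so that the sub-procedure, which cannot maintain explicit storage of the (potentially infinite) set of previously visited elements, nevertheless advances the iteration by exactly one position at each invocation and correctly signals termination at position $\alpha$. The standard remedy is to encode the needed bookkeeping in a small set of auxiliary registers tracking suprema of the most recently found limit-elements at successive nesting levels, thereby reducing the problem at each reset to a bounded number of order-type queries in the oracle $x$.
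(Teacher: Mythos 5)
Your plan founders exactly where you yourself locate the difficulty: advancing the iteration past limit positions of $<_x$. An $ITRM$ has a fixed finite number of registers, each holding a natural number, and at a limit time of the computation the only information that survives is the liminf (or reset to $0$) of each register's history. The ``current position'' of your iteration is an ordinal $<\alpha$, and after the registers are liminf'd there is no handle left on it: the liminf of the successive values stored in $R_c$ bears no useful relation to $e_\lambda$. Your proposed remedy --- finitely many auxiliary registers ``tracking suprema of the most recently found limit-elements at successive nesting levels'' --- cannot work in general, because the nesting depth of limits below $\alpha$ is already unbounded for $\alpha\geq\omega^{\omega}$: to recover $e_{\omega^{2}}$ you need the tail of the sequence $(e_{\omega\cdot n})_{n}$, to recover $e_{\omega^{3}}$ you need the $(e_{\omega^{2}\cdot n})_{n}$, and so on, so no fixed finite family of registers suffices, and the liminf rule destroys each level's history as you pass the next level up. The appeal to the computational strength of $ITRM$s does not repair this: deciding ``does the $<_x$-initial segment below $m$ have order type exactly $\lambda$'' presupposes a representation of $\lambda$, which is precisely the ordinal-sized datum the machine cannot retain; moreover each such query is itself a transfinite subcomputation passing through many limit times of the machine, so the intended correspondence between machine-limits and iteration-limits breaks down. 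The one genuinely hard point of the lemma is thus asserted rather than proved.

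The paper avoids forward iteration entirely. It takes the well-foundedness test $P$ of Lemma $2$ of \cite{KoMi}, whose stack is coded so that the liminf rule automatically yields the correct limit state of the search, and whose run on an oracle coding a well-order of type $\alpha$ pushes at least $\alpha$ many elements onto the stack; one then flips $R_1$ at each push and makes a small adjustment to turn ``at least $\alpha$'' into ``exactly $\alpha+1$''. If you want to rescue a direct iteration through $<_x$, you would have to design a register coding of the iteration history whose liminf at machine limit times reconstructs the needed state --- at which point you will essentially have rebuilt the Koepke--Miller stack.
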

\begin{proof}
By Lemma $2$ of \cite{KoMi}, the program $P$ defined there to test the oracle for well-foundedness of the coded relation runs for at least $\beta$ many steps when the oracle codes a well-ordering of length $\beta$.
Roughly, $P$ uses a stack to represent a finite descending sequence and attempts to continue it. We reserve a separate register $R_1$ and flip its content between $0$ and $1$ whenever a new element is put on the stack 
in the computation of $P^{x}$. The argument for Lemma $2$ of \cite{KoMi} shows that the content of $R_1$ will be changed at least $\alpha$ many times. If this happens exactly $\alpha$ many times, we simply set up
our program to flip the content of $R_1$ once more after $P$ has stopped. If it happens more than $\alpha$ many times, then some finite sequence $\vec{s}$ of natural numbers is the $\alpha+1$th sequence that is put
on the stack and we set up our program to stop once $\vec{s}$ has appeared on the stack.
\end{proof}

\begin{corollary}{\label{identifymachinetype}}
 Let $\alpha<\omega_1$. There is a resetting $\alpha$-program $I$ which, given a real $x$ coding an ordinal $\gamma$, halts with output $1$ iff $\gamma=\alpha$ and otherwise
halts with output $0$.
\end{corollary}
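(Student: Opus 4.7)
The plan is to use Lemma~\ref{longprograms} to convert $x$ into a ``clock'' that ticks exactly $\gamma+1$ times, then to count the ticks on the resetting $\alpha$-machine with a register whose $\alpha$-bounded dynamics forces it to reset precisely when the tally would cross $\alpha$. Reading off the resulting pattern of overflows together with the final counter value singles out the case $\gamma=\alpha$.

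Let $R$ be the $ITRM$-program from Lemma~\ref{longprograms} and $F$ its flag register. Since $\alpha\geq\omega$, the resetting $\alpha$-machine simulates $R^{x}$ verbatim. Program $I$ runs this simulation while maintaining: an $\alpha$-valued counter $R_c$, incremented whenever $R$ flips $F$; an auxiliary register $R_c'$ kept at $0$ while $R_c=0$ and at $1$ otherwise (the overflow-detection trick used in the preceding proof); and two flag registers $R_{\mathrm{ovf}}$ and $R_{\mathrm{twice}}$. At each limit stage of the simulation, $I$ inspects $R_c$ and $R_c'$: if $R_c=0$ and $R_c'=1$, an overflow of $R_c$ has just occurred, in which case $I$ promotes $R_{\mathrm{ovf}}$ from $0$ to $1$ or, if $R_{\mathrm{ovf}}$ was already $1$, promotes $R_{\mathrm{twice}}$. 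When $R^{x}$ halts, $I$ outputs $1$ iff $(R_c,R_{\mathrm{ovf}},R_{\mathrm{twice}})=(1,1,0)$ and $0$ otherwise.

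Correctness follows by tracing $R_c$; take $\alpha$ a limit ordinal, the successor case being a minor variant. A straightforward induction shows that, so long as no overflow has yet occurred, $R_c$ has value $n+1$ after the $n$-th flip (numbered from $0$), the extra $+1$ coming from the $\liminf$-assignment at each intermediate limit position. Hence the first overflow fires exactly at the $\alpha$-th flip: the prior values $\beta+1$ for $\beta<\alpha$ are cofinal in $\alpha$, so at the ambient limit stage $\lambda$ we have $\liminf_{\iota<\lambda}R_{c,\iota}=\alpha$, $R_c$ resets to $0$, and the accompanying flip sends it back to $1$. For $\gamma=\alpha$ this is the last flip, so $I$ halts in state $(1,1,0)$. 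For $\gamma<\alpha$ no overflow occurs, so $R_{\mathrm{ovf}}=0$; for $\alpha<\gamma<2\alpha$ there is one overflow but at least one further flip after it, so $R_c$ ends strictly above $1$; and for $\gamma\geq 2\alpha$ a second overflow fires and $R_{\mathrm{twice}}=1$. In each non-$\alpha$ case the target triple fails.

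The main technical obstacle is the bookkeeping at limit stages: one must arrange that the overflow-check line is the active program line at each limit (via the standard $\liminf$-on-program-counter trick employed repeatedly in this paper) and verify that $R_{\mathrm{ovf}}$ and $R_{\mathrm{twice}}$, once raised, persist through all later limits, which is automatic because their $\liminf$-value is then $1<\alpha$. For $\alpha$ a successor the first overflow is triggered by the successor-increment rule rather than by the $\liminf$-rule, which requires detecting overflows after each successor step as well (e.g.\ by comparing $R_c$ to its previous value, any decrease signalling an overflow); the counting argument itself is otherwise unchanged.
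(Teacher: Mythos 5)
Your proof is correct and follows essentially the same route as the paper's: simulate the program from Lemma \ref{longprograms}, count its $\gamma+1$ flips of $R_1$ in an $\alpha$-bounded counter, and use the resetting behaviour to test whether the machine halts exactly one flip after the first (and only) overflow; your bookkeeping with $(R_c,R_{\mathrm{ovf}},R_{\mathrm{twice}})$ is just a concrete implementation of the paper's three-case test, and you additionally treat the successor-$\alpha$ case, which the paper glosses over. One minor slip: the simulation of the $ITRM$-program $R$ on a resetting $\alpha$-machine with $\alpha>\omega$ is not \emph{verbatim}, since the simulated machine's resets at $\omega$ must themselves be detected and emulated (the paper appeals to its earlier simulation results for this), but this does not affect the argument.
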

\begin{proof}
As $\alpha$-register machines can simulate $ITRM$s, we can use Lemma \ref{longprograms} to obtain a program $R$ that (run in the oracle $x$) changes the content of register $R_1$ exactly $\alpha+1$ many times. We use
a separate register $R_2$ that starts with content $0$ and whose content is incremented by $1$ whenever the content of $R_1$ is changed. Now, if $R_2$ overflows and the content of $R_1$ is changed
afterwards without $R$ halting, then $\alpha<\gamma$. If, on the other hand, $R$ stops without $R_2$ having overflown, we have $\gamma<\alpha$. If neither happens, i.e. if $R_2$ overflows and the
next change of the content of $R_1$ is followed by $R$ halting, then $\alpha=\gamma$. These scenarios are easy to detect.
\end{proof}

\begin{thm}{\label{resettingalpharecogstrength}}
There exists $\alpha<\omega_{1}$ and $x\subseteq\omega$ such that $x\in RECOG_{\alpha}$, but $x\notin wRECOG_{\omega_{1}}$.
\end{thm}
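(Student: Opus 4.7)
The plan is to let $x$ be the halting number for parameter-free $ORM$s. The previous theorem furnishes an $\alpha<\omega_{1}$ together with a resetting $\alpha$-program $Q$ satisfying $Q(i)\downarrow=1$ iff $P_{i}(0)\downarrow$ and $Q(i)\downarrow=0$ otherwise. Setting $h:=\{i\in\omega\mid P_{i}(0)\downarrow\}$, this says precisely that $Q$ computes the characteristic function of $h$, so $h\in COMP_{\alpha}$, and therefore $h\in RECOG_{\alpha}$ by the inclusion $COMP_{\alpha}\subseteq RECOG_{\alpha}$ established earlier (the bitwise comparison argument, valid since $\alpha\geq\omega$).

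It remains to verify that $h\notin wRECOG_{\omega_{1}}$. By Lemma \ref{unresettingomega1} we have $wRECOG_{\omega_{1}}=wCOMP_{\omega_{1}}$, so it suffices to show $h$ is not computable by any parameter-free $ORM$. This is a routine diagonal argument: if some $P_{e}$ computed the characteristic function of $h$, then via the usual recursion theorem for $ORM$s one obtains an index $e^{\prime}$ of a program that first evaluates $P_{e}(e^{\prime})$ and then halts iff the result is $0$, yielding the contradiction $e^{\prime}\in h\Leftrightarrow P_{e^{\prime}}(0)\downarrow\Leftrightarrow e^{\prime}\notin h$. Hence $h\notin wCOMP_{\omega_{1}}=wRECOG_{\omega_{1}}$.

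Taking $x:=h$ and $\alpha$ as provided by the previous theorem completes the argument. The real content sits in that earlier theorem, which packaged the halting problem for unresetting $\omega_{1}$-machines into a computation by a resetting $\alpha$-machine for some countable $\alpha$; the present statement then merely observes that combining this with the absence of lost melodies at the $\omega_{1}$ level (Lemma \ref{unresettingomega1}) already yields the desired recognizability gap. I do not foresee any serious obstacle beyond assembling these two ingredients via the halting number.
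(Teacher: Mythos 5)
Your argument is correct, but it takes a genuinely different route from the paper's. You take the witness to be the halting number $h$ for parameter-free $ORM$s: the preceding theorem (that some resetting $\alpha$-machine solves the parameter-free $ORM$ halting problem for a suitable $\alpha<\omega_1$) gives $h\in COMP_{\alpha}\subseteq RECOG_{\alpha}$, and the failure of $h\in wRECOG_{\omega_1}$ reduces via Lemma \ref{unresettingomega1} to the non-$ORM$-computability of $h$, which is the standard diagonalization (your use of the recursion theorem is fine; $s$-$m$-$n$ alone would also suffice, since $h$ is defined via input $0$ rather than via self-application). The paper instead exhibits a structural witness: it lets $\tau$ be the supremum of $L$-stages containing new $ORM$-recognizables, picks an index $\alpha+1>\tau$ with $L_{\alpha}\models ZF^{-}$, and takes $r:=cc(L_{\alpha})$; recognizability of $r$ by a resetting $\alpha$-machine is then shown directly by the lost-melody machinery for $ITRM$s together with Corollary \ref{identifymachinetype}, and $r\notin wRECOG_{\omega_1}$ because $r$ first appears in $L$ above $\tau$. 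Your approach is shorter and outsources all the work to the halting-problem theorem, avoiding the coding and fine-structural apparatus entirely; the paper's approach produces a witness located at a specific place in the $L$-hierarchy and rehearses exactly the technique that is then reused for Theorem \ref{alpharesettinglomes}, where lost melodies are produced for every infinite $\alpha<\omega_1$. Both yield the stated existence claim.
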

\begin{proof}
 Let $\tau$ be the supremum of stages containing new\\ $ORM$-recognizables. Let $\alpha+1>\tau$ be an index such that $L_{\alpha}\models ZF^{-}$ and let $r:=cc(L_{\alpha})$ be the $<_{L}$-minimal real coding $L_{\alpha}$. It is well known
that this implies $cc(L_{\alpha})\in L_{\alpha+2}$ (see e.g. \cite{BoPu}). 
Then $r$ is recognizable by a resetting $\alpha$-machine. To see this, first note that the property of being the minimal code of an index $L$-stage can be checked by an $ITRM$ using
the strategy described in the proof of the lost melody theorem for $ITRM$s in \cite{ITRM}. We saw above that resetting $\alpha$-machines can simulate $ITRM$s for all $\alpha\geq\omega$, hence
this can be carried out by a resetting $\alpha+1$-machine. It only remains to test whether the coded stage $L_{\zeta}$ is indeed $L_{\alpha}$. This can be done by using Corollary \ref{identifymachinetype} to
test whether the order type of $On\cap L_{\zeta}$ is $\alpha$.
\end{proof}

\begin{thm}{\label{alphahaltingtimes}}
 Let $\alpha<\omega_1$, and let $\delta>\alpha$ be such that $\delta$ is a limit of indices, but not itself an index. Then any $\alpha$-machine computation (with empty input and oracle) either halts in less than $\delta$ many steps
or does not halt at all.
\end{thm}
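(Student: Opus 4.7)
The plan is to argue by contradiction, combining a direct coding of the partial computation with the fact that $\delta$ sits in an ``index-gap'' of the constructible hierarchy.

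Suppose for contradiction that $P$ is an $\alpha$-register program which, on empty input and oracle, halts at some time $\theta\geq\delta$. The full computation sequence $c=(Z(\iota),R_{1}(\iota),\ldots,R_{k}(\iota))_{\iota\leq\theta}$ is $\Delta_{1}$-definable by transfinite recursion over $L$ from the finite parameter $(\alpha,P)$: successor stages apply the instruction at the active line, and limit stages set each register's content to its $\liminf$ (resetting to $0$ when this equals $\alpha$). Since $\alpha<\omega_{1}$, the state space is countable and hence $\theta$ is countable; and since $\alpha<\delta$, we have $\alpha,P\in L_{\delta}$. In particular, the partial computation $c|_{\delta}$ is $\Delta_{1}$-definable over $L_{\delta}$, so $c|_{\delta}\in L_{\delta+1}$.

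Next, I would code $c|_{\delta}$ as a subset of $\omega$. Because $\delta$ is a limit of indices below $\omega_{1}$, it has countable cofinality, and a cofinal sequence $(\delta_{n})_{n\in\omega}$ of indices below $\delta$ is $\Sigma_{1}$-definable over $L_{\delta}$. Using the $<_{L}$-least real appearing at each $L_{\delta_{n}+1}$ to transfer the ordinal intervals $[\delta_{n},\delta_{n+1})$ onto disjoint copies of $\omega$, we assemble a bijection $f\colon\omega\to\delta$ which is a real $\Sigma_{1}$-definable over $L_{\delta}$, hence $f\in L_{\delta+1}$. But since $\delta$ is not an index, $\mathcal{P}(\omega)\cap L_{\delta+1}=\mathcal{P}(\omega)\cap L_{\delta}$, so $f\in L_{\delta}$. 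The same reasoning, applied to the real $r$ obtained from $c|_{\delta}$ via $f$ and a canonical coding of finite tuples from $\alpha$, yields $r\in L_{\delta+1}$, and again $r\in L_{\delta}$.

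With $r,f\in L_{\delta}$, the partial computation $c|_{\delta}$ can be reconstructed as an element of $L_{\delta}$, its graph being obtained by primitive recursive operations from $r$ and $f$. But then $\delta=\text{dom}(c|_{\delta})\in L_{\delta}$, contradicting $\delta\notin L_{\delta}$. This contradiction shows no halting time $\theta\geq\delta$ is possible. The main obstacle is verifying that both $f$ and $r$ are genuinely reals $\Sigma_{1}$-definable over $L_{\delta}$, so that the ``not an index'' hypothesis really forces them into $L_{\delta}$; this amounts to making the $\Delta_{1}$-recursion defining the computation and the coding of countable ordinals via the cofinal sequence of indices completely explicit within $L_{\delta}$.
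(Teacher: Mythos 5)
Your argument breaks down at the construction of the bijection $f\colon\omega\to\delta$, and the breakdown is not a technicality but the heart of the matter. You assert that, since $\delta$ is a limit of indices below $\omega_1$, a cofinal $\omega$-sequence of indices below $\delta$ is $\Sigma_1$-definable over $L_\delta$. The opposite is true, and is precisely the key lemma in the paper's proof: because $\delta$ is a limit of indices but not itself an index, $L_{\delta}\models ZF^{-}$ together with ``every set is countable'', so the ultimate projectum of $L_{\delta}$ is $\delta$ itself, and consequently \emph{no} map from any $\xi<\delta$ with range unbounded in $\delta$ is definable over $L_{\delta}$. (If a surjection $g\colon\omega\to\delta$ were definable over $L_{\delta}$, the real $\{p(m,n)\mid g(m)\in g(n)\}$ would lie in $L_{\delta+1}\setminus L_{\delta}$ --- it cannot lie in $L_{\delta}$, since the admissible set $L_{\delta}$ computes the order types of its well-ordered reals --- and $\delta$ would be an index after all.) So neither your $f$ nor your real $r$ exists, and the ``not an index'' hypothesis has nothing to push into $L_{\delta}$; note also that this hypothesis only identifies $\mathcal{P}(\omega)\cap L_{\delta+1}$ with $\mathcal{P}(\omega)\cap L_{\delta}$, and does not move arbitrary elements of $L_{\delta+1}$ such as $c|_{\delta}$ down into $L_{\delta}$.

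A second warning sign: your contradiction never uses the assumption that $P$ halts at time $\theta\geq\delta$. The restriction $c|_{\delta}$ of the computation to its first $\delta$ steps exists for every program, halting or not, so if your argument were sound it would show that no $\alpha$-machine computation reaches time $\delta$ at all --- refuted by a trivial infinite loop. The actual proof has to engage with the limit rule: using the non-definability of cofinal maps just described, one shows that if the computation has not halted by time $\delta$, then beyond some $\gamma<\delta$ every occurring machine state recurs cofinally in $\delta$ (otherwise the map sending a state to the supremum of the times at which it occurs would be definable over $L_{\delta}$ with range unbounded in $\delta$); the coordinatewise ordering $\leq_{s}$ on states is well-founded, any two cofinally recurring states have a common $\leq_{s}$-lower bound among such states (take the state at the supremum of an interleaving sequence, which is bounded below $\delta$), hence there is a unique $\leq_{s}$-minimal recurring state $Z$; the liminf rule then puts the machine in state $Z$ at time $\delta$ and the computation cycles, so it never halts.
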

\begin{proof}
 This is an adaption of the argument given in \cite{KoMi} for $ITRM$s.
As $\delta$ is a limit of indices, but not an index, it follows (see e.g. \cite{Ch}, \cite{MaSr} or \cite{LePu}) that $L_{\delta}\models ZF^{-}+\forall{x}\exists{f}(f:\omega\rightarrow_{surj}x)$
 and hence that (see \cite{Je}) $\rho_{\omega}^{\delta}=\delta$, where $\rho_{\omega}^{\alpha}$ denotes the ultimate projectum of $L_{\alpha}$.
We claim that there is no $f:\xi\rightarrow\delta$ with unbounded range and $\xi<\delta$ definable over $L_{\delta}$. To see this, assume that there is such an $f$. By assumption,
there is, for every $\beta<\delta$ an index between $\beta$ and $\delta$ and hence $L_{\delta}$ contains a $<_{L}$-minimal bijection $g_{\beta}$ between $\omega$ and $\beta$.
Define a map $\bar{f}:\xi\times\omega\rightarrow_{surj}\delta$ via $\bar{f}(\iota,j)=g_{f(\iota)}(j)$. Let $h$ be a bijection between $\xi\times\omega$ and $\xi\omega$ and define 
$\tilde{f}:\xi\omega\rightarrow_{surj}\delta$ by $\tilde{f}:=\hat{f}\circ h^{-1}$. As $\xi<\delta$ and $L_{\delta}\models ZF^{-}$, we also have $\xi\omega<\delta$, and $\tilde{f}$ is
certainly definable over $L_{\delta}$. Hence a surjection from some $\zeta<\delta$ onto $\delta$ (and hence onto $L_{\delta}$ is definable over $L_{\delta}$, so that
$\rho_{\omega}^{\delta}<\delta=\rho_{\omega}^{\delta}$, a contradiction.

Now, there is a natural injection from the states of an $\alpha$-machine into $\alpha^{\omega}$, as the state can be given by a finite tuple of ordinals $<\alpha$ representing the register contents and
a single natural number representing the active program line. Such a map $j$ is definable over $L_{\alpha^{\omega}}$ and hence certainly an element of $L_{\delta}$.

Now let $P$ be an $\alpha$-program, and let $C$ be the computation of $P$, restricted to the first $\delta$ many steps. For a machine state $s$, let $\gamma_{s}$ denote $\text{sup}\{\beta<\delta|C(\beta)=s\}$.

Assume first that $\{\beta<\delta|\gamma_{C(\beta)}<\delta\}$ is cofinal in $\delta$, i.e. there are cofinally many states that appear only on boundedly many times. Then we can define, over $L_{\delta}$,
a partial map $f:\alpha\omega\rightarrow\delta$ by letting $f(\xi)=\gamma_{j^{-1}(\xi)}$ if $j^{-1}(\xi)$ is defined and $\gamma_{f^{-1}(\xi)}<\delta$ and otherwise $f(\xi)=0$. By assumption,
$f$ has unbounded range in $\delta$, which contradicts our observation above.

Hence, we may assume that there is some $\gamma<\delta$ such that every machine state assumed after time $\gamma$ appears at cofinally in $\delta$ many times. Suppose that $P$ uses
$n\in\omega$ many registers. The possible machine states are hence elements of $\times_{i=1}^{n}\alpha\times\omega$. Let us partially order the set $S$ of machine states occuring
in the computation after time $\gamma$ by letting
$(\beta_{1},...,\beta_{n},k)\leq_{s}(\gamma_{1},...,\gamma_{n},l)$ iff $k\leq l$ and $\beta_{i}\leq\gamma_{i}$ for all $i\in\{1,...,n\}$. It is easy to see that $\leq_{s}$ is well-founded.

For each two states $Z_{1},Z_{2}\in S$, there is $Z_{3}\in S$ such that $Z_{1}\leq_{s}Z_{3}$ and $Z_{1}\leq_{s}Z_{3}$: To see this, observe that we can define over $L_{\delta}$ a strictly increasing
map $\sigma:\omega\rightarrow\delta$ such that $C(\sigma(2i))=Z_{1}$ and $C(\sigma(2i+1))=Z_{2}$ for all $i\in\omega$. By our observation above, $\text{rng}(\sigma)$ must be bounded in $\delta$, so let
$\bar{\delta}:=\text{sup rng}(\sigma)$. Then $C(\bar{\delta})$ is as desired.

Now, by well-foundedness of $\leq_{s}$, $S$ must contain a minimal element $Z$. It is easy to see that $Z$ is in fact unique: For if $Z_{1}$ and $Z_{2}$ were two distinct minimal elements of $S$,
then by our last observation, we would have $Z_{3}\in S$ with $Z_{3}\leq Z_{1}$ and $Z_{3}\leq Z_{2}$. As $Z_{1}\neq Z_{2}$, one of the inequalities would have to be strict, contradicting the minimality
of $Z_{1}$ and $Z_{2}$.

Hence $Z$ is assume cofinally in $\delta$ many times, while all other states occuring after time $\gamma$ are $\geq_{s}Z$. Consequently, the machine state at time $\delta$ is again $Z$ and it is easy
to see that the computation cycles. Hence, a resetting $\alpha$-machine computation either halts before time $\delta$ or does not halt at all.

\end{proof}

\begin{corollary}{\label{alphacomputability}}
$COMP_{\alpha}\subseteq L_{\delta}$, where $\delta$ is the minimal limit of indices above $\alpha$ which is not itself an index.
\end{corollary}
\begin{proof}
Since $\delta$ is not an index, every subset of $\omega$ definable over $L_{\delta}$ is an element of $L_{\delta}$. Now let $x\in COMP_{\alpha}$, and let $P$ be a resetting $\alpha$-program that computes $x$,
i.e. $P(i)\downarrow=1$ if $i\in x$ and $P(i)\downarrow=0$ if $i\notin x$ for all $i\in\omega$. By Theorem \ref{alphahaltingtimes} and as $P(i)\downarrow$ for all $i\in\omega$, the halting time of $P(i)$ must
be smaller than $\delta$ for all $i\in\omega$. Hence $i\in x$ is expressed over $L_{\delta}$ by an $\in$-formula stating the existence of a halting $P$-computation with input $i$ and output $1$. Consequently,
we must have $x\in L_{\delta}$.
\end{proof}

This allows us to show that there are lost melodies for resetting $\alpha$-machines for all infinite $\alpha<\omega_1$:

\begin{thm}{\label{alpharesettinglomes}}
 Let $\alpha<\omega_1$ be infinite. Then there $COMP_{\alpha}\neq RECOG_{\alpha}$, i.e. there is a lost melody for resetting $\alpha$-machines.
\end{thm}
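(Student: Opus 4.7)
The plan is to produce, for each infinite $\alpha<\omega_{1}$, a real $r\in RECOG_{\alpha}\setminus L_{\delta}$, where $\delta=\delta(\alpha)$ is the ordinal of Corollary \ref{alphacomputability} (the minimal limit of indices above $\alpha$ that is not itself an index). Since $COMP_{\alpha}\subseteq L_{\delta}$, any such $r$ is automatically a lost melody. The construction is a direct generalization of the argument of Theorem \ref{resettingalpharecogstrength}.

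For the construction, choose $\beta=\beta(\alpha)<\omega_{1}$ to be the $\in$-smallest index such that $L_{\beta}\models ZF^{-}$, $\delta\in L_{\beta}$, and (to have enough room for the non-computability argument) $\beta>\delta+\omega$; such a $\beta$ exists because $\{\gamma<\omega_{1}\mid L_{\gamma}\models ZF^{-}\}$ is cofinal in $\omega_{1}$. Put $r:=cc(L_{\beta})\in L_{\beta+2}$, the $<_{L}$-least real coding $L_{\beta}$. If $r\in L_{\delta}$, then the Mostowski collapse of the well-founded extensional relation on $\omega$ coded by $r$ -- which is exactly $L_{\beta}$ -- would be $\Sigma_{1}$-definable from $r$ and hence lie in $L_{\delta+\omega}$; but $L_{\beta}$ has rank $\beta>\delta+\omega$, a contradiction. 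Thus $r\notin L_{\delta}$, and by Corollary \ref{alphacomputability}, $r\notin COMP_{\alpha}$.

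To see $r\in RECOG_{\alpha}$, I describe a resetting $\alpha$-program recognizing it. Given an oracle $y$, first use the simulation of $ITRM$s by resetting $\alpha$-machines (cf.\ the proof of Theorem \ref{resettingalpharecogstrength}) together with the $ITRM$-recognizer used in the lost-melody theorem of \cite{ITRM} to verify that $y$ is the $<_{L}$-minimal code of an $\in$-minimal $L$-stage $L_{\zeta}\models ZF^{-}$ satisfying an additional $\Sigma_{1}$-property $\phi$ that uniquely pins down $\zeta=\beta$. Take $\phi$ to be "$L_{\zeta}$ contains the ordinal $\delta(\alpha)$". To check $\phi$ internally, first locate $\alpha$ inside the coded $L_{\zeta}$ by iterating the program of Corollary \ref{identifymachinetype} over the ordinals of $L_{\zeta}$ (decoded from $y$) and accepting the unique one on which it outputs $1$; then carry out the definition of $\delta$ from $\alpha$ by an internal $\Sigma_{1}$-search, using that the notions "index" and "limit of indices" are $\Sigma_{1}$ over $L$-stages modelling $ZF^{-}$. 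Accepting $y$ precisely when it passes all these tests, the unique accepted oracle is $r$.

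The main obstacle is the absoluteness bookkeeping required for the recognition step: one must verify that the internal ordinal isolated via Corollary \ref{identifymachinetype} really is (the internal copy of) $\alpha$, and that the internal $\Sigma_{1}$-computation of $\delta(\alpha)$ inside $L_{\zeta}$ produces the same value as the external one. Both follow from $\Sigma_{1}$-absoluteness between $L_{\zeta}$ and $V$ for sufficiently closed $\zeta$ (in particular, for $L_{\zeta}\models ZF^{-}$), together with the correctness of Corollary \ref{identifymachinetype} on oracle codes of countable ordinals. Once these are in place, the uniqueness of the accepted oracle is immediate from the minimality clauses built into $\phi$, and the construction faithfully mirrors that of Theorem \ref{resettingalpharecogstrength}.
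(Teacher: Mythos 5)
Your overall strategy is the same as the paper's: take the $<_{L}$-minimal code $r$ of a canonical $L$-level lying above the bound $\delta(\alpha)$ of Corollary \ref{alphacomputability}, conclude $r\notin COMP_{\alpha}$, and recognize $r$ with the $ITRM$ lost-melody machinery together with Corollary \ref{identifymachinetype} to pin down $\alpha$ inside the coded model. (The paper takes $L_{\gamma}$ with $\gamma$ the least limit of indices above $\alpha$ that is not an index, with $\gamma+1$ an index; you take the least $L_{\beta}\models ZF^{-}$ containing $\delta$ — an inessential difference.) However, two steps need repair.

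First, your non-computability argument is wrong as stated: if $r\in L_{\delta}$, the Mostowski collapse of the relation coded by $r$ is the set $L_{\beta}$, which has rank $\beta$ and therefore cannot lie in $L_{\delta+\omega}$ under any circumstances — ``$\Sigma_{1}$-definable from $r$'' does not place the collapse at level $\delta+\omega$, since the recursion computing it runs for $\beta$ many stages. The correct (and simpler) argument is that a real coding $L_{\beta}$ is never an element of $L_{\beta}$ (from such a code one defines, by separation in $L_{\beta}\models ZF^{-}$, a diagonal real not in $L_{\beta}$), and $L_{\delta}\subseteq L_{\beta}$ since $\delta<\beta$; the extra hypothesis $\beta>\delta+\omega$ is then unnecessary. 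Second, you assert $cc(L_{\beta})\in L_{\beta+2}$ without justification; this requires $\beta+1$ to be an index, and your recognition procedure depends on it (the $<_{L}$-minimality test works by computing the truth predicate of $L_{\zeta+2}$ and searching there for the minimal code, which only succeeds if the minimal code actually lives in $L_{\zeta+2}$). The paper closes exactly this gap in the final paragraph of its proof by an elementary-hull/condensation argument showing that $\gamma+1$ is an index; you need the analogous argument for your $\beta$, which in turn requires checking that $\delta$ is coded by a real in (a low level above) $L_{\beta}$. Finally, a smaller point: your internal searches (locating $\alpha$ among the ordinals of $L_{\zeta}$ via Corollary \ref{identifymachinetype}, and computing $\delta(\alpha)$ inside $L_{\zeta}$) must be arranged to halt with output $0$ on oracles where they fail — e.g.\ when no internal ordinal passes the test — which is doable by evaluating the relevant first-order statements via the truth predicate, but should be said.
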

\begin{proof}
Given $\alpha<\omega_1$, let $r_{\alpha}$ be the $<_{L}$-minimal real coding an $L$-level $L_{\gamma}$ such that $\gamma$ is a limit of indices but not itself an index, $\gamma+1$ is an index and $L_{\gamma}$ contains
a real coding $\alpha$. Then we must also have $r_{\alpha}\notin COMP_{\alpha}$
by Corollary \ref{alphacomputability}. We show that $r_{\alpha}\in RECOG_{\alpha}$ by an argument similar to the proof of the lost melody theorem for $ITRM$s. Let $x$ be given in the oracle. 
First, we can - even with an $ITRM$ - check whether $x$ codes an $L$-level $L_{\zeta}$ with cofinally many indices. If not, $x\neq r_{\alpha}$. If so, the methods developed in the proof of the lost melody theorem for $ITRM$s
allow us to compute from $x$ the truth predicate for $L_{\zeta+2}$, which allows us to check whether $\zeta$ and $\zeta+1$ are indices. If $\zeta$ is an index or $\zeta+1$ is not, then $x\neq r_{\alpha}$. Otherwise,
we need to check whether $\zeta>\alpha$ (this suffices to guarantee the existence of a real coding $\alpha$, since at this point we already know that $\zeta$ is a limit of indices). This can be done as follows: Inside $r_{\alpha}$,
$\alpha$ must be coded by some natural number $i$ that can be given to our program in advance. So we test whether $i$ codes an ordinal $\theta$ in $x$. If not, then $x\neq r_{\alpha}$. 
Now, we can easily compute from $i$ and $x$ a real $y$ coding the order type $\theta$ (just delete every $p(k,j)\in x$ with $\{p(k,i),p(j,i)\}\not\subseteq x$) and then use Corollary \ref{identifymachinetype} to check whether $y$ codes $\alpha$. If not, then $x\neq r_{\alpha}$.
Otherwise, we know that $i$ codes $\alpha$ inside $r_{\alpha}$.

Next, we check whether there is any $\alpha<\zeta^{\prime}<\zeta$ with the same properties. If yes, then $x\neq r_{\alpha}$. Otherwise, we know that $x$ codes $L_{\gamma}$ and it remains to check the $<_{L}$-minimality of $x$.
 As $L_{\zeta+1}$ is an index, we know that the minimal real coding $L_{\zeta}$ must be an element of $L_{\zeta+2}$. As we just mentioned, we can, given $x$, evaluate the truth predicate for $L_{\zeta+2}$. Hence, we can 
search through (the code of) $L_{\zeta+2}$ until we find the $<_{L}$-minimal real coding $L_{\zeta}$ and compare it with $x$. If these reals disagree, then $x\neq r_{\alpha}$, otherwise $x=r_{\alpha}$.
So $r_{\alpha}$ is recognizable.

This proves that $r_{\alpha}$ is a lost melody for resetting $\alpha$-machines.

It remains to see that such an $L$-level $L_{\gamma}$ exists. To see this, let $\gamma>\alpha$ be a a minimal limit of indices, and let $\alpha<\delta<\gamma$ be an index. Let $x$ be a real
such that $x\in L_{\delta+1}-L_{\delta}$. Then the elementary hull $H$ of $\{x\}$ in $L_{\gamma}$ is (isomorphic to) an $L$-level $L_{\beta}$ with cofinally many indices which contains $x$, where $\beta\leq\gamma$.
It follows that $\beta=\gamma$ and that in fact $H=L_{\gamma}$. This hull is definable over $L_{\gamma+1}$, so that we get a bijection between $\omega$ and $L_{\gamma}$ in $L_{\gamma+2}$ by the standard finestructural arguments.
Hence $\gamma+1$ is indeed an index, so $\gamma$ is as desired.
\end{proof}

\begin{rem}
 Note that, as parameter-free computations have countable length, $wCOMP_{\omega_{1}}$ corresponds to parameter-free $ORM$-computability. Moreover, by Shoenfield absoluteness,
we have $wRECOG_{\omega_{1}}\subseteq wRECOG_{\omega_{1}}\subseteq\mathcal{P}^{L}(\omega)$. Consequently, if $P$ is an $ORM$-program recognizing $x\subseteq\omega$, then $P$, now interpreted as a program
for an unresetting $\omega_{1}$-ITRM, recognizes $x$ as well: The computations will only take countable many steps and hence no limit of register contents can exceed $\omega_{1}$, prompting an overflow.
Hence $wRECOG_{\omega_{1}}$ coincides with the set of $ORM$-recognizable reals.
The same holds for every $\alpha\geq\omega_{1}$.
As $ORM$-computability and $ORM$-computability coincide, there are no lost melodies for unresetting $\alpha$-ITRMs with $\alpha\geq\omega_{1}$.
\end{rem}



\section{Conclusion and further work}

We have seen that lost melodies exist for a resetting $\alpha$-machines iff $\alpha<\omega_1$ is infinite and that for unresetting $\alpha$-machines, lost melodies do not exist for $\alpha=\omega$,
do exist for $\alpha=\omega+1$ and cease to exist from some countable ordinal on. In the special case of resetting $\omega$-machines or $ITRM$s, the recognizable allow for a detailed analysis among the
constructible reals and show several surprising regularities. In the parameter-$OTM$-case, we reach the limits of $ZFC$.
In general, the relation between the computability and recognizability strength of transfinite models of computation seems to be far from trivial.

In this paper, we have restricted our attention to reals, as these can be dealt with by all models in question and can hence be used as a basis
for comparison. One could equally well consider subsets of other ordinals, which might be more appropriate for some models.

Once we do this, interesting questions arise, even for classical Turing machines: Consider, for example, Turing programs using at most $n$ states and
symbols for some $n\in\mathbb{N}$. Let us say that a natural number $k$ is $n$-computable iff there is such a Turing program that outputs $k$ when run 
on the empty input, and let us say that $k$ is $n$-recognizable iff there is such a Turing program that stops with output $1$ on the input $k$
and with output $0$ on all other integers. Are there infinitely many $n\in\mathbb{N}$ for which there exists $l\in\mathbb{N}$ which is $n$-recognizable, but
not $n$-computable? This provides a kind of a miniaturization of the question for the existence of lost melodies.

Another topic one might pursue is to consider the various generalizations of Turing machines ($ITTM$s, $\alpha$-Turing machines, $\alpha$-$\beta$-Turing machines).

\section{Acknowledgments}

We are indebted to Philipp Schlicht for sketching a proof of Lemma \ref{relbar}, a crucial hint for the proof of Theorem \ref{wsigma1}
and suggesting several very helpful references, in particular concerning Theorem \ref{0sharprecog}. We also thank the anonymous referee for several corrections and suggestions that helped
to considerably improve the paper.

\end{document}